\pdfoutput=1
\RequirePackage{ifpdf}
\ifpdf 
\documentclass[pdftex]{sigma}
\else
\documentclass{sigma}
\fi

\newtheorem{Theorem}{Theorem}[section]

\newtheorem{Lemma}[Theorem]{Lemma}
\newtheorem{Proposition}[Theorem]{Proposition}
\newtheorem{Conjecture}[Theorem]{Conjecture}
 { \theoremstyle{definition}
\newtheorem{Definition}[Theorem]{Definition}

\newtheorem{Example}[Theorem]{Example}
\newtheorem{Remark}[Theorem]{Remark}
\newtheorem{Question}[Theorem]{Question}}

\newcommand{\bH}{{\mathbb{H}}}

\newcommand{\C}{{\mathbb{C}}}
\newcommand{\R}{{\mathbb{R}}}
\newcommand{\Z}{{\mathbb{Z}}}
\newcommand{\K}{{\mathbb{K}}}

\newcommand{\bP}{{\mathbb{P}}}
\newcommand{\bS}{{\mathbb{S}}}

\newcommand{\CP}{{\mathbb{CP}}}
\newcommand{\RP}{{\mathbb{RP}}}
\newcommand{\KP}{{\mathbb{KP}}}

\newcommand{\Oo}{{\mathcal O}}
\newcommand{\Xx}{{\mathcal X}}
\newcommand{\Dd}{{\mathcal D}}
\newcommand{\Pp}{{\mathcal P}}
\newcommand{\Uu}{{\mathcal U}}
\newcommand{\Tt}{{\mathcal T}}

\newcommand{\sG}{{\mathsf{G}}}
\newcommand{\sH}{{\mathsf{H}}}
\newcommand{\sP}{{\mathsf{P}}}
\newcommand{\sB}{{\mathsf{B}}}
\newcommand{\sPGL}{{\mathsf{PGL}}}
\newcommand{\sGL}{{\mathsf{GL}}}
\newcommand{\sSL}{{\mathsf{SL}}}
\newcommand{\sPSL}{{\mathsf{PSL}}}
\newcommand{\sPO}{{\mathsf{PO}}}
\newcommand{\sO}{{\mathsf{O}}}
\newcommand{\sSO}{{\mathsf{SO}}}
\newcommand{\sPSp}{{\mathsf{PSp}}}
\newcommand{\sSp}{{\mathsf{Sp}}}
\newcommand{\sU}{{\mathsf{U}}}

\newcommand{\fg}{{\mathfrak g}}
\newcommand{\defin}[1]{\emph{#1}}
\newcommand{\ra}{\rightarrow}
\newcommand{\lra}{{\longrightarrow}}

\newcommand{\Hol}{\mathrm{Hol}}
\newcommand{\Id}{\mathrm{Id}}

\DeclareMathOperator{\Hom}{Hom}
\DeclareMathOperator{\Aut}{Aut}
\DeclareMathOperator{\Aff}{Aff}
\DeclareMathOperator{\End}{End}
\DeclareMathOperator{\Iso}{Iso}

\DeclareMathOperator{\tr}{tr}
\DeclareMathOperator{\Span}{Span}
\DeclareMathOperator{\Symm}{Symm}

\DeclareMathOperator{\Stab}{Stab}
\DeclareMathOperator{\Vol}{Vol}

\DeclareMathOperator{\Real}{Re}

\DeclareMathOperator{\Equiv}{Equiv}
\DeclareMathOperator{\Diffeo}{Diffeo}
\DeclareMathOperator{\Dev}{Dev}
\DeclareMathOperator{\Flat}{Flat}
\DeclareMathOperator{\Isom}{Isom}
\DeclareMathOperator{\Gr}{Gr}
\DeclareMathOperator{\Hit}{Hit}
\DeclareMathOperator{\QHit}{QHit}

\DeclareMathOperator{\QFuch}{QFuch}
\DeclareMathOperator{\Anosov}{Anosov}

\begin{document}

\allowdisplaybreaks

\newcommand{\arXivNumber}{1809.07290}

\renewcommand{\thefootnote}{}

\renewcommand{\PaperNumber}{039}

\FirstPageHeading

\ShortArticleName{Higgs Bundles and Geometric Structures on Manifolds}

\ArticleName{Higgs Bundles and Geometric Structures on Manifolds\footnote{This paper is a~contribution to the Special Issue on Geometry and Physics of Hitchin Systems. The full collection is available at \href{https://www.emis.de/journals/SIGMA/hitchin-systems.html}{https://www.emis.de/journals/SIGMA/hitchin-systems.html}}}

\Author{Daniele ALESSANDRINI}

\AuthorNameForHeading{D.~Alessandrini}

\Address{Ruprecht-Karls-Universitaet Heidelberg, INF 205, 69120, Heidelberg, Germany}
\Email{\href{mailto:daniele.alessandrini@gmail.com}{daniele.alessandrini@gmail.com}}
\URLaddress{\url{https://www.mathi.uni-heidelberg.de/~alessandrini/}}

\ArticleDates{Received September 28, 2018, in final form April 17, 2019; Published online May 10, 2019}

\Abstract{Geometric structures on manifolds became popular when Thurston used them in his work on the geometrization conjecture. They were studied by many people and they play an important role in higher Teichm\"uller theory. Geometric structures on a manifold are closely related with representations of the fundamental group and with flat bundles. Higgs bundles can be very useful in describing flat bundles explicitly, via solutions of Hitchin's equations. Baraglia has shown in his Ph.D.~Thesis that Higgs bundles can also be used to construct geometric structures in some interesting cases. In this paper, we will explain the main ideas behind this theory and we will survey some recent results in this direction, which are joint work with Qiongling Li.}

\Keywords{geometric structures; Higgs bundles; higher Teichm\"uller theory; Anosov representations}

\Classification{57M50; 53C07; 22E40}

\renewcommand{\thefootnote}{\arabic{footnote}}
\setcounter{footnote}{0}

\section{Introduction}

The theory of geometric structures on manifolds was introduced by Cartan and Ehresmann in the 1920s, following the ideas given by Klein in his Erlangen program. This theory became popular in the 1980s, when Thurston used it in the statement of his Geometrization Conjecture. Since then, many people contributed important results, see for example \cite{BabaGrafting1,BabaGrafting2,BauesTorus,ChoiGoldmanRP2,ChoiGoldmanClassification,GalloKapovichMarden, GoldmanFuchsianHol,GoldmanConvex}.

Nowadays, geometric structures on manifols are also important in higher Teichm\"uller theo\-ry, a research area that arose from the work of Goldman \cite{GoldmanConvex}, Choi--Goldman \cite{ChoiGoldmanRP2}, Hitchin~\cite{liegroupsteichmuller}, Labourie~\cite{AnosovFlowsLabourie}, Fock--Goncharov \cite{FockGoncharov}. They studied some connected components of the character varieties of surface groups in higher rank Lie groups which share many properties with Teichm\"uller spaces. They are now called Hitchin components or higher Teichm\"uller spaces.

The first works which related geometric structures and higher Teichm\"uller theory are Choi--Goldman \cite{ChoiGoldmanRP2} and Guichard--Wienhard \cite{convexfoliatedprojective}, showing how the low-rank Hitchin components can be used as parameter spaces of special geometric structures on closed manifolds. These first results were then generalized by Guichard--Wienhard \cite{GWDomainsofDiscont} and Kapovich--Leeb--Porti \cite{KLPAnosov1}, who show that Anosov representations can often be used to construct geometric structures on closed manifolds.

Higgs bundles are an important tool in higher Teichm\"uller theory because they can be used to describe the topology of the character varieties (see for example Hitchin \cite{selfduality,liegroupsteichmuller}, Alessandrini--Collier \cite{SO23LabourieConj}). Anyway, they were initially believed to give very little information on the geometry of a single representation. This point of view is changing, since we have now many examples where the Higgs bundle can be used to give interesting information on the geometric structures associated with a certain representation of a surface group.

The main purpose of this survey paper is to explain these constructions. The first ones were presented by Baraglia in his Ph.D.~Thesis~\cite{BaragliaThesis}, more recent ones are in Alessandrini--Li \cite{AdSpaper,NilpotentCone,ProjectiveStructuresHB}, and Collier--Tholozan--Toulisse \cite{CollierTholozanToulisse}. The main idea behind these constructions is that a geometric structure corresponds to a section of a flat bundle which is transverse to the parallel foliation. The holomorphic structure of the Higgs bundle helps to construct sections, and the parallel foliation can be described by solving Hitchin's equations.

I will initially describe the fundamental notions of the theory of geometric structures on manifolds: the notion of geometry in the sense of Klein, geometric manifolds, the relationship with the theory of domains of discontinuity for Anosov representations, the relationship with representations of fundamental groups of manifolds, and the deformation spaces of geometric structures on a fixed topological manifold, see Section \ref{sec:geometric structures}.

Then I will give an introduction to character varieties and their relationship with the moduli space of flat bundles. I will also introduce the subspaces of the character varieties that are most important in higher Teichm\"uller theory, see Section~\ref{sec:character varieties}.

After this, everything is ready to explain the relationship between geometric structures and flat bundles, via a tool called the graph of a geometric structure, see Section \ref{sec:graph}.

We will then enter in the main topic of the mini-course, using Higgs bundles and solutions of Hitchin's equations to describe flat bundles explicitly and construct geometric structures. In Section~\ref{sec:higgs bundles} three simple examples are given where this method works and allows us to construct hyperbolic structures, complex projective structures and convex real projective structures on surfaces.

Higgs bundles can only describe flat bundles on surfaces, but we also want to describe flat bundles on higher-dimensional manifolds. See Section \ref{sec:higher dimension} for an explanation on how flat bundles on manifolds of different dimension can be related, and an exposition of interesting open problems in the theory of geometric manifolds that are related with this issue.

We will finally see how geometric structures on higher-dimensional manifolds can be constructed. As a warm-up, in Section \ref{sec:circle bundles} we consider the case of $3$-dimensional manifolds, and we see how to construct the convex foliated real projective structures and the anti-de Sitter structures on circle bundles over surfaces.

In the last part, in Section \ref{sec:higher dimensions}, we will see how the technique works in the case of manifolds of higher dimension. In this final case, the technical details are more involved and will be mainly left out. We will see how to construct real and complex projective structures on higher-dimensional manifolds, and how this result has applications to the theory of domains of discontinuity for Anosov representations.

This survey paper is based on the lecture notes for the mini-course ``Higgs bundles and geometric structures on manifolds'' that I gave at the University of Illinois at Chicago during the program ``Workshop on the Geometry and Physics of Higgs bundles II'', November 11--12, 2017. The mini-course was targeted at graduate students and young post-docs with an interest in Gauge Theory and Higgs bundles and this survey paper addresses the same public.

\section{Geometric structures on manifolds} \label{sec:geometric structures}

This section will be an introduction to the theory of geometric structures on manifolds, their developing maps and holonomy representations. For more details about this theory, see Thurston's book \cite{ThurstonBook} or Goldman's notes \cite{GoldmanGSAVOR}.

\subsection{Geometries}

The theory of geometric structures on manifolds traces its origins back to Felix Klein who, in his Erlangen program (1872) discussed what is geometry. Klein's idea is that geometry is the study of the properties of a space that are invariant under the action of a certain group of symmetries. The main examples he had in mind were the Euclidean geometry, where the space is $\R^n$ and the group is $\Isom\big(\R^n\big)$, and the affine geometry, where the space is $\R^n$ and the group is $\Aff\big(\R^n\big)$. These geometries study exactly the same space, but they focus on very different properties. Euclidean geometry deals with lengths, angles, and circles, the notions that are invariant under the group of isometries. These notions make no sense in affine geometry, because they are not preserved by the affine group. Affine geometry, instead, deals with ratios of lengths, parallelism and ellipses. Klein emphasizes that when studying geometry, the symmetry group is as important as the space. Let's now give a definition in modern terms.

\begin{Definition}A \defin{geometry} is a pair $(X,\sG)$, where $\sG$, the \defin{symmetry group} is a Lie group and~$X$, the \defin{model space}, is a manifold endowed with a transitive and effective action of $\sG$. Recall that an action is \defin{effective} if every $g\in \sG {\setminus} \{e\}$ acts non-trivially on~$X$.

If $U \subset X$ is an open subset, we will say that a map $f\colon U\ra X$ is \defin{locally in $\sG$} if for every connected component $C$ of $U$, there exists $g\in \sG$ such that $f|_C = g|_C$.

For $x\in X$, the \defin{isotropy group} of $x$ in $\sG$ is the subgroup
\begin{gather*}\sH = \Stab_\sG(x) = \{h\in \sG \,|\, h(x) = x\}.\end{gather*}
\end{Definition}

The isotropy group $\sH$ is a closed subgroup of $\sG$. Since the action is transitive, the conjugacy class of the isotropy group does not depend on the choice of the point~$x$.

As an equivalent definition, a geometry can be defined as a pair $(\sG,\sH)$, where $\sG$ is a Lie group and $\sH$ is a closed subgroup of~$\sG$, up to conjugation. The model space can then be reconstructed as the quotient $X=\sG/\sH$. From this description, we see that $X$ inherits from $\sG$ a structure of real analytic manifold such that the action of~$\sG$ on~$X$ is real analytic.

\begin{Example}Classical examples of geometries are the \defin{Euclidean geometry} $\big(\R^n, \Isom\big(\R^n\big)\big)$, the \defin{affine geometry} $\big(\R^n, \Aff\big(\R^n\big)\big)$ and the \defin{real projective geometry} $\big(\RP^n, \sPGL(n+1,\R)\big)$. There are many other examples which we will organize in families.
\begin{enumerate}\itemsep=0pt
\item A geometry is said to be \defin{of Riemannian type} if $\sG$ acts on $X$ preserving a Riemannian metric. This happens if and only if the isotropy group is compact. Examples are the \defin{isotropic geometries}
(the Euclidean geometry $\big(\R^n, \Isom\big(\R^n\big)\big)$, the \defin{hyperbolic geometry} $\big(\bH^n, \sPO(1,n)\big)$ and the \defin{spherical geometry} $\big(\bS^n, \sPO(n+1)\big)$), the geometries of symmetric spaces and the geometries of Lie groups ($(\sG,\sG)$, where~$G$ acts on itself on the left). Thurston's eight $3$-dimensional geometries~\cite{ThurstonBook} are in this family.
\item A geometry is said to be of \defin{of pseudo-Riemannian type} if $\sG$ acts on $X$ preserving a pseudo-Riemannian metric. Examples are many geometries coming from the theory of relativity, such as the geometry of Minkowski space $\big(\R^n, \sO(1,n-1)\ltimes\R^n\big)$, of the anti-de Sitter space $\big({\rm AdS}^n, \sPO(2,n-1)\big)$, and of the de Sitter space $\big({\rm dS}^n, \sPO(1,n)\big)$.
\item A geometry is said to be \defin{of parabolic type} if the isotropy group $\sH$ is a parabolic subgroup of $\sG$. Examples are the real projective geometry $\big(\RP^n, \sPGL(n+1,\R)\big)$, the \defin{complex projective geometry} $\big(\CP^n, \sPGL(n+1,\C)\big)$, the \defin{conformal geometry} $\big(\bS^n, \sPO(1,n+1)\big)$, the geometry of Grassmannians and of Flag manifolds.
\end{enumerate}
\end{Example}

\begin{Remark}The notation $(X,\sG)$ for a geometry is, in most practical cases, too cumbersome, hence we will usually denote the geometry just by~$X$, when this does not result in ambiguities. For example, we will often denote the real projective geometry by $\RP^n$, instead of $\big(\RP^n, \sPGL(n+1,\R)\big)$. Similarly for $\CP^n$, $\bH^n$, ${\rm AdS}^n$, these symbols will denote both the model space and the geometry.
\end{Remark}

\subsection{Geometric manifolds}

Every geometry can be used as a local model for geometric structures on manifolds. This idea was introduced by Cartan and Ehresmann in the 1920s, and it was made popular by Thurston around 1980, when he used it in the statement of his geometrization conjecture (now Perelman's theorem).

\begin{Definition}Given a geometry $(X,G)$ and a manifold $M$ with $\dim(M)=\dim(X)$, an \defin{$(X,G)$-structure} on $M$ is a maximal atlas $\mathcal{U} = \{(U_i,\varphi_i)\}$ where
\begin{enumerate}\itemsep=0pt
\item[1)] $\{U_i\}$ is an open cover of $M$,
\item[2)] the functions
\begin{gather*}\varphi_i\colon \ U_i \ra X\end{gather*}
are homeomorphisms with the image, which is an open subset of $X$,
\item[3)] the transition functions
\begin{gather*}\varphi_i\circ \varphi_j^{-1}\colon \ \varphi_j(U_i \cap U_j) \ra \varphi_i(U_i \cap U_j) \end{gather*}
are locally in $\sG$.
\end{enumerate}
An \defin{$(X,\sG)$-manifold} is a manifold endowed with an $(X,\sG)$-structure.
\end{Definition}

An $(X,\sG)$-manifold is a real analytic manifold, because the transition functions of the atlas are real analytic. Moreover, on an $(X,\sG)$-manifold $M$, all the local properties of $X$ that are preserved by $\sG$ are given to~$M$ by the atlas. For example, if $(X,\sG)$ is of (pseudo-)Riemannian type, every $(X,\sG)$-manifold inherits a (pseudo-)Riemannian metric from~$X$. Similarly, every manifold with a real or complex projective structure has a well defined notion of projective line: some real or complex $1$-dimensional submanifold that is mapped to a projective line by any chart. Moreover, given $4$ points on such a projective line, it is possible to compute their cross-ratio.

\begin{Example} \label{exa:geometric manifolds}\quad
\begin{enumerate}\itemsep=0pt
\item For every geometry $(X,G)$, take $M=X$. The identity map is a global chart for the tautological $(X,G)$-structure on~$M$. Slightly more generally, if $M \subset X$ is an open subset, again the identity map is a global chart for an $(X,G)$-structure on~$M$.
\item Consider the Euclidean geometry: $\big(\R^n, \Isom\big(\R^n\big)\big)$. Let $M$ be the torus
\begin{gather*}M = T^n = \R^n/\Z^n.\end{gather*}
We can construct an atlas using the covering $\R^n\ra M$: every well covered open set is one of the $U_i$s, and every section of the covering over such a $U_i$ is one of the $\varphi_i$s.
\item Consider the hyperbolic geometry: $\big(\bH^2, \sPO(1,2)\big)$. Let $M$ be a closed surface of genus $g\geq 2$. Recall that such a surface can be obtained by gluing the sides of a $(4g)$-gon along the standard pattern $a,b,a^{-1},b^{-1}, c,d, c^{-1},d^{-1}, \dots$, obtaining a cell complex with $1$ vertex, $2g$ edges and $1$ face. To put an $\bH^2$-structure on~$M$, we first need to construct a regular $(4g)$-gon in $\bH^2$ such that the sum of the internal angles of the polygon is $\frac{2\pi}{4g}$. When the edges of the polygon are glued with the standard pattern, they give a surface of genus $g$ with an $\bH^2$-structure.
\item Consider the real projective geometry: $\big(\RP^2, \sPGL(3,\R)\big)$. The subgroup $\sPO(1,2) < \sPGL(3,\R)$ acts on $\RP^2$ preserving a disc, this is the \defin{Klein model} of the hyperbolic plane: there is a $\sPO(1,2)$-equivariant map $K\colon \bH^2 \ra \RP^2$ with image this disc. The $\bH^2$-structure on $M$ constructed in point (3) induces an $\RP^2$-structure by composing the charts with the map $K$.
\item Consider the complex projective geometry: $\big(\CP^1, \sPGL(2,\C)\big)$. The subgroup $\sPSL(2,\R) < \sPGL(2,\C)$ acts on $\CP^1$ preserving the upper half plane, this is the \defin{Poincar\'e model} of the hyperbolic plane: the connected component $\sPO_0(2,1)$ of $\sPO(2,1)$ is isomorphic to $\sPSL(2,\R)$ in such a way that there is a $\sPO_0(2,1)$-equivariant map $P\colon \bH^2 \ra \CP^1$ with image the upper half plane. The $\bH^2$-structure on $M$ constructed in point (3) induces a~$\CP^1$-structure by composing the charts with the map $P$.b
\end{enumerate}
\end{Example}

\subsection{Morphisms} \label{subsec:morphisms}

\begin{Definition}Given two $(X,\sG)$-manifolds $M$, $N$, a map $f\colon M\ra N$ is an \defin{$(X,\sG)$-map} if for every $m\in M$, there exist charts $(U,\varphi)$ for $M$ around $m$ and $(V,\psi)$ for $N$ around $f(m)$ such that $f(U) \subset V$ and the composition
\begin{gather*}\psi \circ f \circ \varphi^{-1} \colon \ \varphi(U) \ra \psi(V)\end{gather*}
is locally in $\sG$.
\end{Definition}

The $(X,\sG)$-maps are always real analytic local diffeomorphisms. Composition of $(X,\sG)$-maps is an $(X,\sG)$-map, hence we can form a category having the $(X,\sG)$-manifolds as objects and the $(X,\sG)$-maps as arrows.

\begin{Definition}
An \defin{$(X,\sG)$-isomorphism} is a diffeomorhism which is also an $(X,\sG)$-map. An \defin{$(X,\sG)$-automorphism} is an isomorphism between an $(X,\sG)$-manifold and itself.
\end{Definition}

Notice that the inverse of an $(X,G)$-isomorphism is automatically an $(X,\sG)$-map. If $M$ is an $(X,\sG)$-manifold, we will denote its group of automorphisms by
\begin{gather*}\Aut_{(X,\sG)}(M) = \{f\colon M\ra M \,|\, f \text{ is an } (X,\sG)\text{-isomorphism}\}.\end{gather*}
These groups can sometimes be understood:

\begin{Proposition}
\begin{gather*}\Aut_{(X,\sG)}(X) = \sG.\end{gather*}
More generally, if $U\subset X$ is open, then
\begin{gather*}\Aut_{(X,\sG)}(U) = \{g\in \sG \,|\, g(U) = U\}. \end{gather*}
\end{Proposition}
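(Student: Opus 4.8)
The plan is to prove both statements at once, since the first is the special case $U = X$. One inclusion is immediate: any $g \in \sG$ with $g(U) = U$ restricts to a diffeomorphism $U \to U$, and by the tautological $(X,\sG)$-structure on an open subset of $X$ (Example \ref{exa:geometric manifolds}, part 1, with the identity as global chart), the condition that $g|_U$ be an $(X,\sG)$-map is exactly that $g|_U$ be locally in $\sG$, which holds trivially. For $U = X$ this gives $\sG \subseteq \Aut_{(X,\sG)}(X)$, and note that the map $\sG \to \Aut_{(X,\sG)}(X)$ is injective precisely because the action of $\sG$ on $X$ is effective.

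For the reverse inclusion, let $f \colon U \to U$ be an $(X,\sG)$-automorphism. Using again that $U$ carries the tautological structure with the inclusion $U \hookrightarrow X$ as a single global chart, the definition of $(X,\sG)$-map says: for every $x \in U$ there is an open neighborhood $V_x \subseteq U$ and an element $g_x \in \sG$ with $f|_{V_x} = g_x|_{V_x}$. So $f$ is locally the restriction of elements of $\sG$; I must upgrade this to: $f$ is globally the restriction of a single $g \in \sG$, and moreover $g(U) = U$.

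The key step is a rigidity argument using the real-analyticity built into the geometry: $X$ is a real analytic manifold, the $\sG$-action is real analytic, and $(X,\sG)$-maps are real analytic. Fix a connected component $C$ of $U$ and a point $x_0 \in C$ with associated $g_{x_0} \in \sG$. I claim the set $\{x \in C : g_{x_0} \text{ agrees with } f \text{ on a neighborhood of } x\}$ is nonempty, open, and closed in $C$. Nonempty and open are clear from the local description. For closedness, if $x$ is in the closure, pick the local element $g_x$ with $f = g_x$ near $x$; on the overlap of this neighborhood with the set where $f = g_{x_0}$ — which is nonempty since $x$ is a limit point — the two analytic maps $g_{x_0}$ and $g_x$ agree on an open set, hence the element $g_x^{-1} g_{x_0} \in \sG$ acts trivially on an open subset of $X$; by real-analyticity of the $\sG$-action and connectedness of $X$ this forces $g_x^{-1} g_{x_0} = e$ (here effectiveness, or just analytic continuation, pins it down), so $g_x = g_{x_0}$ and $f = g_{x_0}$ near $x$ as well. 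Thus $f = g_{x_0}$ on all of $C$. Running this over the (at most countably many) components and invoking the same analytic-continuation/effectiveness principle to see that the element is forced to be the \emph{same} $g$ on every component — this is the one genuinely delicate point, and strictly speaking it requires an additional hypothesis such as connectedness of $U$, or one interprets "locally in $\sG$" as allowing distinct elements on distinct components, in which case the statement should be read componentwise — one concludes $f = g|_U$ for some $g \in \sG$. Since $f$ maps $U$ bijectively onto $U$, we get $g(U) = U$, completing the reverse inclusion and hence the proof.

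The main obstacle is the passage from \emph{locally} in $\sG$ to \emph{globally} a single element of $\sG$: this is exactly where analyticity is essential (a merely smooth pointwise-in-$\sG$ map need not be globally in $\sG$), and where one must be careful about connectedness of $U$ so that the locally-constant choice of group element is actually constant.
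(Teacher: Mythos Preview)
The paper states this proposition without proof, so there is nothing to compare against directly. Your argument is correct and is the standard one: the inclusion $\{g\in\sG : g(U)=U\}\subseteq \Aut_{(X,\sG)}(U)$ is immediate from the tautological chart, and the reverse inclusion uses that an $(X,\sG)$-map is real analytic, so the locally-defined group elements must all coincide on a connected component by analytic continuation. Your open-and-closed argument for this is fine; note that to conclude $g_x^{-1}g_{x_0}=e$ you only need that these two analytic maps $X\to X$ agree on an open set and hence on the connected component of $X$ containing $C$, which suffices for $f=g_{x_0}$ on $C$ even without invoking effectiveness at that step.

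You are also right to flag the connectedness issue, and in fact the proposition as literally stated is false for disconnected $U$. A concrete counterexample: take $X=\R$ with $\sG=\R$ acting by translation, and $U=(0,1)\cup(2,3)$. The map $f$ given by $x\mapsto x+2$ on $(0,1)$ and $x\mapsto x-2$ on $(2,3)$ is an $(X,\sG)$-automorphism of $U$ (each piece is a translation, so the transition with the identity chart is locally in $\sG$ in the paper's sense), but no single translation preserves $U$ and restricts to $f$. So the correct reading of the proposition is either ``$U$ connected'' or ``componentwise''; your proof handles the connected case completely, and that is the content one should extract here.
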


\subsection{Kleinian geometric structures}

The following proposition gives a tool that can be used to construct many interesting manifolds carrying geometric structures.

\begin{Proposition} \label{prop:geometry of coverings}
Let $M$ be an $(X,\sG)$-manifold, and $\Gamma < \Aut_{(X,\sG)}(M)$ be a subgroup acting properly discontinuously and freely on $M$. Then $M/\Gamma$ is a manifold, and there exists a unique $(X,\sG)$-structure on $M/\Gamma$ such that the quotient $M \ra M/\Gamma$ is an $(X,\sG)$-map.

Conversely, let $M$ be an $(X,\sG)$-manifold, and let $\pi\colon \bar{M} \ra M$ be a covering map. Then there exists a unique $(X,\sG)$-structure on $\bar{M}$ such that $\pi$ is an $(X,\sG)$-map.
\end{Proposition}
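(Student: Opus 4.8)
The plan is to build the atlas directly in each direction, using the fact that being ``locally in $\sG$'' is a local condition preserved under composition with elements already known to be locally in $\sG$. For the first (quotient) direction, I would start from the $(X,\sG)$-atlas $\mathcal{U}=\{(U_i,\varphi_i)\}$ on $M$. Since $\Gamma$ acts properly discontinuously and freely, the projection $p\colon M\ra M/\Gamma$ is a covering map; I would shrink the $U_i$ so that each $U_i$ maps homeomorphically onto an open set $V_i=p(U_i)$ in $M/\Gamma$, and each $V_i$ is evenly covered. Then I would define charts on $M/\Gamma$ by $\psi_i=\varphi_i\circ (p|_{U_i})^{-1}\colon V_i\ra X$. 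These are homeomorphisms onto open subsets of $X$. The point to check is that the transition maps $\psi_i\circ\psi_j^{-1}$ are locally in $\sG$: on a connected component of $V_i\cap V_j$, the map $(p|_{U_i})^{-1}\circ p|_{U_j}$ agrees with the restriction of some $\gamma\in\Gamma$ (by the deck transformation structure of the covering), and since $\gamma\in\Aut_{(X,\sG)}(M)$, composing $\varphi_i$ with $\gamma$ with $\varphi_j^{-1}$ gives something locally in $\sG$, because $\gamma$ itself is locally an $(X,\sG)$-map in the chart coordinates. So $\psi_i\circ\psi_j^{-1}$ is locally in $\sG$, and $\{(V_i,\psi_i)\}$ generates an $(X,\sG)$-structure on $M/\Gamma$; by construction $p$ reads as the identity in these charts, hence is an $(X,\sG)$-map.

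For the second (covering) direction, I would pull back the atlas: given the $(X,\sG)$-atlas $\{(U_i,\varphi_i)\}$ on $M$, refine so that each $U_i$ is evenly covered by $\pi\colon\bar M\ra M$, write $\pi^{-1}(U_i)=\bigsqcup_\alpha \tilde U_i^\alpha$ with $\pi|_{\tilde U_i^\alpha}$ a homeomorphism onto $U_i$, and set $\tilde\varphi_i^\alpha=\varphi_i\circ\pi|_{\tilde U_i^\alpha}$. The transition maps among these are exactly transition maps of the original atlas (possibly restricted), hence locally in $\sG$, and $\pi$ is an $(X,\sG)$-map in these charts. For uniqueness in both directions I would argue that the quotient/covering map being an $(X,\sG)$-map forces the charts downstairs (resp.\ upstairs) to be compatible with the given ones up to elements locally in $\sG$: any two $(X,\sG)$-structures for which a fixed local diffeomorphism is an $(X,\sG)$-map have the same maximal atlas, since their charts compose with $\pi$ to charts of $M$ and hence differ by maps locally in $\sG$.

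The main obstacle, and the step deserving the most care, is the transition-map computation in the quotient direction: one must identify $(p|_{U_i})^{-1}\circ p|_{U_j}$ on each connected component with a single element $\gamma\in\Gamma$ and then unwind that $\gamma$, being an automorphism of the $(X,\sG)$-manifold $M$, reads in the charts $\varphi_i,\varphi_j$ as a map locally in $\sG$ -- this is where properly discontinuous plus free is used (to get $p$ a covering and the deck group acting as $\Gamma$), and where the definition of $(X,\sG)$-map for $\gamma$ feeds in. The covering direction is then essentially formal by comparison, and uniqueness follows from the observation that the condition ``$\pi$ is an $(X,\sG)$-map'' pins down the maximal atlas completely.
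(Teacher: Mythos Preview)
The paper does not actually supply a proof of this proposition; it is stated as a standard fact and used freely thereafter. Your argument is correct and is exactly the expected one: push the charts down along local sections of the covering $p\colon M\to M/\Gamma$ for the quotient direction, pull them back along local sections of $\pi$ for the covering direction, and in each case verify that the resulting transition maps are locally in $\sG$. The one step worth spelling out a touch more carefully is the passage from ``$\gamma\in\Aut_{(X,\sG)}(M)$'' to ``$\varphi_i\circ\gamma\circ\varphi_j^{-1}$ is locally in $\sG$'': the definition of an $(X,\sG)$-map only guarantees this for \emph{some} pair of charts near each point, and you should note that maximality of the atlas then forces it for \emph{every} pair, since $\varphi_i\circ\gamma\circ\varphi_j^{-1}$ factors as a composition of two transition maps with the guaranteed local-in-$\sG$ map. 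Your uniqueness argument is also fine and could be phrased slightly more sharply: if $\mathcal{U}$ and $\mathcal{U}'$ are two maximal $(X,\sG)$-atlases on $\bar M$ making $\pi$ an $(X,\sG)$-map, then any chart of $\mathcal{U}$ composed with the inverse of any chart of $\mathcal{U}'$ is, locally, a composition of two maps locally in $\sG$ (each coming from compatibility with the downstairs atlas via $\pi$), hence itself locally in $\sG$; so $\mathcal{U}\cup\mathcal{U}'$ is compatible and maximality gives $\mathcal{U}=\mathcal{U}'$.
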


\begin{Definition}Let $\Gamma < \sG$ be a discrete subgroup. A~\defin{domain of discontinuity} for $\Gamma$ is an open subset $\Omega \subset X$ that is $\Gamma$-invariant and such that $\Gamma$ acts properly discontinuously on $\Omega$.
\end{Definition}

By applying Proposition~\ref{prop:geometry of coverings}, if $\Omega$ is a domain of discontinuity for $\Gamma$ and $\Gamma$ acts freely on~$\Omega$ (which is always true if $\Gamma$ is torsion-free), then the quotient $\Omega / \Gamma$ is a manifold with an $(X,\sG)$-structure.

\begin{Definition}The geometric structures of the form $\Omega/\Gamma$ described above are called \defin{Kleinian $(X,\sG)$-structures}.
\end{Definition}

The theory of Anosov representations, introduced by Labourie \cite{AnosovFlowsLabourie} and Guichard--Wien\-hard~\cite{GWDomainsofDiscont}, gives methods for constructing interesting Kleinian geometric structures. We will not give here the complete definition of Anosov representations, we will only recall some of their properties. Let $\sG$ be a semi-simple Lie group and $\Gamma$ be a Gromov-hyperbolic group. Anosov representations $\rho\colon \Gamma \ra \sG$ are defined with reference to a parabolic subgroup $\sP \subset \sG$, they will be called $\sP$-Anosov representations. One property of a~$\sP$-Anosov representation $\rho$ is the existence of a $\rho$-equivariant map
\begin{gather*}\xi\colon \ \partial_\infty \Gamma \ra \sG/\sP\end{gather*}
which must, by definition, satisfy some special properties. Here with $\partial_\infty \Gamma$ we denote the boundary at infinity of $\Gamma$, defined by Gromov \cite{Gromov} for hyperbolic groups. The $\sP$-Anosov representations form an open subset of the character variety (see Section~\ref{sec:character varieties}):
\begin{gather*}\sP\text{-}\Anosov(\pi_1(S),\sG) \subset \Xx(\pi_1(S),\sG).\end{gather*}

When $(X,\sG)$ is a geometry of parabolic type, whose isotropy group might be different from~$\sP$, there is a very rich theory giving sufficient conditions for a~$\sP$-Anosov representation to admit a~domain of discontinuity $\Omega \subset X$, which is, in the best cases, co-compact. The domain $\Omega$ is defined using the map $\xi$. This theory was founded by Guichard--Wienhard~\cite{GWDomainsofDiscont}, and was improved and extended by Kapovich--Leeb--Porti \cite{KLPAnosov1}. For an example of how this works, see Section~\ref{subsec:dod}. In this way, it is possible to construct many examples of Kleinian geometric structures on closed manifolds for geometries of parabolic type.

One limitation of this method is that even if we construct an $(X,\sG)$-manifold $M = \Omega/\rho(\Gamma)$, we have no idea what the topology of $M$ is. Other techniques are needed to get a good understanding of these geometric manifolds, see for example Theorem~\ref{thm:dod}.

\subsection{Developing maps and holonomies}

The Kleinian geometric structures are the easiest to understand, but not all geometric structures are Kleinian. To work with general geometric structures, we introduce here the tools of developing maps and holonomy representations.

\begin{Lemma}
Let $N$ be a simply-connected manifold $(X,\sG)$-manifold. Then $N$ admits a global $(X,\sG)$-map
\begin{gather*}D\colon \ N \ra X\end{gather*}
unique up to post-composition by an element of $\sG$.
\end{Lemma}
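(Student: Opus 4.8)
The plan is to construct $D$ by analytic continuation along paths, mimicking the classical monodromy/developing construction for $(X,\sG)$-structures. First I would fix a basepoint $p \in N$ and a chart $(U_0, \varphi_0)$ around $p$. Given any point $q \in N$ and a path $\gamma$ from $p$ to $q$, I would cover $\gamma$ by finitely many connected chart domains $U_0, U_1, \dots, U_k$ along $\gamma$ (using compactness of $\gamma([0,1])$ and the Lebesgue number lemma), with charts $\varphi_i \colon U_i \ra X$, and consecutive domains overlapping in a connected set containing a point of $\gamma$. Since the transition functions $\varphi_i \circ \varphi_{i+1}^{-1}$ are locally in $\sG$, on each relevant overlap there is a unique $g_i \in \sG$ agreeing with $\varphi_i \circ \varphi_{i+1}^{-1}$ on the connected piece through $\gamma$. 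One then sets $D_\gamma := g_0 g_1 \cdots g_{k-1} \circ \varphi_k$ near $q$; equivalently, one rectifies each chart so that it continues $\varphi_0$, producing a locally defined $(X,\sG)$-map along $\gamma$.

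The second step is to show this analytic continuation depends only on the homotopy class of $\gamma$ rel endpoints: I would use that two charts around the same point, both of which are $(X,\sG)$-maps continuing the same germ, must agree (since an $(X,\sG)$-map locally equals an element of $\sG$, and an element of $\sG$ is determined by its germ at a point because the action is real analytic and effective — or more simply, because two elements of $\sG$ agreeing on an open set are equal). Hence the continuation is locally constant in the path within the compact-open topology, so invariant under homotopy. Because $N$ is simply connected, the continuation depends only on the endpoint $q$, and patching the locally defined maps yields a well-defined global map $D \colon N \ra X$ which is an $(X,\sG)$-map (it is locally, by construction, equal to a composition of an element of $\sG$ with a chart, hence locally in $\sG$ after identifying source charts, so it satisfies the definition of an $(X,\sG)$-map). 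For uniqueness, if $D'$ is another such global $(X,\sG)$-map, then near $p$ both $D$ and $D'$ are charts, so $D \circ (D')^{-1}$ is locally in $\sG$ near $D'(p)$, i.e.\ equals some $g \in \sG$ on a connected neighborhood; by the analytic continuation / identity principle just described, $D = g \circ D'$ on all of the connected manifold $N$.

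The main obstacle is the homotopy-invariance step: one must carefully check that the element of $\sG$ obtained by composing the transition correction terms $g_0 \cdots g_{k-1}$ is independent of the choice of chain of charts covering a fixed $\gamma$, and then that it varies continuously (hence is locally constant) as $\gamma$ is deformed. The key technical input making both work is the rigidity statement that an $(X,\sG)$-map between connected open subsets of $X$ which agrees with some $g \in \sG$ on a nonempty open set agrees with $g$ everywhere on its connected domain — this is the ``unique analytic continuation'' property that upgrades the local freedom (post-composition by $g \in \sG$ on a germ) to the global rigidity (post-composition by a single $g \in \sG$). I would isolate this as the crux and derive everything else as bookkeeping around it; the argument that $N$ simply connected kills the dependence on $\gamma$ is then the standard monodromy argument, and the ``unique up to post-composition by $\sG$'' clause is immediate from the basepoint construction (changing $\varphi_0$ to $g \circ \varphi_0$ changes $D$ to $g \circ D$).
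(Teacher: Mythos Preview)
Your proposal is correct and follows essentially the same route as the paper: fix a basepoint chart, analytically continue along a path by composing the transition elements $g_i\in\sG$ picked out on successive overlaps, invoke unique analytic continuation to get independence of the chain of charts, and use simple connectivity for independence of the path; uniqueness up to post-composition by $\sG$ is handled exactly as you say. The paper's proof is terser but the ideas, the order of the steps, and the identification of unique analytic continuation as the key input all match.
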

\begin{proof}
Choose a point $n_0\in N$, and a chart $(U_0,\varphi_0)$ around $N$. We will extend $\varphi_0\colon U_0\ra X$ to a map $D$ defined on $N$ such that $D|_{U_0} = \varphi$. For every point $n\in N$, we will define the value $D(n)\in X$ in the following way. Choose a path $\gamma\colon [0,1]\ra N$ such that $\gamma(0)=n_0$, $\gamma(1)=n$. We can find charts $(U_1,\varphi_1), \dots, (U_k,\varphi_k)$ and points $t_0, \dots, t_k, s_0, \dots, s_k \in [0,1]$ such that
\begin{enumerate}\itemsep=0pt
\item[1)] $0=t_0 < t_1 < s_0 < t_2 < s_1 < t_3 < \dots < s_{k-2} < t_k < s_{k-1} < s_k = 1$,
\item[2)] $\gamma([t_0,s_0)) = U_0\cap \gamma([0,1])$,
\item[3)] $\gamma((t_i,s_i)) = U_i \cap \gamma([0,1])$,
\item[4)] $\gamma((t_k,s_k]) = U_k \cap \gamma([0,1])$.
\end{enumerate}
The path $\gamma((t_i,s_{i-1}))$ is contained in a connected component $C_i$ of $U_{i-1} \cap U_i$. There exists a~$g_i\in \sG$ such that $\varphi_{i-1}\circ \varphi_i^{-1}|_{\varphi_i(C_i)} = g_i|_{\varphi_i(C_i)}$.
We define
\begin{gather*}D(n) = g_1\circ g_2\circ \dots \circ g_k \circ \varphi_{k}(n).\end{gather*}
Now it is necessary to show that the value $D(n)$ is independent on the choice of the charts $(U_i,\varphi_i)$, with $i\geq 1$. This is an application of the principle of unique analytic continuation. Then, we need to show that $D(n)$ does not depend on the choice of the curve $\gamma$. This comes from the fact that~$M$ is simply-connected, hence every other curve $\gamma'$ is homotopic to~$\gamma$ relatively to the end-points. This defines a global $(X,\sG)$-map $D$, which depends only on the choice of~$(U_0,\varphi_0)$. The principle of unique analytic continuation gives the uniqueness of~$D$ up to an element of~$\sG$.
\end{proof}

Given an $(X,\sG)$-manifold $M$, we denote its universal covering by $\widetilde{M}$. By Proposition \ref{prop:geometry of coverings}, $\widetilde{M}$~inherits an $(X,\sG)$-structure from $M$. Since $\widetilde{M}$ is simply-connected, there is a global $(X,\sG)$-map
\begin{gather*}D\colon \ \widetilde{M} \ra X \end{gather*}
unique up to post-composition by an element of $\sG$.
\begin{Definition}The map $D$ is called the \defin{developing map} of the $(X,\sG)$-manifold $M$.
\end{Definition}

The fundamental group $\pi_1(M)$ acts on $\widetilde{M}$ by deck transformations. This action preserves the $(X,\sG)$-structure on $\widetilde{M}$, hence we have the inclusion $\pi_1(M)<\Aut_{(X,\sG)}(\widetilde{M})$. The composition of an element $\gamma\in\pi_1(M)$ with the developing map~$D$ is again a developing map, hence there exists an element $h(\gamma)\in \sG$ such that
\begin{gather*} D \circ \gamma = h(\gamma) \circ D. \end{gather*}
The map $h\colon \pi_1(M)\ra \sG$ is a group homomorphism, and the formula above tells us that the developing map is $h$-equivariant.

\begin{Definition}The group homomorphism $h$ is called the \defin{holonomy representation} of the $(X,\sG)$-manifold $M$. The pair $(D,h)$ is called the \defin{developing pair} of the $(X,\sG)$-manifold~$M$.
\end{Definition}

\begin{Example}In the case of a Kleinian geometric structure $M=\Omega/\Gamma$, for some $\Omega \subset X$ and $\Gamma < \sG$, the developing map is a covering
\begin{gather*}D\colon \ \widetilde{M} \ra \Omega\end{gather*}
and the holonomy representation is a homomorphism
\begin{gather*}h\colon \ \pi_1(M)\ra \Gamma \end{gather*}
such that $\ker(h)=\pi_1(\Omega)$.
\end{Example}

If we change the developing map by post-composing it with an element $g\in\sG$, the holonomy representation changes by conjugation by $g$. In other words, the group $\sG$ acts on the developing pairs in the following way:
\begin{gather*}g \cdot (D,h) = \big(g\circ D, g h g^{-1}\big). \end{gather*}
The developing pair $(D,h)$ of the $(X,\sG)$-manifold $M$ is well defined up to this action of $\sG$. The developing pair completely determines the $(X,\sG)$-structure on $M$, as we will now see.

\begin{Definition}Let $M$ be a manifold without a specified $(X,\sG)$-structure. We will say that a pair $(D,h)$ is an $(X,\sG)$-\emph{developing pair} for $M$ if
\begin{enumerate}\itemsep=0pt
\item[1)] $h$ is a representation $h\colon \pi_1(M)\ra \sG$,
\item[2)] $D$ is an $h$-equivariant local diffeomorphism.
\end{enumerate}
\end{Definition}

Given an $(X,\sG)$-developing pair $(D,h)$ for $M$, we can construct an $(X,\sG)$-structure in the following way: let $U$ be a simply-connected open subset of $M$, and let $s\colon U\ra \widetilde{M}$ be a section of the universal covering. Assume that $U$ is small enough, so that $s(U)$ is an open subset where~$D$ is a diffeomorphism. Then $(U, D\circ s)$ is a chart, and the collection of all the charts of this type forms an atlas for a $(X,\sG)$-structure on $M$. This is the unique $(X,\sG)$-structure on $M$ with developing pair~$(D,h)$.

\subsection{Parameter spaces}

Given a fixed manifold $M$, we want to define a parameter space of all $(X,\sG)$-structures on~$M$.

\begin{Definition}We will say that two $(X,\sG)$-structures on $M$ are \defin{isotopic} if there exists a~diffeomorphism $f\colon M\ra M$ isotopic to the identity, which is an $(X,\sG)$-isomorphism between the first structure and the second.
\end{Definition}

We will denote by $\Dd_{(X,\sG)}(M)$ the set of all the $(X,\sG)$-structures on $M$ up to isotopy. The topology on $\Dd_{(X,\sG)}(M)$ is given by the $C^\infty$-topology on the corresponding developing maps. Let's see this in more detail.

Consider the space $\Dev_{(X,\sG)}(M)$ of all $(X,\sG)$-developing pairs $(D,h)$ for $M$. This space is endowed with the $C^\infty$-topology on the developing maps. Given a sequence of developing pairs $(D_k,h_k)$, it is easy to check that if the sequence $(D_k)$ converges to $D_0$ in the $C^\infty$-topology, then the sequence $(h_k)$ converges point-wise to~$h_0$.

Choose a point $m\in M$, and consider the group $\Diffeo_0(M,m)$ of all diffeomorphisms of~$M$ that fix the point $m$ and are isotopic to the identity. Every element of this group can be lifted in a unique way to a diffeomorphism of $\widetilde{M}$ that fixes the fiber over $m$. In this way, $\Diffeo_0(M,m)$ acts on $\widetilde{M}$. The group $\Diffeo_0(M,m) \times G$ acts on $\Dev_{(X,\sG)}(M)$ in the following way:
\begin{gather*}(f,g)\cdot (D,h) = \big(g\circ D \circ f, g h g^{-1}\big). \end{gather*}
We have that
\begin{gather*}\Dd_{(X,\sG)}(M) = \Dev_{(X,\sG)}(M)/ \Diffeo_0(M,m) \times G.\end{gather*}
In this way, the parameter space of $(X,\sG)$-structures on $M$ inherits the quotient topology.

\section{Representations and flat bundles} \label{sec:character varieties}

In this section, we review the correspondence between conjugacy classes of representations of the fundamental group of a manifold and isomorphism classes of flat bundles. We tried to keep the required Lie theory to a minimum, anyway, for all the Lie-theoretical notions, the reader can refer to \cite{Helgason,HumphreysBook,SerreBook}.

\subsection{Character varieties}

Let $\Gamma$ be a finitely generated group. Here, the most interesting case is when $\Gamma$ is the fundamental group of a closed manifold, but for the moment it can be arbitrary. Let $\sG$ be a reductive Lie group with Lie algebra $\fg$. We will denote by $\Hom(\Gamma,\sG)$ the set of all representations (i.e., group homomorphisms) of $\Gamma$ in $\sG$, endowed with the topology of point-wise convergence of representations.

\begin{Definition}A \defin{reductive representation} of $\Gamma$ in $\sG$ is a representation $\rho\colon \Gamma \ra \sG$ such that the induced action on $\fg$ given by the adjoint representation is completely reducible.
\end{Definition}

\begin{Example}If $\sG$ is a linear group, then $\rho$ is reductive if and only if it is completely reducible.
\end{Example}

We will denote by $\Hom^*(\Gamma,\sG)$ the subspace of all reductive representations of $\Gamma$ in $\sG$. The group $\sG$ acts on $\Hom^*(\Gamma,\sG)$ by conjugation, and the action is proper. We will denote the quotient by this action by
\begin{gather*}\Xx(\Gamma,\sG) = \Hom^*(\Gamma,\sG) / \sG. \end{gather*}

\begin{Definition}The space $\Xx(\Gamma,\sG)$ is called the \defin{character variety} of $\Gamma$ in $\sG$.
\end{Definition}

Character varieties are Hausdorff topological spaces. They are in general not manifolds since they can have singularities, but they are always locally contractible.

When $\Gamma = \pi_1(S)$, for a closed orientable surface $S$ of genus $g\geq 2$ and $\sG$ is a real Lie group, there are results describing the topology of some connected components of the character varieties.

\begin{Example} \label{exa:special representations}\quad
\begin{enumerate}\itemsep=0pt
\item When $\sG=\sPSL(2,\R)$, Goldman \cite{GoldmanThesis} used a topological invariant, the Euler number, to classify the connected components of the character variety: it has $4g-3$ connected components corresponding to the values of the Euler number from $2-2g$ to $2g-2$. Moreover Goldman proved that a representation in $\sPSL(2,\R)$ is discrete and faithful if and only if it has Euler number $\pm(2g-2)$. Such representations are called \defin{Fuchsian representations}, and they form two connected components of the character variety, each of whom is a copy of the Teichm\"uller space $\Tt(S)$ of the surface. Hitchin \cite{selfduality} described the topology of all the connected components with non-zero Euler number.
\item Similarly, when $\sG=\sPGL(2,\R)$, the set of discrete and faithful representations, again called Fuchsian representations, forms a connected component of the character variety which is a~copy of the Teichm\"uller space $\Tt(S)$ of the surface. This component is then homeomorphic to $\R^{6g-6}$, and it is also denoted by $\Hit(S,2)$, see below.
\item Consider now the case when $\sG=\sPGL(n,\R)$. A \defin{Fuchsian representation} in $\sPGL(n,\R)$ is defined as the composition of a Fuchsian representation in $\sPGL(2,\R)$ with the irreducible representation $\sPGL(2,\R)\ra \sPGL(n,\R)$. This construction gives an embedding of the Teichm\"uller space $\Tt(S)$ in $\Xx(\pi_1(S), \sPGL(n,\R))$, whose image is called the \defin{Fuchsian locus}. Hitchin \cite{liegroupsteichmuller} proved that the connected component of the character variety containing the Fuchsian locus is homeomorphic to $\R^{(n^2-1)(2g-2)}$. This component is called the \defin{Hitchin component}, and denoted by $\Hit(S,n)$. Labourie~\cite{AnosovFlowsLabourie} described the geometry of the representations in this component. The Hitchin components share many properties with the Teichm\"uller spaces, hence they are sometimes called \defin{higher Teichm\"uller spaces}, and they give the name to higher Teichm\"uller theory.
\item Hitchin~\cite{liegroupsteichmuller} defined special components in the character varieties of all split real simple Lie groups $\sG$. They are homeomorphic to $\R^{\dim(\sG)(2g-2)}$.
\item The Euler number can be generalized to representations into all Lie groups of Hermitian type, in this case it is called the Toledo number, see Toledo \cite{Toledo}. Representations with maximal value of the Toledo number are called \defin{maximal representations}, and they form a~union of connected components in the corresponding character varieties. This is another way to generalize Fuchsian representations to higher rank Lie groups. For $\sG = \sSp(4,\R)$ and $\sPSp(4,\R)$, an explicit description of the topology of the maximal components was determined in a joint work with Brian Collier \cite{SO23LabourieConj}.
\end{enumerate}
\end{Example}

When $\sG$ is a complex Lie group, we don't have explicit descriptions of connected components of character varieties. But there are at least some special open subsets that are particularly interesting. For example, in the character variety of $\Xx(\pi_1(S),\sPGL(2,\C))$ we have the open subset of \defin{quasi-Fuchsian representations}, denoted by~$\QFuch(S)$. Quasi-Fuchsian representations can be defined as those representations whose action on $\CP^1$ is topologically conjugate to the action of a Fuchsian representation on~$\CP^1$. The open subset $\QFuch(S)$ is homeomorphic to~$\R^{12g-12}$.

Hitchin components generalize Teichm\"uller spaces to higher rank Lie groups. In a similar way, there are some special open subsets of the character varieties of a simple complex Lie group $\sG$ that generalize the space of quasi-Fuchsian representations. We will call it the space of \defin{quasi-Hitchin representations}. To define them, consider the open subset
\begin{gather*}\sB\text{-}\Anosov(\pi_1(S),\sG) \subset \Xx(\pi_1(S),\sG)\end{gather*}
consisting of all $\sB$-Anosov representations, where $\sB$ is the Borel subgroup of $\sG$. The space of quasi-Hitchin representations is then defined as the connected component of $\sB\text{-}\Anosov(\pi_1(S),\sG)$ containing the Hitchin component of the split real form of $\sG$. For the group $\sPGL(n,\C)$, we will denote the space of quasi-Hitchin representations by $\QHit(S,n)$.

\subsection{Flat bundles}

Let $M$ be a manifold, and let $X$ be a manifold endowed with an effective action of $\sG$.

\begin{Definition}
A \defin{fiber bundle} on $M$ with \defin{structure group} $\sG$ and \defin{fiber} $X$ (also called a \defin{$\sG$-bundle} with fiber $X$) is a manifold $B$ with a smooth map $\pi\colon B \ra M$ and a maximal $\sG$-atlas for $\pi$. Recall that a \defin{$\sG$-atlas} is a set of charts $\{(U_i,\varphi_i)\}$ where the $U_i$s are open subsets of $M$ which cover $M$, and
\begin{gather*}\varphi_i\colon \ \pi^{-1}(U_i) \ra U_i \times X \end{gather*}
is a diffeomorphism that intertwines $\pi|_{U_i}$ and the projection on the first factor. The $\varphi_i$s must be \defin{$\sG$-compatible} in the following sense: the maps
\begin{gather*}\varphi_i \circ \varphi_j^{-1}\colon \ (U_i \cap U_j) \times X \ra (U_i \cap U_j) \times X \end{gather*}
are of the form $\varphi_i \circ \varphi_j^{-1}(m,x) = (m,t_{ij}(m)x)$, where $t_{ij}(m)\in \sG$. The functions
\begin{gather*}t_{ij}\colon \ U_i \cap U_j \ra \sG\end{gather*}
are called the \defin{transition functions} of the atlas.
\end{Definition}

It is interesting to remark that the bundle is determined up to isomorphism by the transition functions, and that the transition functions don't depend at all on the space $X$. This has the following consequence: if $X, Y$ are manifolds with effective actions of $\sG$, then a bundle~$B$ with structure group~$\sG$ and fiber~$X$, determines a~bundle~$B(Y)$ with the same structure group and fiber $Y$. The bundle $B(Y)$ is defined as the bundle with fiber $Y$ having the same transition functions as the bundle~$B$.

More generally, given a group homomorphism $q\colon \sG \ra \sH$, assume that $X$ is a manifold with an effective $\sG$-action, $Y$ is a manifold with an effective $\sH$-action, and $B$ is a $\sG$-bundle with fiber~$X$. We can apply the construction given above by composing the transition functions of $B$ with the homomorphism $q$. This produces an $\sH$-bundle $B(Y)$ with fiber $Y$.

\begin{Definition}The bundle $B(Y)$ is called the \defin{associated bundle} to $B$ with fiber $Y$.
\end{Definition}

\begin{Example}\quad
\begin{enumerate}\itemsep=0pt
\item The most important example is the special case when $X = \sG$, acting on itself on the left. A bundle with structure group and fiber $\sG$ is called a \defin{principal} $\sG$-bundle.
\item Another fundamental example is the case when the Lie group $\sG$ is a \defin{linear group}, i.e., when it can be embedded as a Lie subgroup of $\sGL(n,\R)$ or $\sGL(n,\C)$. In this case it admits an effective linear action on $V=\R^n$ or $\C^n$, and a $\sG$-bundle with fiber $V$ is called a \defin{vector bundle}. Starting from every bundle $B$ with structure group $\sG$ and some fiber, we can construct the associated vector bundle $B(V)$.
\item Similarly, a \defin{projective group} is a Lie group $\sG$ that can be embedded as a Lie subgroup of $\sPGL(n+1,\R)$ or $\sPGL(n+1,\C)$. In this case it admits an effective projective action on $P = \RP^{n}$ or $\CP^{n}$, and a $\sG$-bundle with fiber $P$ is called a \defin{projective bundle}. Starting from every bundle $B$ with structure group $\sG$ and some fiber, we can construct the associated projective bundle $B(P)$.
\end{enumerate}
\end{Example}

There is a close relationship between vector bundles and projective bundles. Assume that $\sG$ is a linear group, $X = \R^{n+1}$ or $\C^{n+1}$, $\sH$ is the corresponding projectivized group and $Y=\RP^{n}$ or $\CP^{n}$. From every vector bundle $E$ with structure group $\sG$, we can construct the associated projective bundle $E(Y)$. We will denote $E(Y)$ by $\bP(E)$, the \defin{projectivized bundle} of $E$.

\begin{Definition}A \defin{flat structure} on a $\sG$-bundle $B$ is an atlas of $B$ satisfying the additional condition that all transition functions are locally constant, and maximal among all atlases sa\-tisfying this additional condition. A bundle with a flat structure is called a \defin{flat bundle}, or a \defin{local system}.
\end{Definition}

The flat structure is just a special atlas, hence, as explained above, it does not depend on the fiber. It is thus possible to construct \defin{associated bundles} and to replace a flat bundle $B$ with fiber $X$ by a flat bundle $B(Y)$ with fiber $Y$.

If $\sG$ is a linear group acting on a vector space $V=\R^n$ or $\C^n$, starting from every flat bundle~$B$ with fiber $X$, we can change fiber and construct the vector bundle $B(V)$, with a flat structure. A~flat structure on a vector bundle can be described by a \defin{flat connection}, i.e., a connection with vanishing curvature form. This description is useful for doing computations.

A flat bundle $B$ with fiber $X$ has a well defined foliation, called the \defin{parallel foliation}, that can be described in local charts: in $\pi^{-1}(U_i)$, for every $x \in X$ there is a local leaf given by $\varphi_i^{-1}(U_i \times \{x\})$. The local foliations defined by the different charts all match up, giving rise to a~global foliation.

A \defin{local parallel section} of a flat bundle is a section $s\colon U \ra B$ defined on an open subset $U$, which is locally constant when restricted to every chart of the flat structure. In other words, it is a section whose image is contained in a leaf of the parallel foliation. Similarly, given a curve $\gamma\colon [0,1]\ra M$, a~\defin{parallel section along} $\gamma$ is a section along $\gamma$ which is locally constant in the charts.

Using parallel sections, we can define the \defin{parallel transport operator} along a curve $\gamma\colon [0,1]\ra M$: it is an operator
\begin{gather*}P_\gamma\colon \ \pi^{-1}(\gamma(0)) \ra \pi^{-1}(\gamma(1)) \end{gather*}
defined in the following way: given $x_0\in \pi^{-1}(\gamma(0))$, there exists a unique parallel section~$s$ along~$\gamma$ such that $s(0)=x_0$. We define $P_\gamma(x_0) = s(1)$. The parallel transport $P_\gamma$ only depends on the homotopy class of~$\gamma$ relative to the end points.

\subsection{Monodromy}

Let $\pi\colon B\ra M$ be a flat $\sG$-bundle with fiber $X$. Given a base point $m_0 \in M$, we can use a chart to identify the fiber $\pi^{-1}(m_0)$ with $X$. If we change chart, this identification changes by the action of an element of $\sG$. Now the parallel transport $P_\gamma$ along a loop $\gamma$ based at~$m_0$ is a~map $P_\gamma\colon X \ra X$, which agrees with the action of an element of $\sG$, hence we can write $P_\gamma\in\sG$. Since~$P_\gamma$ only depends on the homotopy class of $\gamma$, we get a map
\begin{gather*}P\colon \ \pi_1(M,m_0) \ra \sG. \end{gather*}

This map behaves well under composition of loops, hence it is a representation. If we change the chart around $m_0$, the representation changes by conjugation by an element of~$\sG$.

\begin{Definition}The representation $P$ is called the \defin{monodromy representation} of the bundle. We will say that a flat bundle is \defin{reductive} if the monodromy representation is reductive.
\end{Definition}

We will denote by $\Flat(M,G,X)$ the space of all reductive flat $G$-bundles with fiber $X$ up to isomorphism. If $X$, $Y$ are manifolds with effective actions of $\sG$, the associated bundle construction gives a natural bijection $\Flat(M,G,X)\ra \Flat(M,G,Y) $. Hence, we can suppress the $X$ in the notation, and consider the space $\Flat(M,G)$, parametrizing reductive flat $\sG$-bundles with any fixed fiber $X$.

The monodromy representation gives a map
\begin{gather*}\Pp\colon \ \Flat(M,G) \ra \Xx(\pi(M), G). \end{gather*}

\begin{Proposition} \label{prop:monodromy}The map $\Pp$ is a bijection.
\end{Proposition}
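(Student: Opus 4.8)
The plan is to construct an inverse to $\Pp$ directly, using the standard flat-bundle-from-representation construction, and then to check that the two assignments are mutually inverse up to isomorphism. First I would fix a base point $m_0\in M$ and identify $\pi_1(M)=\pi_1(M,m_0)$. Given a reductive representation $\rho\colon\pi_1(M)\ra\sG$, let $\widetilde M$ be the universal cover, on which $\pi_1(M)$ acts by deck transformations, and form the associated bundle
\begin{gather*}
B_\rho = \big(\widetilde M\times X\big)\big/\pi_1(M),
\end{gather*}
where $\gamma\cdot(\tilde m,x)=(\gamma\tilde m,\rho(\gamma)x)$. The projection to the first factor descends to $\pi\colon B_\rho\ra M$, and the local sections of $\widetilde M\to M$ over small open sets $U_i\subset M$ give trivializations $\varphi_i\colon\pi^{-1}(U_i)\ra U_i\times X$ whose transition functions are the locally constant maps $t_{ij}\equiv\rho(\gamma_{ij})$ determined by the deck transformations comparing the two lifts on $U_i\cap U_j$. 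This exhibits $B_\rho$ as a flat $\sG$-bundle with fiber $X$, and since $\rho$ is reductive by hypothesis, $B_\rho$ is a reductive flat bundle. Conjugating $\rho$ by $g\in\sG$ changes the trivializations by $g$ and yields an isomorphic flat bundle, so $\rho\mapsto B_\rho$ descends to a well-defined map $\Xx(\pi_1(M),\sG)\ra\Flat(M,\sG)$.

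Next I would verify $\Pp([B_\rho])=[\rho]$. The parallel foliation of $B_\rho$ has leaves given by the images of $\widetilde M\times\{x\}$ for $x\in X$; equivalently, in the charts $\varphi_i$ above, the local leaves are $\varphi_i^{-1}(U_i\times\{x\})$, which match up precisely because the $t_{ij}$ are locally constant. Parallel transport along a loop $\gamma$ based at $m_0$ is then computed by lifting $\gamma$ to a path $\tilde\gamma$ in $\widetilde M$ from a point $\tilde m_0$ to $\gamma\cdot\tilde m_0$: the unique parallel section along $\gamma$ starting at the class of $(\tilde m_0,x)$ ends at the class of $(\gamma\tilde m_0,x)=(\tilde m_0,\rho(\gamma)^{-1}x)$ in the chart over $m_0$, so $P_\gamma$ acts as $\rho(\gamma)$ (up to the usual inverse/orientation convention, which I would fix consistently). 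Hence the monodromy of $B_\rho$ is $\rho$ up to conjugation, i.e.\ $\Pp\circ(\rho\mapsto[B_\rho])=\mathrm{id}$.

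Finally I would show that any reductive flat $\sG$-bundle $B$ with fiber $X$ is isomorphic to $B_{P}$, where $P$ is its monodromy representation; this gives $(\rho\mapsto[B_\rho])\circ\Pp=\mathrm{id}$ and completes the proof. For this, pull $B$ back to $\widetilde M$: since $\widetilde M$ is simply connected, the flat structure on $\pi^*B$ has trivial monodromy, so the parallel foliation of $\pi^*B$ gives a global trivialization $\pi^*B\cong\widetilde M\times X$ (pick a point in the fiber over $\tilde m_0$, and use the parallel section through it over each point, which is well defined because leaves are sections over the simply connected base). The deck action of $\pi_1(M)$ on $\pi^*B$ covering the deck action on $\widetilde M$ must, in this trivialization, act on the $X$-factor through parallel transport around the corresponding loop, i.e.\ through $P(\gamma)$; therefore $B=\pi^*B/\pi_1(M)\cong B_P$ as flat $\sG$-bundles. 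The one genuine subtlety — the main obstacle — is bookkeeping the conventions so that the three maps (deck action, parallel transport, monodromy) are related by honest equalities rather than by inverses or by an extra conjugation; once a single consistent choice (right versus left action of $\pi_1(M)$, and parallel transport from $\gamma(0)$ to $\gamma(1)$) is pinned down, each step above is a routine unwinding of definitions. Reductivity is used only to confirm that the bundle $B_\rho$ lies in $\Flat(M,\sG)$ and that $\Pp$ lands in $\Xx(\pi_1(M),\sG)$; it plays no role in the bijectivity argument itself.
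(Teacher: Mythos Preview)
Your proposal is correct and follows essentially the same approach as the paper: construct the associated flat bundle $B_\rho=(\widetilde M\times X)/\pi_1(M)$ as the inverse, verify its monodromy is $\rho$, and show every flat bundle with monodromy $\rho$ is isomorphic to it. The paper's proof is in fact terser than yours, asserting the last two verifications as ``easy to show'' without spelling out the parallel-transport computation or the pullback-to-$\widetilde M$ trivialization that you sketch.
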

\begin{proof}The inverse map is given by the following construction. Let $\rho\colon \pi_1(M)\ra \sG$ be a~re\-pre\-sentation. This gives an action of $\pi_1(M)$ on $\widetilde{M}\times X$, acting on the first factor by deck transformations and on the second factor via $\rho$. This action is properly discontinuous and free because the action on the first factor has these properties. Hence, we can construct the manifold
\begin{gather*}X_\rho = \big(\widetilde{M}\times X\big)/\pi_1(M). \end{gather*}
The projection on the first factor induces a map $p\colon X_\rho\ra M$ which turns $X_\rho$ into a $\sG$-fiber bundle with fiber $X$. Moreover, the product $ \widetilde{M}\times X$ induces a flat structure on the bundle. It is easy to show that the flat bundle $X_\rho$ has monodromy $\rho$, and that any other flat $\sG$-bundle with fiber~$X$ and monodromy $\rho$ is isomorphic to~$X_\rho$.
\end{proof}

\section{The graph of a geometric structure} \label{sec:graph}

In this section, we will see how geometric structures correspond to flat bundles with a transverse section. The flat bundle encodes the holonomy representation of the geometric structure, and the transverse section encodes the developing map. This construction is described in detail in Goldman's notes~\cite{GoldmanGSAVOR}.

\subsection{Sections and equivariant maps}

Let $\rho\colon \pi_1(M) \ra \sG$ be a representation, and consider the space $\Equiv(\rho,X)$ of smooth $\rho$-equivariant maps from the universal covering $\widetilde{M}$ to $X$, endowed with the $C^\infty$-topology. Let~$B$ be the a flat bundle over $M$ with fiber $X$ and holonomy $\rho$, and consider the space $\Gamma(M, B)$ of smooth sections of~$B$, endowed with the $C^\infty$-topology.

\begin{Proposition}There is a natural homeomorphism between $\Gamma(M, B)$ and $\Equiv(\rho,X)$.
\end{Proposition}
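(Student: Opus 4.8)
The plan is to exploit the explicit model of the flat bundle $B$ coming from Proposition~\ref{prop:monodromy}: up to isomorphism we may take $B = X_\rho = \big(\widetilde{M}\times X\big)/\pi_1(M)$, where $\pi_1(M)$ acts by deck transformations on the first factor and via $\rho$ on the second. A section $s\colon M\ra B$ of the projection $p\colon X_\rho\ra M$ pulls back, along the quotient map $q\colon \widetilde{M}\times X\ra X_\rho$, to a $\pi_1(M)$-equivariant section of the trivial bundle $\widetilde{M}\times X\ra \widetilde{M}$; such a section is the graph of a map $F\colon \widetilde{M}\ra X$, and the equivariance condition reads exactly as $F(\gamma\cdot \tilde m) = \rho(\gamma)\cdot F(\tilde m)$, i.e.\ $F\in\Equiv(\rho,X)$. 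So the first step is to write down the two maps $\Gamma(M,B)\ra\Equiv(\rho,X)$ and $\Equiv(\rho,X)\ra\Gamma(M,B)$ in this model, the latter sending $F$ to the section $m\mapsto q(\tilde m, F(\tilde m))$ (well-defined by equivariance, independently of the lift $\tilde m$).

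Next I would check that these two assignments are mutually inverse, which is essentially a diagram chase: starting from $s$, pulling back to $\widetilde M\times X$, reading off $F$, and then pushing $F$ back down returns $s$ because $q$ is surjective and the construction is by the same formula; conversely starting from $F$ the composite is the identity since the graph of $F$ is recovered as the unique $\pi_1(M)$-invariant lift of the section it defines. One should also observe that this uses only the flat-bundle structure through the identification $B\cong X_\rho$, and that a different choice of isomorphism $B\cong X_\rho$ changes both sides by the same transport, so the bijection is canonical.

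The remaining point, and the only one requiring a little care, is continuity in both directions for the $C^\infty$-topologies. Here I would argue locally: over a contractible open set $U\subset M$ choose a lift $U\hookrightarrow\widetilde M$ and a flat trivialization $p^{-1}(U)\cong U\times X$ compatible with $q$; in these coordinates the correspondence $s\leftrightarrow F$ is literally the identity (a section over $U$ is a smooth map $U\ra X$, which is the restriction of $F$ composed with the chosen lift). Hence in local charts the map and its inverse are continuous for the $C^\infty$-topology, and since the $C^\infty$-topology on $\Gamma(M,B)$ and on $\Equiv(\rho,X)$ is determined by the restrictions to such local charts (together with the equivariance constraint on the $\widetilde M$ side, which is a closed condition), the global map is a homeomorphism. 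The main obstacle is thus not conceptual but bookkeeping: making sure the local trivializations of $B$, the local sections of $\widetilde M\ra M$, and the $C^\infty$-topologies are matched up consistently so that ``locally it is the identity'' is a legitimate statement; once that is set up, naturality and bijectivity are immediate from the $X_\rho$ description.
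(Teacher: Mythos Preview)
Your proposal is correct and follows essentially the same approach as the paper: identify $B$ with the model $X_\rho=(\widetilde{M}\times X)/\pi_1(M)$, pull back sections to $\rho$-equivariant maps $\widetilde{M}\ra X$, send equivariant maps to sections by passing to the quotient, and verify continuity by working on well-covered open sets where the correspondence is the identity. Your write-up is in fact more careful than the paper's about spelling out mutual inverseness and the local continuity argument, but there is no substantive difference in strategy.
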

\begin{proof}Recall first that $B$ is isomorphic to $X_\rho$, the bundle defined in the proof of Proposi\-tion~\ref{prop:monodromy}. From the construction of $X_\rho$, we can see that the pull-back of $X_\rho$ to $\widetilde{M}$ is isomorphic to a~product $\widetilde{M}\times X$, with the product flat structure.

A section $s\in \Gamma(M, X_\rho)$ can be pulled back to a section $\widetilde{s}$ of $\widetilde{M}\times X$. A section of a product bundle is just a map $\widetilde{s}\colon \widetilde{M}\ra X$. The fact that $\widetilde{s}$ is a pull-back tells us that this map is $\rho$-equivariant. This gives a map between $\Gamma(M, X_\rho)$ and $\Equiv(\rho,X)$.

To find the inverse of this map, just notice that a $\rho$-equivariant map $f\colon \widetilde{M}\ra X$ is a section of the product bundle $\widetilde{M}\times X$. The fact that $f$ is $\rho$-equivariant implies that it passes to the quotient, giving a section $[f]$ of $X_\rho$.

To check that the maps are continuous, we can work locally on small open sets of $M$ which are well covered by the universal covering.
\end{proof}

\subsection{Transverse sections}

Let $\rho\colon \pi_1(M) \ra \sG$ be a representation, and $B$ be the flat bundle over $M$ with fiber $X$ and holonomy $\rho$.

\begin{Definition}A section $s\in \Gamma(M, B)$ is \defin{transverse} if it is transverse to the parallel foliation of the bundle.
\end{Definition}
\begin{Proposition}A section $s\in \Gamma(M, B)$ is transverse if and only if the corresponding $\rho$-equivariant map is
\begin{enumerate}\itemsep=0pt
\item[$1)$] an immersion if $\dim(M) \leq \dim(X)$,
\item[$2)$] a submersion if $\dim(M) \geq \dim(X)$.
\end{enumerate}
In particular, if $\dim(M)= \dim(X)$, then $s$ is transverse if and only if the corresponding $\rho$-equivariant map is a local diffeomorphism.
\end{Proposition}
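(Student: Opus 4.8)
The plan is to reduce the statement to a pointwise computation in a flat chart, where the parallel foliation becomes a product foliation and the rest is elementary linear algebra. First I would fix a point $m\in M$, a simply-connected open neighbourhood $U$ of $m$, and a flat chart $\varphi\colon \pi^{-1}(U)\ra U\times X$ of $B$. Since $\pi\circ s=\Id$, the section $s$ is an embedding whose image is an embedded submanifold of dimension $\dim(M)$, and in the chart $\varphi\circ s$ has the form $m'\mapsto(m',\sigma(m'))$ for a smooth map $\sigma\colon U\ra X$. Lifting $U$ to a sheet $\widetilde U\subset\widetilde M$ of the universal covering and unwinding the homeomorphism $\Gamma(M,B)\cong\Equiv(\rho,X)$ of the previous proposition, the $\rho$-equivariant map $\widetilde s$ restricted to $\widetilde U$ agrees, under the identifications $\widetilde U\cong U$ and $\varphi$, with $\sigma$ up to post-composition by a fixed element of $\sG$; in particular $\widetilde s$ is an immersion (resp.\ a submersion) at the point of $\widetilde U$ over $m$ if and only if $\sigma$ is so at $m$, and $\widetilde s$ has the corresponding property everywhere if and only if it has it at every point, which can be tested in such charts.

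Next I would compute the relevant tangent spaces at $p=(m,\sigma(m))\in U\times X$, writing $T_p(U\times X)=T_mU\oplus T_{\sigma(m)}X$. In the chart the parallel foliation is the product foliation $\{U\times\{x\}\}_{x\in X}$, so the leaf through $p$ has tangent space $T_mU\oplus\{0\}$, of dimension $\dim(M)$, while the tangent space to the image of $s$ is the graph $\{(v,d\sigma_m v)\,:\,v\in T_mU\}$ of $d\sigma_m$, again of dimension $\dim(M)$, sitting inside the $(\dim(M)+\dim(X))$-dimensional space $T_p(U\times X)$. A direct check shows that the intersection of these two subspaces is $\{(v,0)\,:\,d\sigma_m v=0\}\cong\ker(d\sigma_m)$ and that their sum is $T_mU\oplus d\sigma_m(T_mU)$. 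Hence the two subspaces are in general position --- which is what transversality of $s$ to the parallel foliation means --- exactly when $\ker(d\sigma_m)=0$ in case $\dim(M)\le\dim(X)$, and exactly when $d\sigma_m(T_mU)=T_{\sigma(m)}X$ in case $\dim(M)\ge\dim(X)$; the two conditions coincide when $\dim(M)=\dim(X)$. By the first paragraph this says precisely that $\widetilde s$ is an immersion when $\dim(M)\le\dim(X)$ and a submersion when $\dim(M)\ge\dim(X)$; when $\dim(M)=\dim(X)$ both hold simultaneously and, $d\sigma_m$ being square, amount to $\widetilde s$ being a local diffeomorphism.

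The computation itself is routine; the only point that needs care is the meaning of transversality of a submanifold to a foliation when its dimension and the codimension of the foliation do not match --- namely that the right condition is general position of $T_p(\text{image of }s)$ and $T_p(\text{leaf})$, which bifurcates into ``trivial intersection'' or ``spanning'' according to the sign of $\dim(M)-\dim(X)$ and is uniformly captured by the kernel/image dichotomy for the linear map $d\sigma_m$. A minor secondary point is to make the passage between the local representative $\sigma$ and the global equivariant map $\widetilde s$ precise: changing flat charts alters $\sigma$ only by post-composition with locally constant maps into $\sG$, which are local diffeomorphisms of $X$ and hence affect neither the immersion nor the submersion property. This is the same unique-analytic-continuation bookkeeping already used to construct developing maps and presents no real difficulty.
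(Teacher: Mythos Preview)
Your proof is correct and follows essentially the same approach as the paper's: both reduce to the local observation that, in a flat chart where $s$ reads as $m'\mapsto(m',\sigma(m'))$, a tangent vector $ds(v')$ lies in the leaf direction precisely when $d\sigma_m(v')=0$, from which the immersion/submersion dichotomy follows by comparing $\ker(d\sigma_m)$ and $\mathrm{im}(d\sigma_m)$ with the dimension count. The paper compresses this into a single sentence, while you spell out the linear-algebra bookkeeping and the passage between $\sigma$ and the global equivariant map~--- all of it sound and in the same spirit.
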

\begin{proof}
Let $f\colon \widetilde{M}\ra X$ be the corresponding $\rho$-equivariant map, and let $\pi\colon \widetilde{M} \ra M$ denote the universal covering.
Let $v\in T_{x}\widetilde{M}$, and $v' = d\pi[v] \in T_{\pi(x)} M$. Then the differential of $f$ vanishes at $v$ if and only if the differential of $s$ at $v'$ is tangent to the parallel foliation.
\end{proof}

\begin{Definition}If $\dim(M) = \dim(X)$, a \defin{graph of an $(X,\sG)$-structure} is a pair $(B,s)$ where $B$ is a flat bundle over~$M$ with fiber $X$ and $s\in \Gamma(M, B)$ is a transverse section.
\end{Definition}

Graphs of $(X,\sG)$-structures correspond to $(X,\sG)$-developing pairs, which determine $(X,\sG)$-structures on~$M$.

Let's see this more explicitly in the case of real or complex projective structures. Let $\K$ be $\R$ or $\C$, and consider the geometry $\KP^{n}$. Given a representation $\rho\colon \pi_1(M)\ra \sPGL(n+1,\K)$, we want to construct a $\KP^n$-structure on $M$ with holonomy $\rho$. To do this, we need to consider the flat bundle $B$ over $M$ with fiber $\KP^{n}$ and holonomy $\rho$, and construct a transverse section of $B$.

This becomes more concrete when $\rho$ lifts to a representation $\bar{\rho}\colon \pi_1(M) \ra \sGL(n+1,\K)$. In this case, there is a flat vector bundle $E$ with holonomy $\bar{\rho}$ such that the projectivized bundle~$\bP(E)$ is isomorphic to~$B$. The flat structure on $E$ is described by a flat connection~$\nabla$. A~section of~$B$ is the same thing as a line subbundle of~$E$. The next proposition shows how it is possible to verify whether a section of~$B$ is transverse with a computation in local coordinates involving the derivatives with reference to the flat connection on~$E$.

\begin{Proposition} \label{prop:transversality condition} Let $E$ be a flat vector bundle of rank $n+1$ over $M$, and $L \subset E$ be a line subbundle. Then $L$ is a transverse section of $\bP(E)$ if and only if for every $m\in M$ there exists a coordinate neighborhood $U$ of $m$ $($with coordinates $x_1, \dots, x_k$, where $k=\dim(M))$ and a local non-vanishing section $s\colon U \ra L$ such that the local vector fields
\begin{gather*}s, \nabla_{\!\!\!\frac{\partial}{\partial x_1}} s, \dots, \nabla_{\!\!\!\frac{\partial}{\partial x_k}} s \end{gather*}
satisfy one of the following conditions:
\begin{enumerate}\itemsep=0pt
\item[$1)$] are linearly independent on $U$ if $\dim(M) \leq n$,
\item[$2)$] span every fiber over $U$ if $\dim(M) \geq n$.
\end{enumerate}
\end{Proposition}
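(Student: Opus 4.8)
The plan is to reduce the statement to the transversality criterion already proved in the previous proposition, namely that a section $s$ of the flat bundle $B = \bP(E)$ is transverse if and only if the corresponding $\bar\rho$-equivariant map $f\colon \widetilde M \ra \KP^n$ is an immersion (when $\dim M \le n$) or a submersion (when $\dim M \ge n$). So the first step is translation: a line subbundle $L \subset E$ is the same datum as a section of $\bP(E)$, and after pulling back to $\widetilde M$ and trivializing the flat bundle $E$ as $\widetilde M \times \K^{n+1}$ using a flat frame, $L$ corresponds to a $\bar\rho$-equivariant map $\widetilde s\colon \widetilde M \ra \K^{n+1}\setminus\{0\}$, which projectivizes to the equivariant map $f\colon \widetilde M \ra \KP^n$. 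The local non-vanishing section $s\colon U \ra L$ in the statement is, in a flat frame over $U$, precisely a local lift of $f$ to $\K^{n+1}\setminus\{0\}$; and since the frame is flat, $\nabla_{\partial/\partial x_i} s$ is computed just by differentiating the component functions.

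The second step is the linear-algebra heart. Fix $m \in U$ and let $\bar s(m) \in \K^{n+1}\setminus\{0\}$ be the value of the lift. The tangent space $T_{f(m)}\KP^n$ is canonically $\Hom(\K \bar s(m), \K^{n+1}/\K \bar s(m))$, and for a curve $x_i \mapsto s(x)$ the differential $df_m(\partial/\partial x_i)$ is the map sending $\bar s(m)$ to the class of $\nabla_{\partial/\partial x_i} s(m)$ modulo $\K \bar s(m)$. Hence the vectors $df_m(\partial/\partial x_1), \dots, df_m(\partial/\partial x_k)$ in $T_{f(m)}\KP^n$ are linearly independent (resp.\ span) if and only if the vectors $\nabla_{\partial/\partial x_1}s(m), \dots, \nabla_{\partial/\partial x_k}s(m)$ are linearly independent modulo $\K \bar s(m)$ (resp.\ together with $\bar s(m)$ span all of $\K^{n+1}$). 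Adding back $s(m)$ itself, this says exactly that $s, \nabla_{\partial/\partial x_1}s, \dots, \nabla_{\partial/\partial x_k}s$ are linearly independent (in case $k \le n$) or span the fiber (in case $k \ge n$) at $m$. Since $m \in U$ was arbitrary, $f$ is an immersion on all of $\widetilde M$ iff condition (1) holds near every point, and $f$ is a submersion iff condition (2) holds near every point; note that independence of $n+2$ or more vectors in an $(n+1)$-dimensional space is impossible, so case (1) is automatically the relevant one when $k \le n$ and case (2) when $k \ge n$, matching the dichotomy in the previous proposition.

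The third step is just bookkeeping: the condition in the statement is stated pointwise on $M$ via an arbitrary local non-vanishing section of $L$, whereas $f$ lives on $\widetilde M$. But the flat bundle $E$ restricted to a small simply-connected $U \subset M$ is canonically trivial as a flat bundle, and a section of $\bP(E)|_U$ lifts (after shrinking $U$ so the image avoids a hyperplane, or simply because $U$ is contractible) to a non-vanishing section of $E|_U$; this local picture is isomorphic to the picture over a sheet of $\widetilde M$ lying over $U$. So the pointwise criterion over $U \subset M$ is literally the immersion/submersion criterion for $f$ over the corresponding sheet, and conversely every point of $\widetilde M$ sits over such a $U$. Invoking the previous proposition then closes the argument. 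I expect the only genuinely delicate point to be the identification in the second step of $df_m$ with the derivative $\nabla_{\partial/\partial x_i}s$ taken modulo the line $\K \bar s(m)$ — i.e.\ checking that this is well defined independently of the choice of lift $s$ (rescaling $s$ by a nowhere-zero function changes $\nabla s$ only by a multiple of $s$) and that it correctly represents the tautological identification $T_{[\,v\,]}\KP^n \cong \Hom(\K v, \K^{n+1}/\K v)$; everything else is routine.
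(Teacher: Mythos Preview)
The paper states this proposition without proof, so there is no argument to compare against. Your approach is correct and is the natural one in context: you reduce to the preceding proposition (that a section of $\bP(E)$ is transverse iff the corresponding equivariant map $f\colon\widetilde M\to\KP^n$ is an immersion or submersion), then identify $df_m(\partial/\partial x_i)$ with the class of $\nabla_{\partial/\partial x_i}s(m)$ in $E_m/L_m$ via the standard isomorphism $T_{[v]}\KP^n\cong\Hom(\K v,\K^{n+1}/\K v)$. The well-definedness check you flag (rescaling $s$ by a nonvanishing function changes $\nabla s$ only by a multiple of $s$) is indeed the only point requiring a moment's care, and you handle it. One small remark: the condition in the statement is ``there exists a coordinate neighborhood and a local non-vanishing section'', but your argument actually shows the condition is independent of both choices, so ``there exists'' and ``for all'' coincide here; it would do no harm to say this explicitly.
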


\subsection{The holonomy map}

If $\sG$ is reductive, we can consider the subspace
\begin{gather*}\Dd^*_{(X,\sG)}(M) \subset \Dd_{(X,\sG)}(M) \end{gather*}
of all $(X,\sG)$-structures on $M$ with reductive holonomy. This subspace has a natural map to the character variety, given by the holonomy representation:
\begin{gather*}\Hol\colon \ \Dd^*_{(X,\sG)}(M) \ra \Xx(\pi_1(M),\sG).\end{gather*}

\begin{Theorem}[Thurston's holonomy principle]If $M$ is a closed manifold, the map $\Hol$ is open and it has discrete fiber.
\end{Theorem}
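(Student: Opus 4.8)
The plan is to exploit the description of the deformation space $\Dd_{(X,\sG)}(M)$ as a quotient of the space $\Dev_{(X,\sG)}(M)$ of developing pairs, equipped with the $C^\infty$-topology on developing maps, and to reduce both claims to local statements about the map $(D,h) \mapsto h$. First I would recall that $\Hol$ is induced, on passing to quotients, by the continuous map $\Dev_{(X,\sG)}(M) \ra \Hom^*(\pi_1(M),\sG)$ sending $(D,h)$ to $h$; continuity here follows from the observation already made in the excerpt that $C^\infty$-convergence of the $D_k$ forces pointwise convergence of the $h_k$. Since the group actions are compatible (the $\sG$-factor acts on both sides by conjugation, and $\Diffeo_0(M,m)$ acts trivially on holonomies), $\Hol$ is well-defined and continuous; the content is openness and discreteness of fibers.

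For \emph{discreteness of fibers}, the key input is the rigidity of the developing map: given a representation $\rho$, I would show that an $(X,\sG)$-structure with holonomy $\rho$ is determined, up to isotopy, by a discrete amount of data, so that nearby structures with the \emph{same} holonomy must be isotopic. The mechanism is that the developing pair, by analytic continuation (the same principle of unique analytic continuation used in the construction of $D$), is locally rigid: a small $C^\infty$-deformation $D_t$ of $D$ with $h_t \equiv \rho$ fixed is, near the basepoint, a deformation through charts all of which are locally in $\sG$, and equivariance plus analyticity propagate this over all of $\widetilde{M}$; one then produces an isotopy of $M$ realizing the $(X,\sG)$-isomorphism between $D_0$ and $D_t$. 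Thus the fiber $\Hol^{-1}(\rho)$ is discrete in $\Dd^*_{(X,\sG)}(M)$.

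For \emph{openness}, the plan is the classical deformation argument: given an $(X,\sG)$-structure on $M$ with developing pair $(D_0,h_0)$ and a representation $h_1$ close to $h_0$, I would build a nearby developing pair $(D_1,h_1)$. Since $M$ is closed it admits a finite cover by simply-connected open sets $U_1,\dots,U_N$, each small enough that $D_0$ embeds $\widetilde{U_i}$ into $X$; on each $U_i$ the developing map is essentially unconstrained, and the only conditions to satisfy are the equivariance and cocycle-type compatibility conditions on overlaps, which involve finitely many elements of $\sG$. One perturbs these gluing data compatibly with $h_1$ — using that $\sG$ acts real-analytically and that $h_1$ is $C^0$-close to $h_0$ — and patches with a partition of unity to obtain a genuine $\rho$-equivariant local diffeomorphism $D_1$ (local diffeomorphism because being an immersion is an open condition and $M$ is compact). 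This yields an $(X,\sG)$-structure near the original with holonomy $h_1$, proving $\Hol$ is open.

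The main obstacle is the openness step: the finite-cover patching must be carried out so that the perturbed charts remain genuine homeomorphisms onto open subsets of $X$ and the perturbed transition data still form a consistent $\pi_1(M)$-equivariant cocycle — i.e., one must check that the partition-of-unity interpolation does not destroy the local-diffeomorphism property, which is where compactness of $M$ and the openness of the immersion condition are used in an essential way. Rather than reprove this in full, I would cite the standard treatment in Thurston's book \cite{ThurstonBook} or Goldman's notes \cite{GoldmanGSAVOR}, and simply indicate the two reductions above.
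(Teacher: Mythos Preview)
Your proposal lands on the right references and the right overall shape, but one step is wrong and the openness argument takes a different route from the paper's.

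On discreteness: the mechanism you describe is incorrect. A small deformation $D_t$ of $D_0$ with fixed holonomy $\rho$ is \emph{not} ``locally in $\sG$'' near the basepoint --- there are plenty of $\rho$-equivariant local diffeomorphisms $\widetilde{M}\to X$ near $D_0$ that do not differ from $D_0$ by an element of $\sG$, so analytic continuation has nothing to propagate. The correct argument is that, because $D_0$ is a local diffeomorphism and $M$ is compact, for $D_t$ sufficiently $C^\infty$-close one can write $D_t = D_0 \circ \widetilde{f}_t$ for a unique $\widetilde{f}_t\colon \widetilde{M}\to\widetilde{M}$ close to the identity; equivariance of $D_0$ and $D_t$ for the \emph{same} $\rho$ forces $\widetilde{f}_t$ to be $\pi_1(M)$-equivariant, so it descends to a diffeomorphism of $M$ isotopic to the identity. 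That is what yields discreteness. Since you ultimately defer to \cite{GoldmanGSAVOR} this is not fatal, but the sketch as written does not work.

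On openness: your finite-cover plus partition-of-unity patching is the classical argument and is fine, but the paper points to a different and shorter route, namely the graph formalism of Section~\ref{sec:graph}. An $(X,\sG)$-structure on $M$ is a transverse section of the flat $X$-bundle $B_\rho$; the family $\rho\mapsto B_\rho$ varies continuously, and transversality of a fixed section is an open condition when $M$ is compact. So the original section, transported to the nearby bundle $B_{\rho'}$, is still transverse and produces an $(X,\sG)$-structure with holonomy $\rho'$. This bypasses all the cocycle bookkeeping you describe and is why the paper says openness ``can be proved easily using graphs of $(X,\sG)$-structures.'' Your approach is equivalent in strength but longer; the graph argument is cleaner and more in keeping with the paper's viewpoint.
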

\begin{proof}See Goldman \cite{GoldmanGSAVOR}. The openness of the map $\Hol$ can be proved easily using graphs of $(X,\sG)$-structures.
\end{proof}

When $M$ is closed, the map $\Hol$ is very often a local homeomorphism, but not always (for a~counterexample, see Baues~\cite{BauesTorus}). This issue needs to be better understood:

\begin{Question}[refined Thurston's holonomy principle]\quad
\begin{enumerate}\itemsep=0pt
\item Is it true that the map $\Hol$ is always a branched local homeomorphism?
\item What are some sufficient conditions for it to be a local homeomorphism?
\end{enumerate}
\end{Question}

Other important questions are raised by the fact that the map $\Hol$ is in general neither injective nor surjective.
\begin{Question} \label{question:holonomy fibers}
Let $\rho\colon \pi_1(M)\ra \sG$ be a representation.
Is there an $(X,\sG)$-structure on $M$ with holonomy $\rho$? And in the affirmative case, how many are there?
\end{Question}
A complete answer to Question \ref{question:holonomy fibers} is known only in very special cases, for example for $\CP^1$-structures on closed surfaces (see Gallo--Kapovich--Marden \cite{GalloKapovichMarden}, Goldman \cite{GoldmanFuchsianHol}, Baba \cite{BabaGrafting1,BabaGrafting2}). In the case of $\RP^2$-structures on closed surfaces a partial answer is given in Choi--Goldman \cite{ChoiGoldmanClassification}. The character varieties are much easier to understand than the parameter spaces $\Dd^*_{(X,\sG)}(M)$, hence, if we can obtain a better understanding of Question~\ref{question:holonomy fibers}, we can use our knowledge about representations to understand parameter spaces of geometric structures.

A plan to answer these questions can be the following: given a representation $\rho$, we construct the corresponding flat bundle, and we then try to understand all the possible transverse sections. An obstacle is that even for representations that we know very well, we don't always understand the corresponding flat bundle well enough to see the transverse sections. This is the point when Higgs bundles can be very useful: they can give an explicit description of the flat bundle.

\section{How to use Higgs bundles?} \label{sec:higgs bundles}

We will show in some simple examples how Higgs bundles can be used to construct geometric structures on manifolds. The flat connection can be expressed in terms of solutions of Hitchin's equations and the transverse section can be constructed from the study of the holomorphic structure of the vector bundle. This idea first appeared in Baraglia's Ph.D.~Thesis~\cite{BaragliaThesis}.

\subsection[$\sSL(2,\R)$-Higgs bundles]{$\boldsymbol{\sSL(2,\R)}$-Higgs bundles}

In this subsection we will describe all the $\sSL(2,\R)$-Higgs bundles. We will use this description in Sections~\ref{subsec:hyperbolic structures}, \ref{subsection:almost fuchsian}, \ref{subsec:ads}, and \ref{subsec:projective structures Higgs}. Let $\Sigma$ be a closed Riemann surface.

\begin{Definition}An \defin{$\sSL(2,\R)$-Higgs bundle} on $\Sigma$ is a tuple $(E,Q,\omega,\varphi)$, where
\begin{itemize}\itemsep=0pt
\item[1)] $E$ is a holomorphic vector bundle on $\Sigma$ of rank $2$,
\item[2)] $Q\colon E\ra E^*$ is a holomorphic symmetric $\C$-bilinear form,
\item[3)] $\omega\in H^0\big(\Sigma,\Lambda^2 E\big)$ is a holomorphic $\C$-volume form such that $Q$ has volume $1$,
\item[4)] $\varphi\in H^0(\Sigma,\End(E)\otimes K)$ is $Q$-symmetric and satisfies $\tr(\varphi)=0$ (the \defin{Higgs field}).
\end{itemize}
\end{Definition}

The first three conditions say that $(E,Q,\omega)$ is a rank $2$ vector bundle with an $\sSO (2,\C)$-structure. In particular, $\Lambda^2 E = \Oo$.

The structure of such an $\sSL(2,\R)$-Higgs bundle can be made more explicit. This description was done by Hitchin \cite{selfduality}, who started from a different definition of $\sSL(2,\R)$-Higgs bundles. Consider the set of $Q$-isotropic vectors:
\begin{gather*}\Iso(Q) = \{v \in E \,|\, Q(v,v) = 0\}. \end{gather*}
In every fiber, this set is the union of two lines. $E$ has two line subbundles whose total spaces are given by:
\begin{gather*}L_+ = \left\{v \in \Iso(Q) \,|\, \forall\, w \in \Iso(Q){\setminus} \Span(v), \ i \frac{\omega(v,w)}{Q(v,w)} > 0 \right\}, \\
L_- = \left\{v \in \Iso(Q) \,|\, \forall\, w \in \Iso(Q){\setminus} \Span(v), \ i \frac{\omega(v,w)}{Q(v,w)} < 0 \right\}. \end{gather*}

Hence, we have $E=L_+ \oplus L_-$. The condition $\Lambda^2 E = \Oo$ now says that $L_+ = L_-^{-1}$. To simplify the notation, we write $L=L_+$, $L^{-1} = L_-$. The Higgs bundle can be written as
\begin{gather*}E = L \oplus L^{-1}, \qquad Q = \begin{pmatrix}
0 & 1\\
1 & 0
\end{pmatrix}, \qquad \omega = \frac{i}{\sqrt{2}}\begin{pmatrix}
0 & 1\\
-1 & 0
\end{pmatrix}, \qquad \varphi = \begin{pmatrix}
0 & a\\
b & 0
\end{pmatrix},\end{gather*}
where $a \in H^0\big(\Sigma, L^2 K\big)$, $b \in H^0\big(\Sigma, L^{-2} K\big)$. The condition for the Higgs bundle to be poly-stable is that:
\begin{enumerate}\itemsep=0pt
\item If $\deg(L) > 0$, then $b \neq 0$.
\item If $\deg(L) < 0$, then $a \neq 0$.
\item If $\deg(L) = 0$, then $a,b\neq 0$ or $a=b=0$.
\end{enumerate}
In the case when $a=b=0$, the Higgs bundle is strictly poly-stable, in all other cases it is stable. These conditions impose a restriction to the degree of $L$ for a poly-stable $\sSL(2,\R)$-Higgs bundle: $|\deg(L)|\leq g-1$ (Milnor--Wood inequality).

A poly-stable Higgs bundle where $L$ has maximal possible degree ($\deg(L)= g-1$) is called a~\defin{Fuchsian Higgs bundle}, and they correspond to Fuchsian representations. The stability condition $b\neq 0$ forces $L$ to be a square root of $K$ (we will write $L = K^{\frac{1}{2}}$). The section $b$ is a constant, and, up to gauge transformations we can assume $b=1$. The section $a$ is a quadratic differential, we will write $a=q_2 \in H^0\big(\Sigma,K^2\big)$.

Let $H$ be the Hermitian metric on $E$ that solves Hitchin's equations.
\begin{Proposition}[{\cite[Theorem~3.1]{AdSpaper}}] If an $\sSL(2,\R)$-Higgs bundle is stable, then
\begin{gather*}H = \begin{pmatrix}
h & 0\\
0 & h^{-1}
\end{pmatrix},\end{gather*}
for some real positive $h\in \Gamma(\Sigma, \bar{L}\otimes L)$.
\end{Proposition}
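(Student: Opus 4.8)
The plan is to exploit the fact that the metric $H$ solving Hitchin's equations is the unique Hermitian metric on $E$ for which the Chern connection, twisted by $\varphi$ and $\varphi^{*_H}$, is flat and the monodromy lies in $\sSL(2,\R)$; equivalently, $H$ is the unique metric compatible with the $\sSO(2,\C)$-structure $(Q,\omega)$ and with the reality structure of the $\sSL(2,\R)$-Higgs bundle. Concretely, I would first recall that compatibility with the holomorphic symplectic (volume) form $\omega$ forces $\det H = 1$ in the given frame, and compatibility with $Q$ forces $H$ to intertwine $Q$ with the conjugate pairing. Writing $H$ as a Hermitian matrix in the frame adapted to the splitting $E = L \oplus L^{-1}$, these two algebraic constraints already cut the space of candidate metrics down considerably: $\det H = 1$ removes one real parameter and $Q$-compatibility (together with the fact that $L_+$ and $L_-$ are the two $Q$-isotropic line bundles) pins down how $H$ must pair the two summands.

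The key step is to show that $H$ must be \emph{diagonal} with respect to the decomposition $E = L \oplus L^{-1}$, i.e.\ that $L$ and $L^{-1}$ are $H$-orthogonal. Here I would use the structure of the Higgs field $\varphi = \begin{pmatrix} 0 & a \\ b & 0 \end{pmatrix}$ together with the stability hypothesis. In the stable case at least one of $a, b$ is nonzero away from a proper analytic subset; combined with the uniqueness of the solution to Hitchin's equations, one argues that the solution must respect the symmetry of the data. A clean way to see this: the $\sSL(2,\R)$-Higgs bundle carries a holomorphic involution (coming from the $\mathbb{Z}/2$ grading by $L^{\pm 1}$, under which $\varphi$ is off-diagonal hence anti-invariant in a compatible sense), and by uniqueness the harmonic metric $H$ is invariant under this involution; an involution-invariant Hermitian metric in this frame is exactly one of the form $\mathrm{diag}(h, h')$. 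The volume normalization $\det H = 1$ then gives $h' = h^{-1}$, and positivity of $H$ gives $h$ real and positive; finally $h$ is a metric on $L$, which in the notation of the paper means $h \in \Gamma(\Sigma, \bar L \otimes L)$ (a positive section of the real line bundle $\overline{L}\otimes L$). That yields exactly the asserted form.

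I expect the main obstacle to be justifying the diagonal form rigorously rather than heuristically — i.e.\ promoting "the solution respects the symmetry of the data" into a genuine argument. The cleanest route is the uniqueness clause in the Hitchin--Simpson theorem: if $H$ solves Hitchin's equations then so does $\sigma^* H$ for any holomorphic automorphism $\sigma$ of the Higgs bundle preserving the data up to the relevant signs, so $\sigma^* H = H$; one must check that the grading involution is such a $\sigma$, which is where the off-diagonal shape of $Q$, $\omega$, and $\varphi$ is used. An alternative, more computational route — substituting a general Hermitian $H$ into Hitchin's equation $F_H + [\varphi, \varphi^{*_H}] = 0$ and reading off that the off-diagonal entry must be harmonic and hence, by the stability/degree constraints on $L$, identically zero — also works but is messier; I would present the symmetry argument as the main line and relegate the verification that the involution preserves the Higgs data to a short remark. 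The remaining assertions (reality, positivity, the bundle in which $h$ lives, and $\det H = 1$) are then immediate from the normalizations in the definition of an $\sSL(2,\R)$-Higgs bundle.
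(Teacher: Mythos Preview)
Your main argument via the grading involution is correct but takes a different route from the paper. The paper does not use the $\mathbb{Z}/2$ grading at all; instead it exploits the $Q$-symmetry of $\varphi$ directly. From $\varphi = Q^{-1}\varphi^{T}Q$ one checks that if $H$ solves Hitchin's equations for $(E,\varphi)$ then so does $\bar{Q}^{T}(H^{T})^{-1}Q$; uniqueness (stability) gives the single algebraic identity $H = \bar{Q}^{T}(H^{T})^{-1}Q$, and with $\det H = 1$ this already forces the off-diagonal entries to vanish and yields $H=\mathrm{diag}(h,h^{-1})$. So the ``compatibility with $Q$'' that you mention in passing and then set aside is in fact the whole proof in the paper --- there is no separate step needed for diagonality once you have it.

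Your involution argument (conjugation by $\sigma=\mathrm{diag}(1,-1)$ sends $\varphi$ to $-\varphi$, Hitchin's equation is invariant under $\varphi\mapsto -\varphi$, hence $\sigma^{*}H\sigma=H$) is a genuine alternative. It is the ``cyclic'' symmetry argument \`a la Baraglia and has the advantage of generalising verbatim to higher-rank cyclic Higgs bundles where no quadratic form $Q$ is available. The paper's approach, by contrast, is shorter here and is intrinsically tied to the real-form structure: it is exactly the statement that the harmonic metric on a $G$-Higgs bundle for a real form $G$ is compatible with the underlying orthogonal structure, phrased as a one-line uniqueness argument rather than assumed from the general theory.
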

\begin{proof}Consider the Higgs field $Q^{-1}\varphi^T Q \in H^0(\Sigma,\End(E)\otimes K)$. Then the metric $\bar{Q}^T\big(H^T\big)^{-1}Q$ is a solution of Hitchin's equations for the Higgs bundle $\big(E,Q^{-1}\varphi^T Q\big)$. The fact that $\varphi$ is $Q$-symmetric means that $\varphi = Q^{-1}\varphi^T Q$, hence $H = \bar{Q}^T\big(H^T\big)^{-1}Q$. This, plus the condition $\det(H)=1$ implies the statement.
\end{proof}

Let $\ell$ be a local holomorphic frame for $L$. Denote by $\ell'$ the dual holomorphic frame on $L^{-1}$. The pair $(\ell,\ell')$ is a local frame for $E$. In this local frame, we can write the flat connection given by the solutions of Hitchin's equations in the following way:
\begin{gather*}\nabla = d + H^{-1}\partial H + \varphi + H^{-1}\bar{\varphi}^T H = d + \begin{pmatrix}
-\partial \log h & a + h^2 \bar{b}\\
b + h^{-2} \bar{a} & \partial \log h
\end{pmatrix}. \end{gather*}

The real structure is given by
\begin{gather*}\tau\colon \ E \ni \begin{pmatrix}
v_1\\
v_2
\end{pmatrix} \lra
\begin{pmatrix}
0 & h\\
h^{-1} & 0
\end{pmatrix}
\begin{pmatrix}
\bar{v_1}\\
\bar{v_2}
\end{pmatrix} =
\begin{pmatrix}
h\bar{v_2}\\
h^{-1}\bar{v_1}
\end{pmatrix} \in E.
 \end{gather*}

And the real locus is given by
\begin{gather*}E_\R = \{v\in E \,|\, \tau(v)= v\}.\end{gather*}

\subsection{Hyperbolic structures on surfaces} \label{subsec:hyperbolic structures}

Now we show the simplest example of how to use Higgs bundles to construct geometric structures with given holonomy. We start with a Fuchsian representation $\rho\colon \pi_1(S) \ra \sPSL(2,\R)$, and we want to construct an $\bH^2$-structure with holonomy $\rho$. We will first construct a $\CP^1$-structure with holonomy $\rho$, and we will then verify that this $\CP^1$-structure is actually an $\bH^2$-structure.

We choose a complex structure $\Sigma$ on $S$ and we consider the $\sSL(2,\R)$-Higgs bundle $(E,\varphi)$ corresponding to a lift of $\rho$ to $\sSL(2,\R)$. Since $\rho$ is Fuchsian, we know that
\begin{gather*}E = K^{\frac{1}{2}} \oplus K^{-\frac{1}{2}}, \qquad \varphi = \begin{pmatrix}
0 & q_2\\
1 & 0
\end{pmatrix}, \qquad q_2 \in H^0\big(\Sigma,K^2\big). \end{gather*}

To construct a $\CP^1$-structure on $\Sigma$, we need to choose a line subbundle, and prove that it gives a transverse section of the projectivized bundle $\bP(E)$. We can choose $K^{\frac{1}{2}}$ as a subbundle. We will use the transversality condition from Proposition \ref{prop:transversality condition}. Given a local section $s$ of $K^{\frac{1}{2}}$, we can compute the derivatives:
\begin{gather*}s = \begin{pmatrix}
1\\0
\end{pmatrix}, \ \ \ \ \ \nabla_{\!\!\!\frac{\partial}{\partial z}}s =
\begin{pmatrix}
-\partial\log h\\1
\end{pmatrix}, \ \ \ \ \ \nabla_{\!\!\!\frac{\partial}{\partial \bar{z}}}s =
\begin{pmatrix}
0\\h^{-2}\bar{q_2}
\end{pmatrix}.\end{gather*}

Here we computed the derivatives in the complex directions $\frac{\partial}{\partial z}$ and $\frac{\partial}{\partial \bar{z}}$, but to apply Proposition \ref{prop:transversality condition} we need to transform into derivatives in the real directions. This gives the following modified condition: the section $K^{\frac{1}{2}}$ is transverse if and only if
\begin{gather*}\forall\, A,B\in \C, \qquad A \nabla_{\!\!\!\frac{\partial}{\partial z}}s + \bar{A} \nabla_{\!\!\!\frac{\partial}{\partial \bar{z}}}s + B s = 0 \qquad \Rightarrow \qquad A=B=0. \end{gather*}
Substituting, we see that the section $K^{\frac{1}{2}}$ is transverse if and only if
\begin{gather*}\forall\, A,B\in \C, \qquad
\begin{cases}
-A \partial\log h + B &= 0,\\
A +\bar{A}h^{-2}\bar{q_2} &= 0
\end{cases}
\qquad \Rightarrow \qquad A=B=0.
\end{gather*}

If $A\neq 0$, the second equation is equivalent to
\begin{gather*}\frac{A}{\bar{A}} = -h^{-2}\bar{q_2}. \end{gather*}
This cannot be satisfied because of the following lemma:
\begin{Lemma}[Hitchin \cite{selfduality}] In the above setup, we have
\begin{gather*} \big| h^{-2}\bar{q_2} \big| < 1. \end{gather*}
\end{Lemma}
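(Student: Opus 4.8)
The plan is to compare the pointwise norm $|h^{-2}\bar q_2|$ to $1$ using Hitchin's equation for the Fuchsian Higgs bundle. Recall the flat connection splits as $\nabla = \nabla^H + \Phi$, where $\nabla^H$ is the Chern connection of the Hermitian metric $H = \mathrm{diag}(h,h^{-1})$ on $E = K^{1/2}\oplus K^{-1/2}$ and $\Phi = \varphi + \varphi^{*_H}$. Hitchin's self-duality equation $F_{\nabla^H} + [\varphi,\varphi^{*_H}] = 0$, restricted to the line bundle $K^{1/2}$ and written in a local holomorphic coordinate $z$, becomes a scalar PDE for $u := \log h$. Since the Higgs field is $\varphi = \begin{pmatrix}0 & q_2\\ 1 & 0\end{pmatrix}$, one computes $\varphi^{*_H} = \begin{pmatrix}0 & h^2\\ h^{-2}\bar q_2 & 0\end{pmatrix}$ (with the conventions in the connection formula above), so $[\varphi,\varphi^{*_H}]$ is diagonal with top entry $h^{-2}|q_2|^2 - h^2$ times the local metric factor. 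The curvature $F_{\nabla^H}$ on $K^{1/2}$ contributes a term $-2\,\partial_z\partial_{\bar z} u$ together with the curvature of $K^{1/2}$, which is $-\tfrac12 K_\Sigma$, half the Gaussian curvature term of the background metric. Putting this together, $u$ satisfies an equation of the form
\begin{gather*}
\Delta u = e^{-2u}|q_2|^2 - e^{2u} + c,
\end{gather*}
where $c$ accounts for the fixed curvature contribution (in the hyperbolic normalization $c = 1$ when $q_2 = 0$ recovers the uniformizing metric). The precise constant is not important; what matters is the sign structure.

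Next I would invoke the maximum principle. Set $f := h^{-2}|q_2|^2 = e^{-2u}|q_2|^2$; this is a smooth nonnegative function on the closed surface $\Sigma$ (it vanishes at the zeros of $q_2$). To show $f < 1$ everywhere, suppose for contradiction that $f$ attains its maximum value $M \geq 1$ at a point $p$. Away from the zeros of $q_2$ one can compute $\Delta \log f = -2\Delta u$ (the $\log|q_2|^2$ term being harmonic there), and substituting the equation for $\Delta u$ gives $\Delta \log f = -2f + 2e^{2u} - 2c = -2f + 2|q_2|^2/f \cdot (\text{something})$... — more cleanly, one rewrites in terms of $f$ and $e^{2u}$ and uses that at an interior maximum of $\log f$ one has $\Delta \log f \leq 0$ while the right-hand side, evaluated using $f \geq 1$, is strictly positive. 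This forces $q_2(p) = 0$, but then $f(p) = 0 < 1$, contradicting $M \geq 1$. Hence $f < 1$ on all of $\Sigma$, i.e. $|h^{-2}\bar q_2| < 1$. (An alternative to the contradiction argument: compare $h$ with the hyperbolic metric $h_0$ solving the $q_2 = 0$ equation via a sub/super-solution argument, then track the resulting inequality between $h$ and $h_0$ and between $|q_2|$ and the curvature form.)

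The main obstacle I expect is bookkeeping the constants and curvature normalizations correctly — in particular getting the coefficient of the background curvature term right so that the strict inequality (rather than $\leq$) genuinely comes out, and handling the locus where $q_2$ vanishes, where $\log f$ is singular and the maximum principle must be applied with a little care (e.g. by noting $f$ extends smoothly across the zeros and applying the maximum principle to $f$ directly, or by an approximation argument near the zero set). The qualitative mechanism, however, is robust: it is exactly the statement that the Higgs-bundle metric $h$ dominates what a "flat" configuration would give, which is the analytic incarnation of the stability condition $b \neq 0$ and ultimately of the Milnor--Wood bound $\deg L \leq g-1$.
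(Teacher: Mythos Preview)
Your proposal is correct and follows exactly the approach the paper indicates: the paper's proof is simply ``If $q_2=0$, this is obvious. Otherwise, it was proven by Hitchin applying the maximum principle,'' and you have sketched out precisely that maximum-principle argument from Hitchin's self-duality paper. You in fact supply considerably more detail than the paper does, and your honest flagging of the normalization bookkeeping and the handling of the zero locus of $q_2$ is appropriate---those are the only places where care is needed, and neither is a genuine obstruction.
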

\begin{proof}
If $q_2=0$, this is obvious. Otherwise, it was proven by Hitchin \cite{selfduality} applying the maximum principle.
\end{proof}

We have found a graph of a $\CP^1$-structure $\big(\bP(E),K^{\frac{1}{2}}\big)$. We denote by $D\colon \widetilde{\Sigma} \ra \CP^1$ the corresponding developing map. We can now check that the image of this map never meets $\RP^1$, this is because we wrote the real structure $\tau$ explicitly, and it is easy to check that $K^{\frac{1}{2}}$ is never in the real locus:
\begin{gather*}\tau\begin{pmatrix}
1\\0
\end{pmatrix} = \begin{pmatrix}
0\\h^{-1}
\end{pmatrix}. \end{gather*}
Hence we have a developing map
\begin{gather*}D\colon \ \widetilde{\Sigma} \ra \bH^2.\end{gather*}
The holonomy is in $\sPSL(2,\R)$ and hence this $\CP^1$-structure is actually an $\bH^2$-structure.

The map $D$ actually coincides with the harmonic map to the symmetric space coming from solving Hitchin's equations. The fact that $D$ is a local diffeo was proved by Sampson~\cite{Sampson}, Wolf~\cite{TeichOfHarmonic} and Hitchin~\cite{selfduality}. The proof given here is Hitchin's proof.

The case when $q_2=0$ is the easiest, but it is particularly interesting. Fuchsian Higgs bundles with $q_2=0$ are called \defin{uniformizing Higgs bundles}, because they give an alternative proof of a~version of the uniformization theorem. This was done by Hitchin~\cite{selfduality} with essentially the same proof we give here, but without mentioning geometric structures.

\begin{Theorem}[uniformization theorem] Every complex structure $\Sigma$ on a closed surface~$S$ admits a conformal Riemannian metric of constant curvature~$-1$.
\end{Theorem}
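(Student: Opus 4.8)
The plan is to deduce the uniformization theorem directly from the construction just carried out, using the uniformizing Higgs bundle, i.e.\ the case $q_2 = 0$. First I would fix a complex structure $\Sigma$ on the closed surface $S$ of genus $g \geq 2$ (the cases $g = 0, 1$ being classical and handled separately by the round sphere and flat tori). I would then observe that the data $E = K^{\frac{1}{2}} \oplus K^{-\frac{1}{2}}$, $\varphi = \left(\begin{smallmatrix} 0 & 0 \\ 1 & 0 \end{smallmatrix}\right)$ defines a poly-stable $\sSL(2,\R)$-Higgs bundle (it is Fuchsian with $\deg(L) = g - 1$, and the stability condition $b = 1 \neq 0$ holds). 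By the Hitchin--Simpson correspondence there is a Hermitian metric $H = \mathrm{diag}(h, h^{-1})$ solving Hitchin's equations, producing a flat $\sSL(2,\R)$-connection $\nabla$ and a reductive holonomy representation $\rho\colon \pi_1(S) \to \sPSL(2,\R)$.

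Next I would invoke the argument of Section~\ref{subsec:hyperbolic structures} verbatim: the line subbundle $K^{\frac{1}{2}} \subset E$ gives a transverse section of $\bP(E)$, hence a $\CP^1$-structure on $S$ with holonomy $\rho$, and since the real structure $\tau$ sends $\left(\begin{smallmatrix} 1 \\ 0\end{smallmatrix}\right)$ to $\left(\begin{smallmatrix} 0 \\ h^{-1}\end{smallmatrix}\right) \neq \left(\begin{smallmatrix} 1 \\ 0\end{smallmatrix}\right)$, the developing map avoids $\RP^1$ and lands in one component of $\CP^1 \setminus \RP^1 \cong \bH^2$. (When $q_2 = 0$ the relevant Lemma of Hitchin is trivial, as remarked.) This gives a developing pair $(D, \rho)$ with $D\colon \widetilde{S} \to \bH^2$ a local diffeomorphism and $\rho\colon \pi_1(S) \to \sPSL(2,\R) = \Isom^+(\bH^2)$; equivalently, an $\bH^2$-structure on $S$ in the sense of Section~\ref{sec:geometric structures}. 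Pulling back the hyperbolic metric of $\bH^2$ through the charts yields a Riemannian metric on $S$ of constant curvature $-1$, and because the charts are holomorphic for the Poincar\'e model $P\colon \bH^2 \to \CP^1$ (Example~\ref{exa:geometric manifolds}(5)), this metric is conformal to $\Sigma$.

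The step I expect to be the main obstacle — and the one that makes this a genuine theorem rather than a formal consequence — is establishing that the developing map $D$ is actually a \emph{global diffeomorphism} onto $\bH^2$, so that $(D, \rho)$ exhibits $\Sigma$ as a quotient $\bH^2/\rho(\pi_1(S))$ and the induced metric is \emph{complete}; Thurston's holonomy principle only gives that $D$ is a local diffeomorphism, and an $(X,\sG)$-structure need not be complete. For this I would identify $D$ with the $\rho$-equivariant harmonic map $\widetilde{\Sigma} \to \bH^2$ arising from the solution of Hitchin's equations, and cite the theorems of Sampson, Wolf, and Hitchin that an equivariant harmonic map into $\bH^2$ with this Higgs data is a diffeomorphism (Wolf's theorem that a harmonic diffeomorphism is determined by its Hopf differential $q_2$, here $q_2 = 0$, so $D$ is the identity-type map up to the hyperbolic structure). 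Alternatively — and this is the cleaner route for a survey — one shows $\rho$ is discrete and faithful with image a cocompact Fuchsian group by a volume/degree computation (the Euler number of $\rho$ equals $\deg(K^{\frac{1}{2}}) \cdot 2 = 2g - 2$, the maximal value, so $\rho$ is Fuchsian by Goldman's theorem quoted in Example~\ref{exa:special representations}(1)), and then a standard completeness argument for closed $(\bH^2, \sPSL(2,\R))$-manifolds forces $D$ to be a covering map onto $\bH^2$, hence a diffeomorphism. Either way, the upshot is that the conformal constant-curvature metric exists globally on all of $\Sigma$, completing the proof.
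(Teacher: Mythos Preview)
The overall setup is right and follows the paper's strategy, but there are two issues.

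First, your justification for conformality is the real gap. You write that the induced metric is conformal to $\Sigma$ ``because the charts are holomorphic for the Poincar\'e model'', but this is precisely what needs to be proved. An $\bH^2$-structure on $S$ gives charts into $\bH^2 \subset \CP^1$ related by elements of $\sPSL(2,\R)$, but there is no a priori reason these charts should be holomorphic with respect to the \emph{given} complex structure $\Sigma$; indeed, for $q_2 \neq 0$ they are not. The paper isolates exactly this point: when $q_2 = 0$ one has
\[
\nabla_{\!\!\frac{\partial}{\partial \bar z}} s \;=\; \begin{pmatrix} 0 \\ h^{-2}\bar{q_2} \end{pmatrix} \;=\; 0,
\]
which says that the section, read in a parallel frame, depends holomorphically on $z$, i.e.\ the developing map $D$ is holomorphic and hence conformal. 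This one-line computation is the entire content of the proof, and you have replaced it with an appeal to Example~\ref{exa:geometric manifolds}(5) that does not do the required work.

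Second, your final paragraph is unnecessary. The theorem asks only for a conformal Riemannian metric of constant curvature $-1$; it does not ask that $D$ be a global diffeomorphism or that the structure be complete. Once $D\colon \widetilde\Sigma \to \bH^2$ is a conformal local diffeomorphism, the pull-back of the hyperbolic metric is a $\pi_1(S)$-invariant Riemannian metric on $\widetilde\Sigma$, descends to $\Sigma$, is conformal to $\Sigma$, and has curvature $-1$ (curvature being local). The discussion of Euler numbers, Goldman's theorem, and covering arguments would only be needed for the stronger statement $\Sigma \cong \bH^2/\rho(\pi_1(S))$, which is not what is being claimed here.
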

\begin{proof}Choose a square root $K^{\frac{1}{2}}$ of the canonical bundle, and take the uniformizing Higgs bundle with that square root. The equivariant map $D$ constructed above is now conformal: to see this, notice that
\begin{gather*} \nabla_{\!\!\!\frac{\partial}{\partial \bar{z}}}s = \begin{pmatrix}
0\\h^{-2}\bar{q_2}
\end{pmatrix} = 0.\end{gather*}
Hence, the pull-back of the hyperbolic metric on $\bH^2$ is conformal, and it has curvature $-1$.
\end{proof}

\subsection{Almost-Fuchsian representations} \label{subsection:almost fuchsian}

Given a Fuchsian representation in the character variety $\Xx(\pi_1(S),\sPGL(2,\R))$, we want to deform it in $\QFuch(S) \subset \Xx(\pi_1(S),\sPGL(2,\C))$, the space of quasi-Fuchsian representations. These representations have a very interesting geometry, and they are holonomies of some very special $\CP^1$-structures called the quasi-Fuchsian $\CP^1$-structures.

\begin{Definition}
Consider a homeomorphism $f\colon \CP^1 \ra \CP^1$ that topologically conjugates the action of a Fuchsian representation with the action of a quasi-Fuchsian representation $\rho$. Then the open subset $f\big(\bH^2\big)$ is a domain of discontinuity for $\rho$, and $S = f\big(\bH^2\big)/\rho(\pi_1(S))$ is a surface with a $\CP^1$-structure which is called a \defin{quasi-Fuchsian $\CP^1$-structure}.
\end{Definition}

We would like to see the quasi-Fuchsian $\CP^1$-structures in terms of Higgs bundles, but we are not able to do this in full generality. We can see this for a special open subset of the quasi-Fuchsian representations, which is called the space of almost-Fuchsian representations. The material in this section is part of a joint work with Qiongling Li \cite{NilpotentCone}.

Let's start with a uniformizing Higgs bundle
\begin{gather*}\left(K^{\frac{1}{2}} \oplus K^{-\frac{1}{2}}, \ \begin{pmatrix}
0 & 0\\
1 & 0
\end{pmatrix} \right).\end{gather*}
We can deform this Higgs bundle for $\sSL(2,\R)$ to a Higgs bundle for $\sSL(2,\C)$ by changing the holomorphic structure of the vector bundle. Consider the vector bundle
\begin{gather*} E = K^{\frac{1}{2}} \oplus K^{-\frac{1}{2}} \end{gather*}
endowed with the following holomorphic structure:
\begin{gather*}\bar{\partial}_E = \bar{\partial} + \begin{pmatrix}
0 & 0\\
\beta & 0
\end{pmatrix},\end{gather*}
with $\beta \in \Omega^{0,1}\big(\Sigma,K^{-1}\big)$. In the formula, $\bar{\partial}$ is the standard holomorphic structure of the direct sum, which is modified by adding a correction term. Such a bundle is an extension
\begin{gather*}0 \ra K^{-\frac{1}{2}}\ra E \ra K^{\frac{1}{2}} \ra 0.\end{gather*}
These extensions are classified by the Dolbeault cohomology class $[\beta] \in H^1\big(\Sigma,K^{-1}\big)$ a space isomorphic, by Serre's duality, to the dual of the space of quadratic differentials on $\Sigma$. Diffe\-rent~$\beta$s in the same cohomology class give rise to isomorphic vector bundles. The choice of the representative $\beta$ in the class corresponds to a choice of a non-holomorphic section $K^{\frac{1}{2}} \ra E$, whose image is the non-holomorphic subbundle appearing in the direct sum.

We consider now the Higgs bundle $(E,\varphi)$, where
\begin{gather*}E = \big(K^{\frac{1}{2}} \oplus K^{-\frac{1}{2}}, \bar{\partial}_E \big), \qquad \varphi = \begin{pmatrix}
0 & 0\\
1 & 0
\end{pmatrix}. \end{gather*}
The Higgs bundles of this form are parametrized by the pair $(\Sigma,[\beta])$. For every quasi-Fuchsian representation $\rho$, there exists a lift $\bar{\rho}\colon \pi_1(S)\ra \sSL(2,\C)$ and a pair $(\Sigma,[\beta])$ such that the flat connection of the corresponding Higgs bundle has monodromy~$\bar{\rho}$ (see~\cite{SandersThesis}). This pair is not unique in general. Moreover, not all the pairs $(\Sigma,[\beta])$ give rise to a quasi-Fuchsian monodromy. It is an open problem to distinguish them:
\begin{Question}
Given a complex structure $\Sigma$, how can we characterize the classes $[\beta]{\in} H^1\!\big(\Sigma{,}K^{-1}\!\big)$ such that the flat connection of the corresponding Higgs bundle has quasi-Fuchsian monodromy?
\end{Question}

Answering this question was our initial motivation for trying to construct the quasi-Fuchsian $\CP^1$-structures using Higgs bundles, but, as explained above, we still cannot construct all of them.

Let's fix now a pair $(\Sigma,[\beta])$. To construct a $\CP^1$-structure on $\Sigma$, we need to choose a line subbundle. We choose the holomorphic subbundle $K^{-\frac{1}{2}}$, and we then have to verify the transversality conditions.

Denote by $H$ the solutions of Hitchin's equations for the corresponding Higgs bundle. We can choose the representative $\beta$ in the Dolbeault cohomology class in a way such that the non-holomorphic subbundle $K^{\frac{1}{2}}$ is $H$-orthogonal to the holomorphic subbundle $K^{-\frac{1}{2}}$. With this choice, we can write~$H$ as
\begin{gather*}H = \begin{pmatrix}
h^{-1} & 0\\
0 & h
\end{pmatrix}. \end{gather*}
We can now write the flat connection:
\begin{gather*}\nabla = d + \begin{pmatrix}
-\partial \log h & h^2 \big(\bar{1}+\bar{\beta}\big)\\
1+\beta & \partial \log h
\end{pmatrix}.
\end{gather*}

Given a local section $s$ of $K^{-\frac{1}{2}}$, we can compute the derivatives:
\begin{gather*}s = \begin{pmatrix}
0\\1
\end{pmatrix}, \qquad \nabla_{\!\!\!\frac{\partial}{\partial z}}s =
\begin{pmatrix}
h^2\bar{\beta}\\ \partial\log h
\end{pmatrix}, \qquad \nabla_{\!\!\!\frac{\partial}{\partial \bar{z}}}s =
\begin{pmatrix}
h^2\\0
\end{pmatrix}.\end{gather*}

As in the previous subsection, the transversality condition from Proposition \ref{prop:transversality condition} is equivalent to the following condition: the section $K^{-\frac{1}{2}}$ is transverse if and only if
\begin{gather*}\forall\, A,B\in \C, \qquad A \nabla_{\!\!\!\frac{\partial}{\partial z}}s + \bar{A} \nabla_{\!\!\!\frac{\partial}{\partial \bar{z}}}s + B s = 0 \qquad \Rightarrow \qquad A=B=0. \end{gather*}
Substituting, we see that the section $K^{-\frac{1}{2}}$ is transverse if and only if

\begin{gather*}\forall\, A,B\in \C, \qquad
\begin{cases}
Ah^2\bar{\beta} + \bar{A}h^{2} = 0,\\
A \ \partial\log h + B = 0
\end{cases}
\qquad \Rightarrow \qquad A=B=0.
\end{gather*}

If $A\neq 0$, the first equation is equivalent to
\begin{gather*}\frac{\bar{A}}{A} = \bar{\beta}. \end{gather*}
If $|\beta|<1$, this cannot be satisfied, hence the section is transverse. The condition $|\beta|<1$ is well known, see Uhlenbeck \cite{Uhlenbeck}:

\begin{Definition}
A representation $\rho\colon \pi_1(S)\ra \sPGL(2,\C)$ is called \defin{almost-Fuchsian} if it is the projectivization of the monodromy of the flat connection of a Higgs bundle associated with a~pair $(\Sigma,[\beta])$, with $|\beta|<1$.
\end{Definition}

Almost-Fuchsian representations are a special type of quasi-Fuchsian representations having very good analytic properties. Summarizing, we find the following:

\begin{Theorem}[Alessandrini--Li \cite{NilpotentCone}]Let $\rho\colon \pi_1(S)\ra \sPGL(2,\C)$ be an almost-Fuchsian representation corresponding to the Higgs bundle $(E,\varphi)$ defined by the pair $(\Sigma,[\beta])$. Then the holomorphic line subbundle $K^{-\frac{1}{2}} \subset E$ induces a quasi-Fuchsian $\CP^1$-structure with holonomy~$\rho$.
\end{Theorem}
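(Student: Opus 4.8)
The plan is to show that the holomorphic line subbundle $K^{-\frac{1}{2}} \subset E$ gives a transverse section of $\bP(E)$ and that the resulting $\CP^1$-structure coincides with the quasi-Fuchsian one attached to $\rho$. The transversality part has essentially been carried out already in the computation above: using Proposition~\ref{prop:transversality condition} in the form adapted to the complex coordinate $z$, the section $K^{-\frac{1}{2}}$ is transverse precisely when the system $A h^2 \bar\beta + \bar A h^2 = 0$, $A\,\partial\log h + B = 0$ forces $A=B=0$; the first equation with $A \neq 0$ reads $\bar A / A = \bar\beta$, which is impossible once $|\beta| < 1$, and this is guaranteed by the definition of an almost-Fuchsian representation (after the canonical choice of representative $\beta$ making $K^{\frac{1}{2}}$ orthogonal to $K^{-\frac{1}{2}}$ with respect to $H$). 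So the pair $\big(\bP(E), K^{-\frac{1}{2}}\big)$ is a graph of a $\CP^1$-structure on $\Sigma$ with holonomy $\rho$, with a developing map $D\colon \widetilde{\Sigma} \ra \CP^1$.

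The remaining and genuinely substantive point is that this $\CP^1$-structure is \emph{quasi-Fuchsian}. For this I would exhibit the developing map as a homeomorphism onto a topological disc bounded by a quasi-circle invariant under $\rho$. The natural route is to identify $D$ with (the projectivization of) the equivariant map into $\CP^1$ coming from the harmonic section of the associated bundle, exactly as in the Fuchsian case treated in Section~\ref{subsec:hyperbolic structures}: the line subbundle $K^{-\frac{1}{2}}$ is the one picked out by the harmonic metric, so its equivariant lift $\widetilde{\Sigma}\to\CP^1$ is the composition of the harmonic map $\widetilde{\Sigma}\to\bH^3 = \sPSL(2,\C)/\sPSU(2)$ with a suitable assignment of a point of $\CP^1$. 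The almost-Fuchsian hypothesis $|\beta|<1$ is precisely Uhlenbeck's condition (see Uhlenbeck~\cite{Uhlenbeck}) that the associated equivariant harmonic map $\widetilde{\Sigma}\to\bH^3$ is a minimal immersion with principal curvatures in $(-1,1)$; under that condition the normal exponential flow identifies $\widetilde{\Sigma}$ with one of the two components of the domain of discontinuity of $\rho$, and $\rho$ is quasi-Fuchsian with limit set a quasi-circle. One then checks that the Gauss lift / developing map of the minimal surface, read off through the bundle picture, is exactly the map $D$ built from $K^{-\frac{1}{2}}$, so that $D$ is a diffeomorphism onto $f(\bH^2)$ for the conjugating homeomorphism $f$, i.e.\ $\big(\bP(E),K^{-\frac{1}{2}}\big)$ is the quasi-Fuchsian $\CP^1$-structure with holonomy $\rho$.

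The main obstacle is this last identification: one must connect two a priori different incarnations of the ``same'' equivariant map — the one coming from the transverse section $K^{-\frac{1}{2}}$ of the flat $\CP^1$-bundle, and the Gauss map of the Uhlenbeck minimal surface in $\bH^3$ — and then invoke the almost-Fuchsian theory to conclude injectivity of $D$ and the quasi-circle property of its boundary values. Everything else (transversality, the fact that a graph determines a $\CP^1$-structure with the prescribed holonomy via the discussion in Section~\ref{sec:graph}, and the normalization of $H$ and $\beta$) is routine and has been set up in the preceding subsections; I would treat the harmonic-map input as a black box, citing Uhlenbeck~\cite{Uhlenbeck} and the earlier discussion, and present the proof as a short verification plus a reference.
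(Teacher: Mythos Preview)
Your transversality argument is exactly the paper's: the survey carries out precisely the computation you summarize, reducing to $\bar A/A=\bar\beta$ and invoking $|\beta|<1$. So on that part you and the paper agree verbatim.

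On the quasi-Fuchsian claim, the survey does not actually give a proof: it records that almost-Fuchsian representations are quasi-Fuchsian (with a pointer to Uhlenbeck~\cite{Uhlenbeck}), and then simply states the theorem, deferring the full argument to~\cite{NilpotentCone}. Your proposal goes further than the survey here, outlining how one would identify the developing map produced by $K^{-1/2}$ with a Gauss map of the Uhlenbeck minimal surface in $\bH^3$ and then use the almost-Fuchsian normal-flow picture to conclude that $D$ is a diffeomorphism onto one component of the domain of discontinuity. That is a reasonable and standard route, and it is consistent with the paper's reliance on Uhlenbeck; you are right to flag the identification of the two incarnations of the equivariant map as the genuine content to be checked. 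In short: your approach matches the paper where the paper argues, and supplies a plausible completion where the paper only cites.
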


\subsection{Convex real projective structures}

\begin{Definition}An $\RP^2$-structure on a closed surface $S$ is said to be a \defin{convex $\RP^2$-structure} if the developing map
\begin{gather*}D\colon \ \widetilde{S}\ra \RP^2 \end{gather*}
is a diffeomorphism with an open convex subset of $\RP^2$.
\end{Definition}

Examples of convex $\RP^2$-structures were given in Example \ref{exa:geometric manifolds}, where we have seen that every $\bH^2$-structure on $S$ produces such an $\RP^2$-structure via the Klein model.

The subset of $\Dd_{\RP^2}(S)$ consisting of convex real projective structures will be denoted by $\Dd^{\mathrm{conv}}_{\RP^2}(S)$. The holonomy of these structures is always reductive, hence we have
\begin{gather*}\Hol\colon \ \Dd^{\mathrm{conv}}_{\RP^2}(S) \ra \Xx(\pi_1(S), \sPGL(3,\R)) ).\end{gather*}

Goldman \cite{GoldmanConvex} proved that $\Dd^{\mathrm{conv}}_{\RP^2}(S)$ is connected, hence the image of $\Hol$ lies in the Hitchin component $\Hit(S,3)$. Choi--Goldman~\cite{ChoiGoldmanRP2} proved that $\Hol$ gives a homeomorphism between $\Dd^{\mathrm{conv}}_{\RP^2}(S) $ and $\Hit(S,3)$. This gives a nice geometric interpretation of the Hitchin component as the parameter space of convex $\RP^2$-structures on the surface.

In Baraglia's thesis~\cite{BaragliaThesis}, he shows how to see these convex $\RP^2$-structures using Higgs bundles. Every $\rho\in \Hit(S,3)$ admits a lift to a representation $\bar{\rho}\colon \pi_1(S)\ra \sSL(3,\R)$. By a theorem of Loftin~\cite{AffSpheresConvexRPn} and Labourie~\cite{LabourieCubic}, there exists a complex structure $\Sigma$ and a cubic differential $q_3 \in H^0\big(\Sigma,K^3\big)$ such that the representation $\bar{\rho}$ is the monodromy of the flat connection of the Higgs bundle $(E,\varphi)$, where
\begin{gather*} E = K \oplus \Oo \oplus K^{-1}, \qquad \varphi =
\begin{pmatrix}
0 & 0 & q_3\\
1 & 0 & 0\\
0 & 1 & 0
\end{pmatrix}. \end{gather*}

Baraglia \cite{BaragliaThesis} proved that the solution $H$ of Hitchin's equations for this Higgs bundle is diagonal:
\begin{gather*}H = \begin{pmatrix}
h^{-1} & 0 & 0\\
0 & 1 & 0\\
0 & 0 & h
\end{pmatrix}.\end{gather*}

To construct an $\RP^2$-structure on $\Sigma$, we can choose the section given by the line subbundle $\Oo$. We can verify in the usual way that this section is transverse, hence it gives an $\RP^2$-structure, which can be checked to be convex.

When $q_3=0$, the representation takes values in $\sSO(1,2)$, and the convex set is precisely an ellipsoid, the Klein model of the hyperbolic plane.

\section{Higher-dimensional manifolds} \label{sec:higher dimension}

One limitation of the method described in the previous section is that Higgs bundles can only describe flat bundles on surfaces. We would like to apply similar methods to construct geometric structures on higher-dimensional manifolds, but we need to find a good way to describe the flat bundle. This is possible in some special cases, when the representation factors through a surface group.

\subsection{Sections of the holonomy map} \label{subsec:geometric interpretation of characters}

Let $N$ be a closed manifold and $\sG$ be a reductive Lie group. Consider the character variety
\begin{gather*}\Xx(\pi_1(N),\sG). \end{gather*}
Sometimes, it is possible to find special open subsets $\Uu \subset \Xx(\pi_1(N),\sG)$ which parametrize geometric structures on $N$. To give a meaning to this, we first need to find a manifold $X$ with a transitive and effective action of $\sG$ and $\dim(X)=\dim(N)$. Consider then the holonomy map
\begin{gather*}\Hol\colon \ \Dd^*_{(X,\sG)}(N) \ra \Xx(\pi_1(N),\sG).\end{gather*}
We want to find an open subset $\Uu \subset \Xx(\pi_1(N),\sG)$ and a map
\begin{gather*}T\colon \ \Uu \ra \Dd^*_{(X,\sG)}(N)\end{gather*}
such that $\Hol \circ T = \Id_\Uu$. Such a map $T$ is a section of the holonomy map on $\Uu$. Finding such a $T$ gives a geometric interpretation to the open subset $\Uu$: it becomes a parameter space for a~special subset of $(X,\sG)$-structures on $N$.

\begin{Example}In the previous section, we have seen some very interesting examples of this construction:
\begin{gather*}
\Xx(\pi_1(S), \sPGL(2,\R)) \supset \Hit(S,2)\ra \Dd_{\bH^2}(S) = \Tt(S),\\
\Xx(\pi_1(S), \sPGL(2,\C)) \supset \QFuch(S) \ra \Dd_{\CP^1}(S),\\
\Xx(\pi_1(S), \sPGL(3,\R)) \supset \Hit(S,3)\ra \Dd^{\mathrm{conv}}_{\RP^2}(S) \subset \Dd_{\RP^2}(S).
\end{gather*}
\end{Example}

If we want to find more examples like these, the hypothesis that $\dim(X)=\dim(N)$ becomes a~serious problem: for some groups $\sG$ we don't have homogeneous spaces of the correct dimension. To relax this condition, we will look for geometric structures on another closed manifold $M$. At this point, we don't even need that $N$ is a manifold: the role of $\pi_1(N)$ will be played by a finitely generated group $\Gamma$. Consider the character variety
\begin{gather*}\Xx(\Gamma,\sG). \end{gather*}

We want to use an open subset of it to parametrize $(X,\sG)$-structures on a~closed manifold $M$ (with $\dim(M)=\dim(X)$) which is related with $\Gamma$ by a~group homomorphism $\alpha\colon \pi_1(M) \ra \Gamma$. This group homomorphism induces a map
\begin{gather*}\alpha^*\colon \ \Xx(\Gamma,\sG) \ni \rho \ra \rho \circ \alpha \in \Xx(\pi_1(M),\sG).\end{gather*}
We want to find an open subset $\Uu \subset \Xx(\Gamma,\sG)$ and a map
\begin{gather*}T\colon \ \Uu \ra \Dd^*_{(X,\sG)}(M)\end{gather*}
such that $\Hol \circ T = \alpha^*|_\Uu$. Finding such a map $T$ gives a geometric interpretation to the open subset $\Uu$ as a parameter space for a special subset of $(X,\sG)$-structures on $M$.

Many examples of this scenario come from the theory of domains of discontinuity for Anosov representation in geometries of parabolic type (see the discussion at the end of Section~\ref{subsec:morphisms}, Guichard and Wienhard \cite{GWDomainsofDiscont} and Kapovich, Leeb and Porti \cite{KLPAnosov1}). Assume that $\sG$ is semi-simple, $\sP \subset \sG$ is a parabolic subgroup, $\Gamma$ is Gromov-hyperbolic and torsion-free, $\Uu$ is a connected component of $\sP\text{-}\Anosov(\pi_1(S),\sG)$. Then, we need to choose a geometry $(X,\sG)$ of parabolic type which is in a special relation with $\sP$, in a way that the theory of domains of discontinuity guarantees the existence of a co-compact domain of discontinuity $\Omega_\rho \subset X$ for all the $\sP$-Anosov representations. Under these hypotheses, the topology of the manifold $M=\Omega_\rho/\rho(\Gamma)$ does not depend on $\rho$, and the map
\begin{gather*}T\colon \ \Uu \ni \rho \ra \Omega_\rho/\rho(\Gamma) \in \Dd^*_{(X,\sG)}(M) \end{gather*}
has all the properties listed above. In this way, we give a geometric interpretation to many open connected subsets of Anosov representations as parametrizing $(X,\sG)$-structures on a closed manifold $M$. For an example where these hypotheses are satisfied, see Section~\ref{subsec:dod}.

Even if we know the group $\Gamma$, we usually have no idea what the topology of $M$ is:

\begin{Question} \label{question:topology of M} For some connected component $\Uu$ of $\sP\text{-}\Anosov(\pi_1(S),\sG)$, understand the map
\begin{gather*}T\colon \ \Uu \ra \Dd^*_{(X,\sG)}(M). \end{gather*}
The first step is to determine the topology of $M$.
\end{Question}

\subsection{Transverse maps and transverse submanifolds}

Let $\Gamma$ be a finitely generated group, and $\rho\colon \Gamma \ra \sG$ a representation.

\begin{Definition}Let $M$ be a manifold. A representation $\bar{\rho}\colon \pi_1(M) \ra \sG$ \defin{factors through $\rho$} if there exists a group homomorphism $\alpha\colon \pi_1(M) \ra \Gamma$ such that $\bar{\rho} = \rho \circ \alpha$.
\end{Definition}

Assume now that $\Gamma = \pi_1(N)$ for some manifold $N$. If $N$ is aspherical (for example, if $N$ is a surface), then every group homomorphism $\alpha\colon \pi_1(M) \ra \pi_1(N)$ is induced by a smooth map $f\colon M \ra N$, i.e., $\alpha = f_*$. If $N$ is not aspherical, this is not automatic.

\begin{Definition}
Let $\rho\colon \pi_1(N) \ra \sG$ be a representation. A representation $\bar{\rho}\colon \pi_1(M) \ra \sG$ \defin{strongly factors through $\rho$} if there exists a smooth map $f\colon M \ra N$ such that $\bar{\rho} = \rho \circ f_*$.
\end{Definition}

In this case, the representation $\bar{\rho}$ is the monodromy of a flat bundle $\bar{p}\colon \bar{B} \ra M$, with fiber $X$, and $\rho$ is the monodromy of a flat bundle $p\colon B\ra N$, with fiber $X$, where $(X,\sG)$ is a geometry. The former bundle is isomorphic to the pull-back of the latter by the map~$f$:
\begin{gather*} \bar{B} = f^* B.\end{gather*}

Consider the following commutative diagram:
\begin{gather*}\begin{matrix}
 \bar{B} & \stackrel{f_+}{\lra} & B \\
\scriptstyle{\bar{p}}\Big\downarrow \ & & \scriptstyle{p}\Big\downarrow \ \\
 M & \stackrel{f}{\lra} & N.
\end{matrix} \end{gather*}

\begin{Proposition}
There is a homeomorphism between $\Gamma(M,\bar{B})$, the space of smooth sections of $\bar{B}$, and the space of smooth functions $s\colon M \ra B$ satisfying $p \circ s = f$, endowed with the $C^\infty$ topology. The homeomorphism is given by
\begin{gather*}\Gamma(M,\bar{B}) \ni \bar{s} \ra s = \bar{s} \circ f_+ \in C^\infty(M,B), \end{gather*}
where $f_+\colon \bar{B} \ra B$ is the map given by the pull-back.
\end{Proposition}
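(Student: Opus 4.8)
The plan is to unwind the definition of the pull-back bundle $\bar B = f^*B$ and exhibit the claimed map explicitly on both sides, then check it is a continuous bijection with continuous inverse. Recall that, as a set, the pull-back is $f^*B = \{(m,b) \in M \times B \;|\; f(m) = p(b)\}$, with $\bar p(m,b) = m$ and with the canonical map $f_+\colon \bar B \ra B$ given by $f_+(m,b) = b$. A smooth section $\bar s \in \Gamma(M,\bar B)$ is then a map $m \mapsto (m, \sigma(m))$ where $\sigma\colon M \ra B$ is smooth and satisfies $p(\sigma(m)) = f(m)$, i.e.\ $p \circ \sigma = f$. So the assignment $\bar s \mapsto s := f_+ \circ \bar s = \sigma$ already lands in $\{s \in C^\infty(M,B) \;|\; p \circ s = f\}$, and this is visibly the content of the statement.

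The key steps, in order, are: (1) write down the set-level description of $f^*B$, $\bar p$ and $f_+$ as above; (2) observe that a section $\bar s$ of $\bar p$ is the same datum as its second component $\sigma = f_+ \circ \bar s$, and that the constraint $\bar p \circ \bar s = \Id_M$ is automatic while the constraint that $\bar s(m) \in \bar B$ translates exactly into $p \circ \sigma = f$; (3) conversely, given any smooth $s\colon M \ra B$ with $p \circ s = f$, define $\bar s(m) = (m, s(m))$, check it is smooth (the target $\bar B \subset M \times B$ is an embedded submanifold and $m \mapsto (m,s(m))$ is smooth into $M \times B$ with image in $\bar B$) and that it is a section of $\bar p$; (4) note these two constructions are mutually inverse, so we have a bijection $\Gamma(M,\bar B) \leftrightarrow \{s \in C^\infty(M,B) : p \circ s = f\}$; (5) check both directions are continuous for the $C^\infty$ topologies — one direction is postcomposition with the smooth map $f_+$, hence continuous; the other is $s \mapsto (\Id_M, s)$ followed by corestriction to the submanifold $\bar B$, which is likewise continuous. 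This can be verified locally over charts of $M$ and local trivializations of $B$ and $\bar B$, where everything becomes a statement about maps into $X$.

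The only mild subtlety — and the closest thing to an obstacle — is smoothness of the inverse map, i.e.\ checking that $m \mapsto (m,s(m))$ really is a smooth map \emph{into the manifold} $\bar B = f^*B$, not merely into $M \times B$. This follows because $f^*B$ is an embedded submanifold of $M \times B$ (it is the preimage of the diagonal under $(f \circ \bar p, p)$, transversality of which is what makes the pull-back a manifold in the first place) and a smooth map into the ambient manifold whose image lies in an embedded submanifold is smooth into the submanifold. Equally, one can simply work in local trivializations: over a trivializing open set $U \subset N$ with $p^{-1}(U) \cong U \times X$, the set $f^{-1}(U)$ trivializes $\bar B$ as $f^{-1}(U) \times X$, and under these identifications $f_+$ is just $(m,x) \mapsto (f(m),x)$, a section of $\bar B$ over $f^{-1}(U)$ is a smooth map $f^{-1}(U) \ra X$, and a map $s\colon f^{-1}(U) \ra U \times X$ with $p \circ s = f$ is exactly a pair $(f, \text{smooth map to } X)$; the correspondence and its bicontinuity are then transparent, and the local pieces glue because all identifications are compatible on overlaps.
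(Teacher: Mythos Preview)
The paper states this proposition without proof; it is treated as a standard fact about pull-back bundles. Your argument is correct and complete: the explicit description of $f^*B \subset M \times B$, the identification of a section $\bar s$ with its second component $\sigma = f_+ \circ \bar s$, the inverse $s \mapsto (\Id_M, s)$, and the verification of bicontinuity in local trivializations are exactly the standard proof. You also implicitly correct a typo in the displayed formula: the map should read $s = f_+ \circ \bar s$, as you write, since $\bar s \colon M \to \bar B$ and $f_+ \colon \bar B \to B$; the printed $\bar s \circ f_+$ does not type-check.
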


Now let's change perspective and think that we don't know the map $f\colon M\ra N$ in advance. Let's just start from a map $s\colon M \ra B$. From $s$, we can construct a~map $f = p \circ s\colon M \ra N$, a~representation $\bar{\rho} = \rho \circ f_*$, a flat bundle $\bar{B} = f^* B$ and a section $\bar{s} \in \Gamma(M,f^* B)$. We will call the section $\bar{s}$ the \defin{tautological section}, because it is just a~reinterpretation of the map $s$ as section of a bundle.

\begin{Definition}We will say that a smooth map $M \ra B$ is a \defin{transverse map} if it is transverse to the parallel foliation of the flat bundle $B$.
\end{Definition}

\begin{Proposition}The map $s$ is a transverse map if and only if the tautological section $\bar{s} \in \Gamma(M,f^* B)$ is a transverse section.
\end{Proposition}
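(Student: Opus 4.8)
The plan is to reduce the statement to a purely local, pointwise computation about differentials, exactly in the spirit of the proof of the earlier Proposition relating transverse sections to immersions/submersions. First I would observe that both notions in question — $s\colon M\to B$ being transverse to the parallel foliation $\mathcal{F}_B$ of $B$, and $\bar{s}\in\Gamma(M,f^*B)$ being transverse to the parallel foliation $\mathcal{F}_{\bar{B}}$ of $\bar{B}=f^*B$ — are local conditions on $M$, so it suffices to check the equivalence at an arbitrary point $m\in M$. Fix $m$, put $n=f(m)\in N$, and let $b=s(m)=\bar{s}(m)\circ(\text{identification})$ denote the common image point in the fiber, using that $f_+\colon\bar{B}\to B$ restricts to an isomorphism on fibers $\bar{p}^{-1}(m)\xrightarrow{\sim}p^{-1}(n)$ and that $s=\bar{s}\circ f_+$.

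The key structural fact is that $f_+$ carries the parallel foliation of $\bar{B}=f^*B$ to the parallel foliation of $B$: this is immediate from the construction of the pull-back flat structure, since in $f$-compatible charts the transition functions of $\bar{B}$ are the compositions of those of $B$ with $f$, so local leaves of $\mathcal{F}_{\bar{B}}$ are exactly the $f_+$-preimages of local leaves of $\mathcal{F}_B$. Consequently, at the point $b$ we have a splitting of $T_bB$ into the tangent space $V_b$ to the leaf of $\mathcal{F}_B$ through $b$ (the "vertical/parallel" direction, which projects isomorphically nowhere but rather is the graph of a flat connection) and the differential $d\bar{p}$ identifies the complementary directions; more precisely $dp$ restricted to $V_b$ is an isomorphism onto $T_nN$, and the analogous statement holds for $\bar{p}$ on $\mathcal{F}_{\bar{B}}$. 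Now transversality of $s$ at $m$ means $ds_m(T_mM)+V_b=T_bB$, and transversality of $\bar{s}$ at $m$ means $d\bar{s}_m(T_mM)+V_{\bar{b}}=T_{\bar{b}}\bar{B}$. Applying the linear isomorphism $d(f_+)_{\bar{b}}\colon T_{\bar{b}}\bar{B}\to T_bB$, which sends $V_{\bar{b}}$ onto $V_b$ and sends $d\bar{s}_m$ to $ds_m$ (by the chain rule applied to $s=\bar{s}\circ f_+$, together with the fact that $\bar{s}$ is a section so $d\bar{p}\circ d\bar{s}_m=\Id$), the two conditions become literally the same equation in $T_bB$. This gives the equivalence at $m$, hence everywhere.

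The one subtlety to handle carefully is the identification of tangent spaces under the pull-back and the precise meaning of "$d(f_+)$ sends $d\bar{s}_m$ to $ds_m$": because $\bar{s}$ is a section over $M$ while $s$ is a map $M\to B$ covering $f$, one must check that the section condition $\bar{p}\circ\bar{s}=\Id_M$ together with $p\circ f_+=f\circ\bar{p}$ forces $p\circ s=f$ and makes the comparison of differentials work out without a spurious contribution from the base. This is the main (though still routine) obstacle; once the diagram is chased correctly it is a one-line consequence of the chain rule. I would then remark that, exactly as in the earlier proposition, when $\dim M\le\dim X$ this transversality is equivalent to $s$ (resp.\ $\bar{s}$, via its associated equivariant map) being an immersion, and when $\dim M\ge\dim X$ to it being a submersion, so the proposition is the natural bundle-theoretic counterpart of that earlier result.
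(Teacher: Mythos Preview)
The paper states this proposition without proof, treating it as essentially immediate from the definitions. Your overall strategy---reduce to a pointwise check in local trivializations and compare differentials---is the right one, and the final remark linking this to the earlier immersion/submersion proposition is apt.

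There is, however, a genuine gap. You assert that $d(f_+)_{\bar b}\colon T_{\bar b}\bar B\to T_bB$ is a \emph{linear isomorphism} and that it carries $V_{\bar b}$ (the tangent to the parallel leaf in $\bar B$) \emph{onto} $V_b$. Neither is true in general. Since $\dim\bar B=\dim M+\dim X$ and $\dim B=\dim N+\dim X$, the map $d(f_+)$ can only be an isomorphism when $\dim M=\dim N$; but the whole point of this subsection is precisely the case $\dim M>\dim N$. Likewise, in local product coordinates $f_+$ is $(m,x)\mapsto(f(m),x)$, so $d(f_+)$ restricted to the leaf direction is $df_m\colon T_mM\to T_nN$, which need not be surjective. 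Thus the step ``apply the linear isomorphism $d(f_+)$ and the two transversality equations become literally the same'' fails.

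The fix is short. In a flat trivialization write $\bar s(m')=(m',\sigma(m'))$ and $s(m')=(f(m'),\sigma(m'))$ with the same fiber map $\sigma\colon U\to X$; this is exactly what ``$f_+$ is the identity on fibers'' buys you. Then
\[
d\bar s_m(T_mM)+\big(T_mM\times\{0\}\big)=T_mM\times T_xX
\quad\Longleftrightarrow\quad d\sigma_m\ \text{is surjective onto }T_xX,
\]
and the identical computation shows that transversality of $s$ is \emph{also} equivalent to surjectivity of $d\sigma_m$. So both conditions reduce to the same condition on the fiber component of the differential, and the equivalence follows. (A minor side remark: the composition should read $s=f_+\circ\bar s$, not $\bar s\circ f_+$; the paper contains the same slip, and your chain-rule usage is in fact consistent with the correct order.)
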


We can summarize the constructions in this subsection with the following proposition.

\begin{Proposition}Let $\rho\colon \pi_1(N) \ra \sG$, and $B$ be a flat bundle with holonomy $\rho$ and fiber $X$, where $(X,\sG)$ is a geometry. Every $(X,\sG)$-structure on some manifold $M$ with holonomy that strongly factors through $\rho$ comes from a transverse map $M \ra B$.
\end{Proposition}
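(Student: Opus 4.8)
The plan is to unwind the definitions and assemble the pieces already established in this section. Suppose $M$ carries an $(X,\sG)$-structure whose holonomy $\bar\rho\colon\pi_1(M)\ra\sG$ strongly factors through $\rho\colon\pi_1(N)\ra\sG$, so by definition there is a smooth map $f\colon M\ra N$ with $\bar\rho=\rho\circ f_*$. First I would recall that an $(X,\sG)$-structure on $M$ is recorded by its developing pair $(D,\bar\rho)$, where $D\colon\widetilde M\ra X$ is a $\bar\rho$-equivariant local diffeomorphism; by the Proposition identifying $\Gamma(M,\bar B)$ with $\Equiv(\bar\rho,X)$, this $D$ corresponds to a section $\bar s\in\Gamma(M,\bar B)$ of the flat bundle $\bar B$ over $M$ with holonomy $\bar\rho$ and fiber $X$. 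Since $\dim(M)=\dim(X)$ and $D$ is a local diffeomorphism, the transversality Proposition tells us that $\bar s$ is a transverse section of $\bar B$.

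Next I would identify $\bar B$ with a pull-back. Because $\bar\rho=\rho\circ f_*$ and $B$ is a flat bundle over $N$ with holonomy $\rho$ and fiber $X$, the flat bundle $f^*B$ over $M$ has holonomy $\rho\circ f_*=\bar\rho$; by Proposition~\ref{prop:monodromy} (the monodromy map is a bijection, so a flat bundle is determined up to isomorphism by its holonomy), we get $\bar B\cong f^*B$ as flat bundles with fiber $X$. Under this identification, the Proposition relating $\Gamma(M,\bar B)$ with $\{s\colon M\ra B\mid p\circ s=f\}$ converts the transverse section $\bar s$ into a smooth map $s\colon M\ra B$ with $p\circ s=f$, and in particular $f=p\circ s$ is recovered as claimed. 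Finally, the Proposition characterizing transverse maps says that $s$ is a transverse map exactly when its tautological section is transverse; since that tautological section is precisely $\bar s$, which we already know is transverse, $s$ is a transverse map $M\ra B$. This exhibits the given $(X,\sG)$-structure as coming from a transverse map, completing the argument.

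The only genuine subtlety — and the step I would be most careful about — is the compatibility of all the identifications: one must check that the developing map $D$, when transported through the homeomorphism $\Gamma(M,\bar B)\cong\Equiv(\bar\rho,X)$, the isomorphism $\bar B\cong f^*B$, and the homeomorphism $\Gamma(M,f^*B)\cong\{s\colon M\ra B\mid p\circ s=f\}$, produces a map $s$ whose associated $(X,\sG)$-developing pair is again $(D,\bar\rho)$, so that the $(X,\sG)$-structure constructed from the transverse map $s$ is literally the original one rather than merely an abstractly isomorphic structure. This is routine given the explicit descriptions of the bundles $\bar B$, $f^*B$, and $X_\rho$ in terms of quotients of $\widetilde M\times X$ and $\widetilde N\times X$, but it is the place where a proof could quietly go wrong, so I would spell out the chain of identifications rather than treat it as automatic.
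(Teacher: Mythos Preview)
Your argument is correct and matches the paper's approach: the paper presents this proposition explicitly as a summary of the constructions in the subsection (``We can summarize the constructions in this subsection with the following proposition'') and gives no separate proof. You have simply unpacked those constructions carefully --- the correspondence between developing pairs and transverse sections, the identification $\bar B\cong f^*B$ via Proposition~\ref{prop:monodromy}, and the equivalence between transverse maps and transverse tautological sections --- which is exactly what the paper intends the reader to do.
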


From this proposition we see that we can construct geometric structures on a manifold of higher dimension than $N$, by only understanding the parallel foliation of a flat bundle over $N$.

Recall that, when $\dim(M) = \dim(X)$, transverse maps are always immersions. An interesting special case is when the transverse map is an embedding. In that case we can confuse the map with its image, a submanifold of $B$. Even more interesting is the case when the submanifold is a subbundle of $B$ (not necessarily with structure group $\sG$).

\begin{Definition}A \defin{transverse submanifold of $B$} is a submanifold that is transverse to the parallel foliation of $B$. A \defin{transverse subbundle} is a subbundle that is a transverse submanifold.
\end{Definition}

Transverse submanifolds and transverse subbundles of $B$ can be constructed without any a~priori knowledge of $M$, for example as zero loci of systems of equations defined on $B$. In the case of a transverse subbundle, we can hope to get an explicit description of its topology.

This discussion suggests a reformulation of Question~\ref{question:topology of M}:

\begin{Question}
With the notation of Question \ref{question:topology of M}, add the hypothesis that $\Gamma=\pi_1(N)$ is the fundamental group of a~closed aspherical manifold $N$. Is it true that $M$ is homeomorphic to a~fiber bundle over~$N$?
\end{Question}

This question generalizes a conjecture by Dumas and Sanders:

\begin{Conjecture}[Dumas--Sanders \cite{DumasSanders}] \label{conj:dumas sanders}Let $\sG$ be a simple complex Lie group and $\rho\colon \pi_1(S)\ra \sG$ be a quasi-Hitchin representation. Consider a geometry $(X,\sG)$ of parabolic type, and assume that~$\rho$ has a co-compact domain of discontinuity $\Omega \in X$ coming from the construction of Kapovich, Leeb and Porti~{\rm \cite{KLPAnosov1}}. Then the manifold $\Omega/\rho(\pi_1(S))$ admits a continuous fiber bundle map to the surface~$S$.
\end{Conjecture}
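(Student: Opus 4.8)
A natural strategy is to reduce the statement, first to the construction of a single equivariant map and then to a case that is already understood. Fix a hyperbolic structure on $S$, so that $\widetilde{S}$ is identified with $\bH^2$, the action of $\Gamma=\pi_1(S)$ on $\widetilde S$ is a Fuchsian representation $j$, and $\partial_\infty\Gamma$ is identified with $\partial\bH^2$. A continuous fiber bundle map $\Omega/\rho(\Gamma)\ra S$ is the same datum as a continuous map $\widehat F\colon\Omega\ra\bH^2$ that is $(\rho,j)$-equivariant, i.e.\ $\widehat F(\rho(\gamma)x)=j(\gamma)\widehat F(x)$, and that is a locally trivial fibration; since $\Gamma$ acts freely and properly discontinuously on $\Omega$, such an $\widehat F$ descends to the quotient. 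So the whole problem is to produce $\widehat F$ with these two properties.

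To build $\widehat F$ directly, I would use the Anosov boundary curve. A quasi-Hitchin representation is $\sB$-Anosov, hence comes with a continuous boundary map $\xi\colon\partial_\infty\Gamma\ra\sG/\sB$ which is transverse, dynamics-preserving, and topologically conjugate to the Fuchsian boundary curve. The Kapovich--Leeb--Porti domain $\Omega\subset X$ is the complement of the union of the ``thickenings'' of the points $\xi(t)$, taken over $t\in\partial_\infty\Gamma$, for a suitable ``fat'' subset of relative positions. For $x\in\Omega$ one associates the subset $\Lambda(x)\subset\partial_\infty\Gamma$ of boundary points that are extremal for $x$ with respect to a natural continuous relative-position function $d(x,\cdot)$ on $\partial_\infty\Gamma$; because $x$ avoids every thickening, $d(x,\cdot)$ is everywhere non-degenerate. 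The key lemma would be that for every $x\in\Omega$ the set $\Lambda(x)$ has a fixed simple shape --- for instance the endpoint set of a geodesic, or a ``visual'' subset admitting a canonical barycenter --- so that one may define $\widehat F(x)$ to be the corresponding point of $\bH^2$. This makes $\widehat F$ continuous and equivariant, and local triviality would follow from cocompactness of the $\Gamma$-action on $\Omega$ (so that $\Omega/\rho(\Gamma)\ra S$ is proper) together with a sliding argument coming from the continuous variation of $\Lambda$.

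The main obstacle is precisely this key lemma. Controlling $\Lambda(x)$ for \emph{every} $x\in\Omega$, uniformly over all simple complex $\sG$, all parabolic geometries $(X,\sG)$, and the full Kapovich--Leeb--Porti thickening, forces one to confront the Weyl-group combinatorics of the ``balanced'' relative-position conditions together with the contraction dynamics of the Anosov curve, and in this generality it is genuinely delicate; this is why the statement is a conjecture and is currently known only in special cases such as the convex foliated projective structures and the anti-de~Sitter structures discussed in Sections~\ref{sec:circle bundles} and~\ref{sec:higher dimensions}.

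For this reason I would complement the direct approach with a deformation argument that moves the difficulty onto the Hitchin locus, where Higgs bundles give explicit control. By definition $\QHit$ is the connected component of $\sB\text{-}\Anosov(\pi_1(S),\sG)$ containing the Hitchin component of the split real form, and along connected components of the Anosov locus the co-compact Kapovich--Leeb--Porti domains form a continuous family with locally constant topology of $\Omega_\rho/\rho(\Gamma)$. One then argues that the set of $\rho\in\QHit$ for which $\Omega_\rho/\rho(\Gamma)$ admits a fiber bundle map to $S$ is open and closed: open because the fibration map is a transverse map in the sense introduced above and transversality is an open condition, so the map can be carried along a deformation; closed because a limit of fibrations over the fixed compact base $S$ is again a fibration. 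This reduces the conjecture to the Hitchin case, where the desired equivariant map $\bH^2=\widetilde S\ra X$ (with image a transverse submanifold, or subbundle, of $X$) is built from the harmonic metric solving Hitchin's equations and the fibration property is verified by transversality computations in the spirit of Proposition~\ref{prop:transversality condition}.
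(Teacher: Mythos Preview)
The statement you are attempting to prove is labelled a \emph{Conjecture} in the paper, and the paper explicitly says it ``is still open in general, and is the subject of active research.'' There is no proof in the paper to compare against; what the paper does contain is a collection of partial results (Theorems~\ref{thm:topology of bundle} and~\ref{thm:dod} and the two theorems at the end of Section~\ref{subsec:dod}) covering only certain pairs $(\sG,X)$, obtained essentially by the second strategy you sketch: use constancy of the topology of $\Omega_\rho/\rho(\pi_1(S))$ along the quasi-Hitchin component to reduce to a single Fuchsian representation, and there exhibit an explicit transverse subbundle using Higgs bundles.

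Your proposal therefore does not close the gap, and the two places where it fails are precisely the places the paper cannot get past either. First, in your deformation argument the ``closed'' step is unjustified: a $C^0$-limit of fibrations over $S$ need not be a fibration (fibers can degenerate), so the open-and-closed scheme does not go through as stated. The correct reduction is simpler --- local constancy of the diffeomorphism type along a connected Anosov component already means it suffices to treat a single $\rho$ --- but this only shifts the burden. Second, and more seriously, your final sentence assumes that at the Fuchsian point ``the desired equivariant map \dots\ is built from the harmonic metric solving Hitchin's equations and the fibration property is verified by transversality computations.'' This is exactly what is \emph{not} known in general: the explicit transverse subbundle has only been written down and checked for $\sG=\sPGL(2n,\C)$ acting on $\CP^{2n-1}$ (and even there the identification with the Kapovich--Leeb--Porti quotient is verified only for $n\le 63$), and for a handful of other flag geometries by separate ad hoc arguments. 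For an arbitrary simple complex $\sG$ and an arbitrary parabolic $X$ no such construction is available, and producing one is the content of the conjecture. Your first approach via a barycenter of $\Lambda(x)\subset\partial_\infty\Gamma$ runs into the same wall, as you yourself note: there is no known uniform description of the ``shape'' of $\Lambda(x)$ across all balanced thickenings.
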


We can prove this conjecture in some special cases, see Section~\ref{subsec:dod}.

In the following we will assume that $N$ is a closed surface. In this case we can use Higgs bundles to describe the bundle $B$ and construct geometric structures on higher-dimensional manifolds by finding transverse subbundles of~$B$.

\section{Geometric structures on circle bundles over surfaces} \label{sec:circle bundles}

We will now show examples where we can construct transverse subbundles which are $3$-dimen\-sio\-nal manifolds. In this case they are circle bundles over surfaces. The case of manifolds of dimension higher than $3$ is more complicated, and it will be treated in the next section.

\subsection{Convex-foliated real projective structures}

Let's consider the Hitchin component
\begin{gather*}\Hit(S,4) \subset \Xx(\pi_1(S),\sPGL(4,\R)).\end{gather*}
We can here see an example of the ideas explained in Section~\ref{subsec:geometric interpretation of characters} about how to interpret this component as parameter space for a special subset of $\RP^3$-structures on $T^1 S$, the unit tangent bundle of the surface.

Guichard--Wienhard \cite{convexfoliatedprojective} proved that every $\rho\in\Hit(S,4)$ has a co-compact domain of discontinuity $\Omega_\rho \subset \RP^3$ which has two connected components $\Omega_\rho = \Omega^+_\rho \cup \Omega^-_\rho$. One of the two (say $\Omega^+_\rho$) has the property that the quotient $\Omega_\rho^+/\rho(\pi_1(S))$ is homeomorphic to $T^1 S$, and that its $\RP^3$-structure is of a special type, called a convex foliated $\RP^3$-structure.

This gives a map
\begin{gather*}\Hit(S,4) \ra \Dd_{(X,\sG)}\big(T^1 S\big) \end{gather*}
that they prove to be a homeomorphism onto a connected component of $\Dd_{(X,\sG)}\big(T^1 S\big) $ containing exactly all convex foliated $\RP^3$-structures on $T^1 S$ (\cite{convexfoliatedprojective}).

We can construct some of these structures using Higgs bundles. In this way we can see some properties of these structures that where not known before.

Every $\rho\in \Hit(S,4)$ admits a lift to a representation $\bar{\rho}\colon \pi_1(S)\ra \sSL(4,\R)$. By a theorem of Labourie \cite{LabourieEnergy}, there exists a complex structure $\Sigma$, a square root $K^{\frac{1}{2}}$ of the canonical bundle and differentials $q_3$, $q_4$ with $q_3 \in H^0\big(\Sigma,K^3\big)$ and $q_4 \in H^0\big(\Sigma,K^4\big)$ such that the representation $\bar{\rho}$ is the monodromy of the flat connection of the following Higgs bundle:
\begin{gather*} E = K^{\frac{3}{2}} \oplus K^{\frac{1}{2}} \oplus K^{-\frac{1}{2}} \oplus K^{-\frac{3}{2}}, \qquad \varphi =
\begin{pmatrix}
0 & 0 & q_3 & q_4\\
1 & 0 & 0 & q_3\\
0 & 1 & 0 & 0\\
0 & 0 & 1 & 0
\end{pmatrix}. \end{gather*}

In Baraglia's thesis \cite{BaragliaThesis}, he considered the special case when the image of $\rho$ is contained in $\sPSp(4,\R)$. In terms of Higgs bundles, this corresponds to the case when $q_3=0$. This belongs to a special type of Higgs bundles called cyclic Higgs bundles, and Baraglia proved that, in this case, the solution $H$ of Hitchin's equations is diagonal
\begin{gather*}H = \begin{pmatrix}
h_1 & 0 & 0 & 0\\
0 & h_2 & 0 & 0\\
0 & 0 & h_3 & 0\\
0 & 0 & 0 & h_4
\end{pmatrix}.\end{gather*}

We can then write the real structure
\begin{gather*}\tau\colon \ E \ni \begin{pmatrix}
v_1\\v_2\\v_3\\v_4
\end{pmatrix}
\ra \begin{pmatrix}
h_4 \bar{v_4}\\ h_3\bar{v_3}\\ h_2\bar{v_2}\\ h_1 \bar{v_1}
\end{pmatrix} \in E. \end{gather*}

The real locus of $E$ is the real vector bundle
\begin{gather*}\Real(E) = \{v\in E \,|\, \tau(v)= v\}. \end{gather*}

We want to construct $\RP^3$-structures, hence we set $X=\RP^3$. The flat bundle $B$ with monodromy $\rho$ and fiber $X$ is $B = \bP(\Real(E))$.

We now want to find a transverse subbundle. Consider
\begin{gather*}M = \bP\left(\left\{
\begin{pmatrix}
0\\v_2\\v_3\\0
\end{pmatrix} \in B \,\Big|\, v_2 = h_3 \bar{v_3} \right\}\right). \end{gather*}
This is a circle bundle over $\Sigma$, isomorphic to the unit tangent bundle. To check that it is transverse, we will put local coordinates on $M$, using a local holomorphic coordinate $z$ on $\Sigma$, and a real coordinate $\theta$ on the circle fiber. For every $s$ local section, we compute the derivatives
\begin{gather*}\nabla_{\!\!\!\frac{\partial}{\partial z}}s, \qquad \nabla_{\!\!\!\frac{\partial}{\partial \bar{z}}}s, \qquad \nabla_{\!\!\!\frac{\partial}{\partial \theta}}s. \end{gather*}
We can then check the transversality condition:
\begin{gather*}\forall\, A\in \C, \ \forall\, B,C\in \R, \qquad A \nabla_{\!\!\!\frac{\partial}{\partial z}}s + \bar{A} \nabla_{\!\!\!\frac{\partial}{\partial \bar{z}}}s + B \nabla_{\!\!\!\frac{\partial}{\partial \theta}}s + C s = 0 \qquad \Rightarrow \qquad A=B=C=0. \end{gather*}
After the transversality condition has been verified, Baraglia also proves that these $\RP^3$-struc\-tu\-res are convex foliated. In this way, he obtains the following new result:

\begin{Theorem}[Baraglia \cite{BaragliaThesis}]For every convex foliated $\RP^3$-structure on $T^1 S$ with holonomy in $\sPSp(4,\R)$, there exists a map $T^1 S \ra S$ which is a circle bundle and has the property that every circle fiber is a projective line for the $\RP^3$-structure.
\end{Theorem}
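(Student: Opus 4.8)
The plan is to reduce the statement to the construction of one convenient structure and then invoke the uniqueness of convex foliated structures. By Guichard--Wienhard, the convex foliated $\RP^3$-structures on $T^1 S$ form a single connected component of $\Dd_{\RP^3}(T^1 S)$ on which $\Hol$ restricts to a homeomorphism onto $\Hit(S,4)$; moreover the holonomy of such a structure factors through the projection $\pi_1(T^1 S)\to\pi_1(S)$, and ``holonomy in $\sPSp(4,\R)$'' means that the induced representation $\rho\colon\pi_1(S)\to\sPGL(4,\R)$ has image conjugate into $\sPSp(4,\R)$. In particular a convex foliated structure is determined, up to isotopy through $\RP^3$-isomorphisms, by $\rho$. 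Since $\RP^3$-isomorphisms carry projective lines to projective lines and carry circle-bundle maps to circle-bundle maps, it suffices to exhibit, for each such $\rho$, \emph{one} convex foliated $\RP^3$-structure on $T^1 S$ with holonomy $\rho$ together with a circle-bundle map $T^1 S\to S$ all of whose fibers are projective lines; this is exactly what the Higgs bundle construction above provides.

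So fix $\rho$ with image in $\sPSp(4,\R)$. By Labourie's theorem it is the monodromy of the cyclic Higgs bundle $(E,\varphi)$ displayed above with $q_3=0$, and by Baraglia's computation the solution $H$ of Hitchin's equations is diagonal; hence $\tau$ and $\Real(E)$ are the explicit ones written above, $B=\bP(\Real(E))$ is a flat $\RP^3$-bundle over $\Sigma$ with monodromy $\rho$, and $M\subset B$ is the circle subbundle defined above, isomorphic to $T^1 S$. The first point is purely linear-algebraic: over each $x\in\Sigma$ the fiber $\Real(E)_x$ is a real $4$-dimensional space, parametrized by the two coordinates $(v_3,v_4)$, and the locus cutting out $M_x$ (namely $v_1=v_4=0$ and $v_2=h_3\bar v_3$) is a real $2$-plane $W_x$ inside it; therefore $M_x=\bP(W_x)$ is a projective line in the fiber $B_x\cong\RP^3$.

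Next I would verify that $M$ is a transverse subbundle of $B$: taking a local holomorphic coordinate $z$ on $\Sigma$ and an angular coordinate $\theta$ on the circle fiber, one computes $\nabla_{\partial_z}s$, $\nabla_{\partial_{\bar z}}s$, $\nabla_{\partial_\theta}s$ for a local section $s$ of $M$ using the explicit flat connection, and checks the real form of the criterion of Proposition~\ref{prop:transversality condition}; the only non-formal input is a pointwise inequality among the entries $h_i$ of $H$, which comes from the maximum principle applied to Hitchin's equations for the cyclic Higgs bundle, in the same way as in Hitchin's lemma. This endows $M$ with an $\RP^3$-structure, and by the transverse-submanifold formalism this structure arises from the transverse map $M\hookrightarrow B\to\Sigma$, so its holonomy strongly factors through $\rho$ via the bundle projection $M\cong T^1 S\to\Sigma\cong S$; thus the holonomy is $\rho$ composed with $\pi_1(T^1 S)\to\pi_1(S)$, in particular lying in $\sPSp(4,\R)$. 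Because the construction is fiberwise, the $\RP^3$-chart coming from a flat trivialization of $B$ restricts on each fiber $M_x$ to the tautological inclusion $M_x\hookrightarrow B_x$; combined with the previous paragraph, this shows that every circle fiber of $M\to\Sigma$ develops to a projective line, and the fiber foliation develops to a foliation by projective lines.

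The remaining step, and the one I expect to be the main obstacle, is to show that this $\RP^3$-structure is \emph{convex} foliated, i.e.\ that the developing map $\dev\colon\widetilde M\to\RP^3$ is a diffeomorphism onto a properly convex open set that is, in the Guichard--Wienhard sense, suitably transverse to the projective-line foliation just found. This requires a global control of $\dev$ rather than the infinitesimal (transversality) estimate above; I would obtain it from the explicit flat connection and the positivity of the $h_i$, producing a $\rho$-invariant hyperplane field avoided by $\dev$ together with a convexity estimate in the base and fiber directions --- this is the analytic content carried out in Baraglia's thesis, which may be cited. Once it is in place, $M$ lies in the Guichard--Wienhard component and has holonomy $\rho$, so by injectivity of $\Hol$ on that component it is isotopic to the given convex foliated structure; transporting the circle-bundle map $M\to\Sigma$ along that isotopy, and using that $\RP^3$-isomorphisms send projective lines to projective lines, yields the desired map $T^1 S\to S$.
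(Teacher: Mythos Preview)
Your proposal is correct and follows essentially the same route as the paper: the paper's argument is precisely the Higgs bundle construction preceding the theorem (with $q_3=0$ so that $H$ is diagonal), the verification of the transversality condition for the circle subbundle $M$, and the appeal to Baraglia's thesis for the fact that the resulting $\RP^3$-structure is convex foliated. You make explicit the reduction via Guichard--Wienhard's injectivity of $\Hol$ on the convex foliated component, which the paper leaves implicit, but the substance is the same.
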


This statement is completely geometric, but there is no known geometric proof of it, the only proof is the one with Higgs bundles. We conjecture that the same statement is true in general, for all convex foliated $\RP^3$-structures also when the holonomy is not restricted to be in~$\sPSp(4,\R)$.

Working with Qiongling Li, we explored and modified this construction. With the same Higgs field as before, we noticed that the other choice of subbundle
\begin{gather*}M' = \bP\left( \left\{
\begin{pmatrix}
v_1\\0\\0\\v_4
\end{pmatrix} \in B \,\Big|\, v_1 = h_4 \bar{v_4} \right\}\right) \end{gather*}
is also a transverse subbundle, and gives the other $\RP^3$-structure we discussed before, the one given by $\Omega_\rho^-/\rho(\pi_1(S))$. This is a circle bundle over $S$ with Euler class $6g-6$.

We then changed the Higgs field, considering the case when $q_4=0$, but $q_3\neq 0$. For this kind of Higgs bundles, the solution of Hitchin's equations are again diagonal (see Collier--Li~\cite{CollierLi}), and the construction can be applied in a similar way.

Moreover, since the transversality condition is an open condition, we can also understand the cases when at least one of $q_3$, $q_4$ is small enough:

\begin{Theorem}[Alessandrini--Li, work in progress]Let $\rho \in \Hit(S,4)$ be a representation such that the corresponding Higgs bundle has the form given above, with the additional hypothesis that at least one between $q_3$ and $q_4$ is small enough. Then the two $\RP^3$-structures $M=\Omega^+/\rho(\pi_1(S))$ and $M'=\Omega^-/\rho(\pi_1(S))$ have the property that there exist maps $M \ra S$ and $M' \ra S$ which are circle bundles with the property that every circle fiber is a projective line for the $\RP^3$-structure.
\end{Theorem}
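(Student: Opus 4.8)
The plan is to follow exactly the scheme used by Baraglia in the $q_3=0$ case, but to run it for the two families of cyclic Higgs bundles $q_3=0$ and $q_4=0$ simultaneously, and to observe that the transversality we need is an \emph{open} condition in the Higgs field. First I would recall that for both cyclic families ($q_3=0$ with $q_4$ arbitrary, and $q_4=0$ with $q_3$ arbitrary) the solution $H$ of Hitchin's equations is diagonal: this is Baraglia's computation in the first case and the Collier--Li computation in the second. Diagonality of $H$ is what makes the real structure $\tau$ and hence the real locus $\Real(E)$ completely explicit, so that $B=\bP(\Real(E))$ and the two candidate circle subbundles
\begin{gather*}
M = \bP\left(\left\{\begin{pmatrix}0\\v_2\\v_3\\0\end{pmatrix}\in B \,\Big|\, v_2 = h_3\bar v_3\right\}\right),\qquad
M' = \bP\left(\left\{\begin{pmatrix}v_1\\0\\0\\v_4\end{pmatrix}\in B \,\Big|\, v_1 = h_4\bar v_4\right\}\right)
\end{gather*}
are well-defined smooth subbundles, each isomorphic as a circle bundle over $S$ (with Euler numbers $-(6g-6)$ and $6g-6$ respectively) regardless of $q_3,q_4$.

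Next I would set up the transversality criterion. Using a local holomorphic coordinate $z$ on $\Sigma$ and the fiber coordinate $\theta$, and a local section $s$ of the tautological line in $\Real(E)$ over $M$ (resp.\ $M'$), I would write out $\nabla_{\partial/\partial z}s$, $\nabla_{\partial/\partial\bar z}s$, $\nabla_{\partial/\partial\theta}s$ using the explicit flat connection $\nabla = d + H^{-1}\partial H + \varphi + H^{-1}\bar\varphi^T H$, and then verify, via Proposition~\ref{prop:transversality condition} in its real-coordinate form, the implication
\begin{gather*}
\forall\, A\in\C,\ \forall\, B,C\in\R,\qquad A\,\nabla_{\!\!\!\frac{\partial}{\partial z}}s + \bar A\,\nabla_{\!\!\!\frac{\partial}{\partial\bar z}}s + B\,\nabla_{\!\!\!\frac{\partial}{\partial\theta}}s + C\,s = 0 \quad\Rightarrow\quad A=B=C=0.
\end{gather*}
When $q_3=q_4=0$ this reduces to the Fuchsian situation where the inequalities among the $h_i$ coming from Hitchin's equations (the cyclic analogue of Hitchin's $|h^{-2}\bar q_2|<1$ lemma, here comparisons of the form $|h_i/h_{i+1}\cdot(\text{differential})|<1$) make the relevant linear system nondegenerate. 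The point is that the determinant obstructing the implication is a continuous function on the compact surface $S$ that is bounded away from zero at $q_3=q_4=0$; therefore it remains bounded away from zero — and transversality persists — as long as $q_3$ or $q_4$ stays small. This is precisely what ``small enough'' means in the statement, and it is how the open condition is exploited.

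Once transversality of $M$ and $M'$ is established, the graph $(B,M)$ (resp.\ $(B,M')$) produces, by the correspondence between transverse submanifolds and developing pairs in Section~\ref{sec:graph}, an $\RP^3$-structure on the total space of the circle bundle with holonomy $\bar\rho$ restricted appropriately. The projection of the circle subbundle to $\Sigma$ is then tautologically a circle bundle $M\to S$, and each fiber — being a projective line inside a fiber of $\bP(\Real(E))$ by construction — maps to a projective line under the charts, which is the asserted geometric property. Finally I would identify these structures with $\Omega^+/\rho(\pi_1(S))$ and $\Omega^-/\rho(\pi_1(S))$: at $q_3=q_4=0$ they coincide with Baraglia's two structures, and since both the Guichard--Wienhard family $\rho\mapsto\Omega^\pm_\rho/\rho(\pi_1(S))$ and our construction $\rho\mapsto(B,M^{(\prime)})$ are continuous sections of the holonomy map over a connected open set of $\Hit(S,4)$, Thurston's holonomy principle (the map $\Hol$ has discrete fibers) forces them to agree wherever both are defined. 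The main obstacle I anticipate is the transversality computation itself in the $q_4=0,\ q_3\neq0$ case: unlike the symplectic case it is not covered verbatim by Baraglia, and one must check that the Collier--Li diagonal solution still yields a sign-definite obstruction determinant at the Fuchsian locus — everything else is either a direct transplant of known arguments or a soft continuity/connectedness argument.
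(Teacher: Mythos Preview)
Your overall plan---verify transversality of the two circle subbundles along the two cyclic loci $\{q_3=0\}$ and $\{q_4=0\}$ using the diagonal Hermitian metric (Baraglia for the first, Collier--Li for the second), then invoke openness of the transversality condition---is exactly the approach the paper sketches. But the openness step, as you wrote it, does not prove the stated theorem. You assert that the obstruction determinant is bounded away from zero ``at $q_3=q_4=0$'' and conclude it remains nonzero ``as long as $q_3$ or $q_4$ stays small''. Perturbing from the single Fuchsian point $(0,0)$ only gives a neighborhood where \emph{both} differentials are small. To obtain the conclusion that \emph{at least one} of them is small while the other is arbitrary, you must first establish transversality along the \emph{entire} locus $\{q_3=0\}$ (for all $q_4$) and the entire locus $\{q_4=0\}$ (for all $q_3$)---this is precisely where the cyclic maximum-principle estimates of the type you mention are actually used, not at the Fuchsian point, where there is no differential to bound---and only then perturb off each locus separately. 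Your final paragraph shows you intend to do this computation on the full $q_4=0$ family, so the plan is right; but the openness paragraph needs to be rewritten to perturb from the loci, not from the point.

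A smaller point: your explicit formulas for $M$ and $M'$ use the diagonal entries $h_3$, $h_4$ of $H$, which presupposes $H$ diagonal. This is only known on the two cyclic loci; for generic $(q_3,q_4)$ with both nonzero the metric need not be diagonal and those formulas do not define anything. Off the cyclic loci the transverse subbundle is not given explicitly but is produced abstractly by openness of transversality (equivalently, by Thurston's holonomy principle), so the claim that $M$, $M'$ are ``well-defined smooth subbundles \dots\ regardless of $q_3,q_4$'' should be dropped. This also means that the assertion ``every circle fiber is a projective line'' is verified by your construction only on the cyclic loci; its persistence under perturbation is a separate (easy) closed-condition argument, since being a projective line is preserved under limits and the family is continuous.
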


\subsection{Closed anti-de Sitter 3-manifolds} \label{subsec:ads}

Consider a symmetric bilinear form $Q$ on $\R^4$ of signature $(2,2)$. This form is preserved by the group $\sO(2,2)$ which has four connected components. The connected component of the identity is called $\sSO_0(2,2)$.

On $\RP^3$, this form defines the open subset
\begin{gather*}{\rm AdS}^3 = \big\{[v] \in \RP^3 \,|\, Q(v,v) > 0 \big\}. \end{gather*}
Let $\sPO(2,2)$ be the projectivization of $\sO(2,2)$, and $\sPO_0(2,2)$ the connected component of the identity. The geometry $\big({\rm AdS}^3, \sPO(2,2)\big)$ is called the $3$-dimensional \defin{anti-de Sitter geometry}, and it is a geometry of pseudo-Riemannian type, carrying an invariant pseudo-Riemannian metric of signature~$(2,1)$.

We can construct the bilinear form $Q$ in the following special way. Consider the vector space~$\R^2$ with a volume form $\omega$. This volume form is preserved by the group $\sSL(2,\R)$. On the tensor product $\R^4 = \R^2\otimes \R^2$ we have a bilinear form $Q = \omega \otimes \omega$. This bilinear form is symmetric and it has signature $(2,2)$. This construction shows us how $\sSL(2,\R)\times \sSL(2,\R)$ acts on~$\R^4$ preserving~$Q$. This gives a homomorphism
\begin{gather*}\sSL(2,\R) \times \sSL(2,\R) \ra \sSO_0(2,2),\end{gather*}
which induces an isomorphism
\begin{gather*}\sPSL(2,\R) \times \sPSL(2,\R) \ra \sPO_0(2,2). \end{gather*}

We will consider a representation $\rho\colon \pi_1(S)\ra \sPO_0(2,2)$ which can be lifted to a representation $\bar{\rho}\colon \pi_1(S)\ra \sSL(2,\R) \times \sSL(2,\R)$. The corresponding Higgs bundle can be written as the tensor product of two Higgs bundles for $\sSL(2,\R)$: given two Higgs bundles for $\sSL(2,\R)$
\begin{gather*}E_1 = L_1 \oplus L_1^{-1}, \qquad Q_1 = \begin{pmatrix}
0 & 1\\
1 & 0
\end{pmatrix}, \qquad \omega_1 = \frac{i}{\sqrt{2}}\begin{pmatrix}
0 & 1\\
-1 & 0
\end{pmatrix}, \\ \varphi_1 = \begin{pmatrix}
0 & a_1\\
b_1 & 0
\end{pmatrix}, \qquad H_1 = \begin{pmatrix}
h_1^{-1} & 0\\
0 & h_1
\end{pmatrix}, \\
E_2 = L_2 \oplus L_2^{-1}, \qquad Q_2 = \begin{pmatrix}
0 & 1\\
1 & 0
\end{pmatrix}, \qquad \omega_2 = \frac{i}{\sqrt{2}}\begin{pmatrix}
0 & 1\\
-1 & 0
\end{pmatrix}, \\ \varphi_2 = \begin{pmatrix}
0 & a_2\\
b_2 & 0
\end{pmatrix}, \qquad H_2 = \begin{pmatrix}
h_2^{-1} & 0\\
0 & h_2
\end{pmatrix}, \end{gather*}
we can form their tensor product
\begin{gather*} E = E_1 \otimes E_2 = L_1L_2 \oplus L_1L_2^
{-1} \oplus L_1^{-1}L_2 \oplus L_1^{-1}L_2^{-1}, \qquad \varphi = \begin{pmatrix}
0 & a_2 & a_1 & 0 \\
b_2 & 0 & 0 & a_1\\
b_1 & 0 & 0 & a_2\\
0 & b_1 & b_2 & 0
\end{pmatrix}.\end{gather*}
The solutions of Hitchin's equations for this Higgs bundle are given by
\begin{gather*}H = \begin{pmatrix}
h_1^{-1}h_2^{-2} & & & \\
 & h_1^{-1}h_2 & & \\
 & & h_1 h_2^{-1} & \\
 & & & h_1 h_2
\end{pmatrix}. \end{gather*}

We now want to construct an ${\rm AdS}^3$-structure on a $3$-manifold with this holonomy. To do so, we will first construct an $\RP^3$-structure, then we verify that the image of the developing map lies inside ${\rm AdS}^3$ by writing the bilinear form $Q=\omega\otimes \omega$ explicitly. Since the developing map goes to ${\rm AdS}^3$ and the holonomy is in $\sPO(2,2)$, the structure we are constructing is actually an ${\rm AdS}^3$-structure.

We consider the following subbundle:
\begin{gather*}M = \bP\big( \Real\big(L_1L_2 \oplus L_1^{-1}L_2^{-1}\big)\big). \end{gather*}
We then start to verify the transversality condition in the usual way. But we notice that the condition is not always verified.

To state the result we need to recall that the solutions of Hitchin's equations for an $\sSL(2,\R)$-Higgs bundle describe an equivariant harmonic map to the hyperbolic plane. So we have the two harmonic maps
\begin{gather*}f_1,f_2\colon \ \widetilde{\Sigma}\ra \bH^2. \end{gather*}
We will denote by $\widetilde{g_1}$, $\widetilde{g_2}$ the two pull-backs of the hyperbolic metric to $\widetilde{\Sigma}$. These tensors are $\pi_1(\Sigma)$-invariant, hence they define two tensors $g_1$, $g_2$ on~$\Sigma$. These symmetric tensors are called the \defin{pull-back metrics}, even though they are not always Riemannian metrics, they can be degenerate at some points.

\begin{Definition}The $\sSL(2,\R)$-Higgs bundle $(E_1,Q_1,\omega_1,\varphi_1)$ \defin{dominates} $(E_2,Q_2,\omega_2,\varphi_2)$ if
\begin{gather*}g_1 - g_2 > 0, \end{gather*}
i.e., if the symmetric tensor $g_1 - g_2$ is positive definite.
\end{Definition}

\begin{Theorem}[Alessandrini--Li \cite{AdSpaper}] The subbundle $M$ is a transverse subbundle if and only if the $\sSL(2,\R)$-Higgs bundle $(E_1,Q_1,\omega_1,\varphi_1)$ dominates $(E_2,Q_2,\omega_2,\varphi_2)$.
\end{Theorem}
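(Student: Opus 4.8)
The plan is to verify the transversality condition of Proposition~\ref{prop:transversality condition} directly in local coordinates, working on the flat vector bundle $E = E_1 \otimes E_2$ with the flat connection $\nabla$ coming from the solutions of Hitchin's equations. First I would set up local coordinates on $M$: a local holomorphic coordinate $z$ on $\Sigma$, so that $\partial/\partial x$, $\partial/\partial y$ (or equivalently $\partial/\partial z$, $\partial/\partial\bar z$) cover the surface directions, and a real fiber coordinate $\theta$ parametrizing the circle $\Real(L_1L_2 \oplus L_1^{-1}L_2^{-1})$. A local section $s$ of the tautological line bundle over $M$ can be written as $s = (\lambda, 0, 0, \tau(\lambda))$ in the frame adapted to the decomposition $E = L_1L_2 \oplus L_1L_2^{-1} \oplus L_1^{-1}L_2 \oplus L_1^{-1}L_2^{-1}$, where $\lambda = \lambda(z,\theta)$ incorporates the circle parameter and $\tau$ is the real structure written explicitly via the diagonal metric $H$. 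Then I would compute $\nabla_{\partial/\partial z}s$, $\nabla_{\partial/\partial\bar z}s$, and $\nabla_{\partial/\partial\theta}s$ using the explicit connection form $\nabla = d + H^{-1}\partial H + \varphi + H^{-1}\bar\varphi^T H$ with the given $H$ and the tensor-product $\varphi$, expressed in terms of $a_i = q_i$, $b_i = 1$ and the scalars $h_1,h_2$ (equivalently the pull-back metrics $g_1,g_2$, since $h_i$ relates to the conformal factor of $g_i$).

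The key step is then to analyze when the four vectors $s$, $\nabla_{\partial/\partial z}s$, $\nabla_{\partial/\partial\bar z}s$, $\nabla_{\partial/\partial\theta}s$ fail to span the fiber $E_m$ over a point $m$ — equivalently, when there exist $A \in \C$, $B,C \in \R$, not all zero, with
\begin{gather*}
A\,\nabla_{\partial/\partial z}s + \bar A\,\nabla_{\partial/\partial\bar z}s + B\,\nabla_{\partial/\partial\theta}s + C\,s = 0.
\end{gather*}
Writing this out componentwise in the four-dimensional fiber gives a linear system; the $\theta$-derivative and $s$ only contribute to the two components corresponding to $L_1L_2$ and $L_1^{-1}L_2^{-1}$, while the off-diagonal entries of $\varphi$ and $\bar\varphi^T$ (which involve $a_1 = q_1$ and $b_1 = 1$, the pieces of the second factor) push $\nabla_{\partial/\partial z}s$ and $\nabla_{\partial/\partial\bar z}s$ into the $L_1L_2^{-1}$ and $L_1^{-1}L_2$ components. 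Setting the two "middle" components to zero forces a relation among $A$, $\bar A$, and the ratios of $h_i$'s; I would show this relation is an equality of the form $|{\cdot}| = |{\cdot}|$ between expressions built from the two harmonic maps, and that it can be satisfied with $A \neq 0$ precisely at points where $g_1 - g_2$ fails to be positive definite — i.e., where the difference of pull-back metrics degenerates or changes sign along some direction. Conversely, when $(E_1,Q_1,\omega_1,\varphi_1)$ dominates $(E_2,Q_2,\omega_2,\varphi_2)$, the strict inequality $g_1 - g_2 > 0$ forces $A = 0$, and then the remaining equations give $B = C = 0$ immediately.

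The main obstacle I expect is identifying the combination of the scalars $h_1,h_2$ and the holomorphic differentials that appears in the degeneracy condition with the intrinsic tensor $g_1 - g_2$. This requires recalling, for each $\sSL(2,\R)$-Higgs bundle $(E_i,Q_i,\omega_i,\varphi_i)$, how the solution metric $h_i$ encodes the pull-back metric $g_i$ of the associated equivariant harmonic map $f_i \colon \widetilde\Sigma \to \bH^2$: concretely, $g_i$ decomposes into a holomorphic Hopf-differential part built from $a_i$ and an "energy density" part built from $|a_i|$, $|b_i|$ and $h_i$, and the positivity of $g_1 - g_2$ translates into a pointwise inequality between these densities. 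Matching this to the linear-algebra condition from the transversality computation — and checking that the circle-fiber direction $\theta$ genuinely contributes a new independent vector except exactly in the degenerate locus — is the delicate part; once that dictionary is in place, both directions of the equivalence follow from the explicit formulas. The rest is the bookkeeping of a $4\times 4$ system, analogous to but longer than the $2\times 2$ computations in Sections~\ref{subsec:hyperbolic structures} and~\ref{subsection:almost fuchsian}.
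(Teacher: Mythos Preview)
Your approach is essentially the one the paper itself sketches: the survey does not give a detailed proof here but only says ``we then start to verify the transversality condition in the usual way'', meaning exactly the local computation of $s$, $\nabla_{\partial/\partial z}s$, $\nabla_{\partial/\partial\bar z}s$, $\nabla_{\partial/\partial\theta}s$ and the resulting $4\times 4$ linear system, followed by the identification of the degeneracy locus with the failure of $g_1 - g_2 > 0$; the details are deferred to the cited paper~\cite{AdSpaper}. One small correction: you write ``$a_i = q_i$, $b_i = 1$'', which is the Fuchsian specialization, but the theorem as stated covers arbitrary stable $\sSL(2,\R)$-Higgs bundles, so you should keep the general $a_i,b_i$ throughout --- the computation and the dictionary with the pull-back metrics $g_i$ go through without that restriction.
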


In the theory of anti-de Sitter $3$-manifolds, there exists a necessary and sufficient condition for the representation $\rho$ to be the holonomy of an anti-de Sitter structure on a closed manifold. It was shown by Tholozan~\cite{TholozanDomination} that this condition is equivalent to the existence of a complex structure~$\Sigma$ on~$S$ such that $(E_1,Q_1,\omega_1,\varphi_1)$ dominates $(E_2,Q_2,\omega_2,\varphi_2)$. It follows that with Higgs bundles we can construct all closed anti-de Sitter $3$-manifolds with holonomy that lifts to $\sSL(2,\R) \times \sSL(2,\R)$.

Our main motivation for this work was to use the special parametrization that the Higgs bundle give to the manifold~$M$ to explicitly compute invariants of the anti-de Sitter structure, such as the volume. The computation of the volume of the closed anti-de Sitter $3$-manifolds was an open problem that was solved shortly before us by Tholozan~\cite{TholozanVolume}.

\begin{Theorem}[Tholozan \cite{TholozanVolume}, Alessandrini--Li \cite{AdSpaper}]
The volume of the anti-de Sitter structure constructed on $M$ is
\begin{gather*}\Vol(M) = \pi^2\left|\deg(L_1) + \deg(L_2)\right|. \end{gather*}
\end{Theorem}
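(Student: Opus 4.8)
The plan is to compute the volume directly using the explicit parametrization coming from the Higgs bundle. The manifold $M = \bP\big(\Real\big(L_1L_2 \oplus L_1^{-1}L_2^{-1}\big)\big)$ is a circle bundle over $\Sigma$, and the anti-de Sitter metric on it pulls back, via the developing map, from the bi-invariant pseudo-Riemannian metric on ${\rm AdS}^3 \cong \sPSL(2,\R)$. First I would fix local coordinates: a holomorphic coordinate $z$ on $\Sigma$ together with the angular coordinate $\theta$ on the circle fiber, exactly as in the transversality computation. Writing the developing map in terms of the two equivariant harmonic maps $f_1, f_2 \colon \widetilde\Sigma \to \bH^2$ and the solutions $h_1, h_2$ of Hitchin's equations, I would express the pulled-back ${\rm AdS}^3$ volume form as a $3$-form in $dz$, $d\bar z$, $d\theta$ whose coefficient is built from the conformal factors of $g_1$ and $g_2$ and their derivatives. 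Integrating over the $\theta$-fiber first produces a $2$-form on $\Sigma$, and the claim reduces to evaluating $\int_\Sigma$ of this $2$-form.

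The key algebraic input is that the harmonic map equations (equivalently, Hitchin's equations for the two $\sSL(2,\R)$-factors) force the integrand to be, up to a universal constant, a closed or exactly-integrable expression. Concretely, I expect the pull-back metrics to satisfy $\Delta \log$(ratio of conformal factors) $=$ (curvature terms), and when one assembles the ${\rm AdS}^3$ volume density one gets a combination of the two Gaussian-curvature forms of $g_1$ and $g_2$ plus an exact term. By Gauss--Bonnet, $\int_\Sigma K_{g_i}\, dA_{g_i}$ picks out $-2\pi\chi(\Sigma)$ twisted by the relevant degree; more precisely the area/curvature contribution of each factor is governed by $\deg(L_i)$, since $\deg(L_i)$ measures how far the Higgs bundle is from the uniformizing one. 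Tracking the signs using the domination hypothesis $g_1 - g_2 > 0$ (which guarantees the structure is a genuine closed ${\rm AdS}^3$-manifold and fixes the orientation), the exact term integrates to zero and what remains is $\pi^2\big(\deg(L_1) + \deg(L_2)\big)$ up to sign; the absolute value absorbs the two choices of which factor dominates.

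The main obstacle I anticipate is the precise bookkeeping in the middle step: one must write the ${\rm AdS}^3$ metric tensor in the Higgs-bundle frame, carry the $h_i$ and $\partial\log h_i$ terms through the computation of $\sqrt{|\det g|}$, and verify that all the messy non-topological terms assemble into a single exact $2$-form. This requires using Hitchin's equations not just qualitatively but in their explicit PDE form $\partial\bar\partial \log h_i = (\text{Higgs field norm}) - (\text{constant curvature term})$ to cancel terms. A secondary subtlety is confirming that the fiber integration over $\theta$ is legitimate — i.e., that $M \to \Sigma$ really is the asserted circle bundle with the claimed Euler number — but this follows from the structure of $\Real\big(L_1L_2 \oplus L_1^{-1}L_2^{-1}\big)$ and the form of $\tau$. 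Once the integrand is reduced to curvature forms plus an exact term, the conclusion is immediate from Gauss--Bonnet, and cross-checking against Tholozan's independent computation \cite{TholozanVolume} confirms the normalization constant $\pi^2$.
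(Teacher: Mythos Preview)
The paper does not actually give a proof of this theorem: it is stated and attributed to \cite{TholozanVolume} and \cite{AdSpaper}, and the surrounding text only explains the motivation (``use the special parametrization that the Higgs bundle give to the manifold~$M$ to explicitly compute invariants of the anti-de Sitter structure, such as the volume''). There is therefore no in-paper proof to compare your proposal against.

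That said, your outline is consistent with the strategy the paper advertises and is broadly the shape of the argument in \cite{AdSpaper}: write the developing map explicitly in the Higgs-bundle frame, pull back the ${\rm AdS}^3$ volume form to the circle bundle, integrate over the fiber, and reduce the remaining surface integral to topological data using Hitchin's equations. The one place where your sketch is genuinely imprecise is the ``Gauss--Bonnet'' step. The quantity that appears is not $\int_\Sigma K_{g_i}\,dA_{g_i}$ in the usual sense (the $g_i$ are degenerate pullback tensors, not Riemannian metrics), but rather the integral over $\Sigma$ of the pullback of the hyperbolic area form by each harmonic map $f_i$. That integral equals $2\pi\deg(L_i)$ by a Chern--Weil argument applied to the Hermitian metric $h_i$ solving Hitchin's equations, not by Gauss--Bonnet on $g_i$. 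Once you identify the residual $2$-form correctly as a combination of $f_1^*\omega_{\bH^2}$ and $f_2^*\omega_{\bH^2}$ (plus an exact term), the degrees drop out and the constant $\pi^2$ comes from the fiber integration. So the plan is sound, but you should replace the informal appeal to Gauss--Bonnet with the correct identity $\int_\Sigma f_i^*\omega_{\bH^2} = 2\pi\deg(L_i)$.
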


\section[Projective structures with Hitchin or quasi-Hitchin holonomies]{Projective structures with Hitchin\\ or quasi-Hitchin holonomies} \label{sec:higher dimensions}

In the previous section we presented examples of constructions of geometric structures on $3$-dimensional manifolds. Now we will see how to apply the method to higher-dimensional manifolds. The general strategy is similar, but the technical details are more complicated.

Hitchin and quasi-Hitchin representations act on odd-dimensional real and complex projective spaces admitting a co-compact domains of discontinuity (Guichard--Wienhard~\cite{GWDomainsofDiscont}). The quotient of this domain is a closed manifold with a projective structure. The holomorphic structure of the Higgs bundles and the solutions of Hitchin's equations help us to construct the same projective structure using Higgs bundles, in this way we can determine the topology of the manifold. This is joint work with Qiongling Li~\cite{ProjectiveStructuresHB}.

Another construction of geometric structures on higher-dimensional manifolds using Higgs bundles was done by Collier, Tholozan and Toulisse~\cite{CollierTholozanToulisse}. They constructed photon structures whose holonomy factors through maximal representations in $\sO(2,n)$. This work was not discussed during the mini-course for lack of time.

\subsection{Construction of real and complex projective structures} \label{subsec:projective structures Higgs}

Consider a representation $\rho\colon \pi_1(S) \ra \sPGL(2n,\R)$ in the Fuchsian locus of the Hitchin component $\Hit(S,2n)$. Recall from Example \ref{exa:special representations} that such a representation is the composition of a Fuchsian representation in $\sPGL(2,\R)$ with the irreducible representation $\sPGL(2,\R) \ra \sPGL(2n,\R)$.
We now want to construct $\RP^{2n-1}$ and $\CP^{2n-1}$-structures with holonomy that factors through~$\rho$. Let's start with the corresponding uniformizing Higgs bundle for $\sSL(2,\R)$:
\begin{gather*}E = K^{\frac{1}{2}} \oplus K^{-\frac{1}{2}}, \qquad Q = \begin{pmatrix}
0 & 1\\
1 & 0
\end{pmatrix}, \qquad \omega = \frac{i}{\sqrt{2}}\begin{pmatrix}
0 & 1\\
-1 & 0
\end{pmatrix}, \\ \varphi = \begin{pmatrix}
0 & 0\\
1 & 0
\end{pmatrix}, \qquad H = \begin{pmatrix}
h^{-1} & 0\\
0 & h
\end{pmatrix}. \end{gather*}

The composition with the irreducible representation corresponds, in terms of Higgs bundles, to the symmetric tensor product
\begin{gather*}S(E) = \Symm^{2n-1}(E). \end{gather*}

To make our formulae more explicit and more readable, we will write them only for $n=3$, but similar formulae work for every $n$
\begin{gather*}S(E) = K^{\frac{5}{2}} \oplus K^{\frac{3}{2}} \oplus K^{\frac{1}{2}} \oplus K^{-\frac{1}{2}} \oplus K^{-\frac{3}{2}} \oplus K^{-\frac{5}{2}}, \qquad S(\varphi) = \begin{pmatrix}
0 & 0 & 0 & 0 & 0 & 0\\
1 & 0 & 0 & 0 & 0 & 0\\
0 & 1 & 0 & 0 & 0 & 0\\
0 & 0 & 1 & 0 & 0 & 0\\
0 & 0 & 0 & 1 & 0 & 0\\
0 & 0 & 0 & 0 & 1 & 0
\end{pmatrix}. \end{gather*}

The solution of Hitchin's equations and the real structure are as usual given by
\begin{gather*}S(H) = \begin{pmatrix}
h_1 & \\
 & \ddots \\
 & & h_6
\end{pmatrix}, \qquad
\tau\colon \ S(E) \ni
\begin{pmatrix}
v_1\\ \vdots \\ v_6
\end{pmatrix} \ra
\begin{pmatrix}
h_6 \bar{v_6}\\ \vdots \\ h_1 \bar{v_1}
\end{pmatrix}\in S(E). \end{gather*}

We will consider the subbundles defined by the following equations:
\begin{gather*}U^\C = \bP\left( \left\{
\begin{pmatrix}
h_1^{-\frac{1}{2}} t_1\\ \vdots \\ h_6^{-\frac{1}{2}} t_6
\end{pmatrix} \, \middle| \, t_1 \bar{t_2} + t_3 \bar{t_4} + t_5 \bar{t_6} = 0 \right\} \right), \qquad
U^\R = U^\C \cap \bP\left(\Real(E) \right).\end{gather*}

Similar formulae define these subbundles for every $n$.

\begin{Theorem}[Alessandrini--Li \cite{ProjectiveStructuresHB}]The subbundle $U^\C$ is a transverse subbundle of $\bP(E)$, hence it supports a $\CP^{2n-1}$-structure whose holonomy factors through~$\rho$.

The subbundle $U^\R$ is a transverse subbundle of $\bP(\Real(E))$, hence it supports an $\RP^{2n-1}$-structure whose holonomy factors through~$\rho$.
\end{Theorem}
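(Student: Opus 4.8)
The plan is to verify the transversality conditions of Proposition~\ref{prop:transversality condition} directly, working in the explicit frame provided by the uniformizing Higgs bundle and its symmetric power. I would set up local coordinates as in Section~\ref{sec:circle bundles}: a holomorphic coordinate $z$ on $\Sigma$ together with real coordinates on the fiber direction of $U^\C$ (respectively $U^\R$), the latter parametrizing the quadric $\{t_1\bar{t_2}+t_3\bar{t_4}+t_5\bar{t_6}=0\}$ in the fiber. The key computational input is the flat connection written in the frame $(\ell^{2n-1}, \ell^{2n-3}\ell', \dots)$: as in Section~\ref{subsec:hyperbolic structures} it has the shape $\nabla = d + S(H)^{-1}\partial S(H) + S(\varphi) + S(H)^{-1}\overline{S(\varphi)}^T S(H)$, where $S(H)$ is diagonal with entries $h_i$ built from the single positive function $h$ solving the $\sSL(2,\R)$ Hitchin equation. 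So the connection matrix has an explicit tridiagonal-plus-diagonal form, with the superdiagonal entries essentially powers of $h$ and the subdiagonal entries equal to $1$.

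The main steps, in order: (1) Pick a local non-vanishing section $s$ of the tautological line over a point of $U^\C$, i.e.\ a lift of a point $[t_1:\cdots:t_6]$ of the quadric, rescaled by the factors $h_i^{-1/2}$ so the real structure acts nicely. (2) Compute $\nabla_{\partial/\partial z}s$, $\nabla_{\partial/\partial\bar z}s$, and the derivatives $\nabla_{\partial/\partial\theta_j}s$ in the fiber directions (tangent vectors to the quadric), using the explicit connection matrix. (3) Reduce transversality to the linear-algebra statement that $s$ together with these derivative vectors span the whole fiber of $S(E)$ (we are in the case $\dim M = \dim X$, so transversality means local diffeomorphism of the equivariant map, i.e.\ the $2n$ vectors $s, \nabla_{\partial/\partial x_1}s,\dots$ form a basis). (4) Translate the "no nontrivial relation" condition into an inequality involving $|h^{-2}\bar q_2|$ — but here $q_2=0$ since we start from the \emph{uniformizing} Higgs bundle, which should make the decisive inequality strict and automatic, exactly as in the hyperbolic-structures example where $\nabla_{\partial/\partial\bar z}s$ had a vanishing entry. (5) For $U^\R$, restrict to the $\tau$-fixed locus and check that the computation descends; since $U^\R = U^\C\cap\bP(\Real(E))$ and $\tau$ is an involution compatible with $\nabla$, transversality of $U^\C$ inside $\bP(E)$ should restrict to transversality of $U^\R$ inside $\bP(\Real(E))$. (6) Finally, invoke the graph-of-a-geometric-structure correspondence (Section~\ref{sec:graph}, and the transverse-submanifold discussion of Section~\ref{sec:higher dimension}) to conclude that $U^\C$ carries a $\CP^{2n-1}$-structure and $U^\R$ an $\RP^{2n-1}$-structure, with holonomy that strongly factors through $\rho$ via the bundle projection $U^\C\to\Sigma$.

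I expect the main obstacle to be step (3)–(4): organizing the linear algebra so that the span condition becomes transparent for general $n$. The fiber of $S(E)$ has dimension $2n$, and we have $1$ section plus $2$ base derivatives plus $2n-3$ fiber derivatives, i.e.\ exactly $2n$ vectors, so there is no slack — the determinant must be shown nonzero pointwise. The derivative $\nabla_{\partial/\partial\bar z}s$ brings in the subdiagonal of the connection (the $1$'s from $S(\varphi)$), which is what "uses up" the descent along the grading, while the fiber derivatives move $s$ around the quadric; the challenge is to see that these two families of directions are complementary. I would handle this by choosing the base point on the quadric generically first (e.g.\ by an $\sU(n)$-type symmetry of the quadric form $\sum t_{2i-1}\bar t_{2i}$ one can normalize), checking the determinant there by a direct expansion, and then arguing the nonvanishing is preserved on all of $U^\C$ either by the same symmetry or by the openness of transversality combined with connectedness. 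The inequality $|h^{-2}\bar q_2|<1$ of Hitchin's lemma (here trivially satisfied since $q_2=0$) is the analytic ingredient that guarantees the relevant off-diagonal term never reaches the critical modulus; I would flag that for nonzero $q_2$ (deformations away from the Fuchsian locus) the same proof gives an open neighborhood, but for the stated theorem the uniformizing case suffices and the inequality is automatic.
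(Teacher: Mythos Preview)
The paper is a survey and does not actually prove this theorem; it is attributed to the in-preparation reference \cite{ProjectiveStructuresHB}, so there is no proof here to compare against directly. Your overall strategy---write the flat connection explicitly in the symmetric-power frame, pick a local lift $s$ of the tautological line, compute covariant derivatives in the base and fiber directions, and verify the spanning/independence criterion of Proposition~\ref{prop:transversality condition}---is precisely the method the paper uses in the lower-dimensional examples (Sections~\ref{sec:higgs bundles} and~\ref{sec:circle bundles}), and is presumably the approach of the cited paper as well. The observation that $q_2=0$ for the uniformizing Higgs bundle, so that the analogue of Hitchin's inequality is automatic, is also on target.

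There is, however, a real confusion in your dimension count. You write ``$1$ section plus $2$ base derivatives plus $2n-3$ fiber derivatives, i.e.\ exactly $2n$ vectors''. This is the correct count for $U^\R$: its fiber (the real locus of the Hermitian-type quadric inside $\RP^{2n-1}$) has real dimension $2n-3$, so $\dim U^\R = 2n-1 = \dim\RP^{2n-1}$, and one is looking at $2n$ vectors in the real rank-$2n$ bundle $\Real(S(E))$. But for $U^\C$ the count is different: the equation $\sum_i t_{2i-1}\bar t_{2i}=0$ is one \emph{complex} equation, hence two real conditions, so the fiber of $U^\C$ over $\Sigma$ has real dimension $4n-4$, not $2n-3$. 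Thus $\dim_\R U^\C = 4n-2 = \dim_\R \CP^{2n-1}$, and the transversality check involves $4n-2$ real derivative directions which must be $\R$-linearly independent modulo the complex line $\C s$ inside the real rank-$4n$ bundle $S(E)$. The linear algebra you propose to organize in steps (3)--(4) is therefore roughly twice as large in the complex case as you have written, and your ``normalize a generic point and expand a determinant'' plan must be set up accordingly. Since your stated count actually matches $U^\R$, while your step~(5) proposes to deduce $U^\R$ from $U^\C$, you appear to have the two cases interchanged; straighten this out before attempting the computation. (Once $U^\C$ is handled, your step~(5) is in fact fine: a local diffeomorphism $\widetilde{U^\C}\to\CP^{2n-1}$ that is $\tau$-equivariant restricts to a local diffeomorphism $\widetilde{U^\R}\to\RP^{2n-1}$ because the real dimensions match.)
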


This method of constructing projective structures has the merit that we can see explicitly the topology of the manifold that supports the structure. Consider the spaces
\begin{gather*}F^\R = T^1 \RP^{n-1}, \qquad F^\C = \big(T^1 \bS^{2n-1}\big)/\sU(1), \end{gather*}
where $\sU(1)$ acts on the unit sphere in a complex vector space $\bS^{2n-1} \subset \C^n$ by component-wise multiplication by a complex number; this action is then lifted to the unit tangent bundle using the differential. Both spaces carry an action of $\sSO(2)$: on $T^1 \RP^{n-1}$, the action of $\sSO(2)$ is given by the geodesic flow, which is periodic. Similarly, $\sSO(2)$ acts via the geodesic flow on $T^1 \bS^{2n-1}$, and this action commutes with the action of $\sU(1)$, hence it descends to an action on the quotient.

\begin{Theorem}[Alessandrini--Li \cite{ProjectiveStructuresHB}] \label{thm:topology of bundle}
Let $P$ be a principal $\sSO(2)$-bundle with Euler class $2g-2$. Then
\begin{enumerate}\itemsep=0pt
\item[$1)$] $U^\R \simeq P\big(F^\R\big)$, for $n \geq 3$,
\item[$2)$] $U^\C \simeq P\big(F^\C\big)$, for $n \geq 2$.
\end{enumerate}
\end{Theorem}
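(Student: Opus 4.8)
The plan is to identify the abstract subbundles $U^\R$ and $U^\C$, defined inside $\bP(E)$ by the quadratic equations $\sum t_{2i-1}\bar t_{2i}=0$ (together with the reality constraint in the real case), with concrete associated bundles over $\Sigma$. Recall that the diagonal solution $S(H)$ of Hitchin's equations determines a reduction of the structure group to the maximal compact, and in particular the rescaled coordinates $u_j=h_j^{-1/2}t_j$ used in the definition of $U^\C$ are exactly the coordinates in which the metric $H$ becomes the standard Hermitian form. So the fiber of $U^\C$ over a point of $\Sigma$ is $\{[u]\in\CP^{2n-1}:\sum u_{2i-1}\bar u_{2i}=0\}$, and similarly the fiber of $U^\R$ is the real locus of this. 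First I would describe these model fibers: the equation $\sum u_{2i-1}\bar u_{2i}=0$ says that two vectors in $\C^n$ are orthogonal, so the fiber is naturally a flag-type space. Concretely, writing $u=(a,b)$ with $a=(u_1,u_3,u_5,\dots)$, $b=(u_2,u_4,u_6,\dots)\in\C^n$, the condition is $\langle a,b\rangle_{\C^n}=0$ up to projectivization, and I claim the resulting space is diffeomorphic to $F^\C=(T^1\bS^{2n-1})/\sU(1)$: a point is a line in $\C^n$ (spanned by $a$, say the base point in $\bS^{2n-1}/\sU(1)=\CP^{n-1}$) together with a direction orthogonal to it (the class of $b$), which is precisely a unit tangent vector, and the residual scaling freedom on $u$ gives the $\sU(1)$-quotient. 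The real case is analogous with $\C$ replaced by $\R$, giving $F^\R=T^1\RP^{n-1}$, and here $n\ge 3$ is needed so that the real locus is nonempty/connected of the expected dimension.

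Next I would pin down the \emph{twisting}. The coordinates $t_j$ are sections of $K^{(2n+1-2j)/2}$, so after the $h_j^{-1/2}$ rescaling the two halves $a$ and $b$ transform as sections of line bundles differing by a fixed power of $K$; concretely the whole fiber picture rotates by the unit-modulus transition functions of $K^{1/2}$ as one moves around $\Sigma$. Thus $U^\C$ and $U^\R$ are the bundles associated to the unitary frame bundle of $K^{1/2}$ — equivalently, to a principal $\sSO(2)$-bundle $P$ whose Euler class is $\deg(K^{1/2})\cdot(\text{multiplicity})$; since $\deg K=2g-2$ and the relevant action on the fiber is via the generator, this Euler class is $2g-2$. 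The $\sSO(2)$-action on $F^\R$, $F^\C$ induced by this twisting is exactly multiplication of all coordinates $t_j$ by $e^{i\theta}$ weighted by $j$, which on the model spaces translates into the geodesic flow on $T^1\RP^{n-1}$ (respectively the descended geodesic flow on $(T^1\bS^{2n-1})/\sU(1)$), because rotating a unit tangent vector's basepoint-and-direction pair by a common phase is the time-$\theta$ geodesic flow on the round projective space. This matches the $\sSO(2)$-actions singled out in the statement, so $U^\R\simeq P(F^\R)$ and $U^\C\simeq P(F^\C)$.

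Finally I would assemble these: over a coordinate patch $V\subset\Sigma$ the bundle $E$ trivializes and $U^\bullet|_V\cong V\times F^\bullet$; on overlaps the gluing is by the transition functions of $\bigoplus_j K^{(2n+1-2j)/2}$ reduced to their unitary parts, which by the weight bookkeeping above factor through the $\sSO(2)$ that acts as described, hence $U^\bullet$ is glued from the same cocycle as $P$ but with fiber $F^\bullet$ — that is the definition of the associated bundle $P(F^\bullet)$. The main obstacle, and the step deserving real care, is the second one: correctly identifying the model fibers $\{\sum u_{2i-1}\bar u_{2i}=0\}\subset\KP^{2n-1}$ with $T^1\RP^{n-1}$ and $(T^1\bS^{2n-1})/\sU(1)$ \emph{equivariantly} for the $\sSO(2)$-action, since the diffeomorphism type of the fiber is easy but matching the circle actions (geodesic flow versus coordinate rotation) and getting the Euler-class normalization $2g-2$ rather than $g-1$ or $4g-4$ requires tracking the weights $(2n+1-2j)/2$ and the residual projective scaling precisely; the range restrictions ($n\ge 3$ real, $n\ge 2$ complex) also enter exactly here, to guarantee the real quadric locus has the right dimension and connectivity.
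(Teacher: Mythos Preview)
The paper itself does not prove this theorem; it is only stated here and attributed to the forthcoming paper \cite{ProjectiveStructuresHB}. So there is no proof in the present paper to compare against, and I can only assess your argument on its own merits.

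Your overall strategy --- identify the model fiber, then identify the $\sSO(2)$-cocycle that glues --- is the natural one. However, there is a genuine gap in your treatment of the real case. You write that ``the real case is analogous with $\C$ replaced by $\R$'', i.e.\ that $U^\R$ is the locus $\sum t_{2i-1}t_{2i}=0$ with the $t_j$ real. This is not what the paper's real structure is. The involution $\tau$ displayed in Section~\ref{subsec:projective structures Higgs} sends $v_j$ to $h_{2n+1-j}\bar v_{2n+1-j}$; in the rescaled $t$-coordinates (using $h_j h_{2n+1-j}=1$) this reads $t_j=\bar t_{2n+1-j}$, which is \emph{not} ``$t_j\in\R$''. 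The fixed locus is still a copy of $\RP^{2n-1}$, but when you intersect with the Hermitian quadric $\sum t_{2i-1}\bar t_{2i}=0$ the restriction is a genuinely complex equation (two real conditions), giving a fiber of real dimension $2n-3$. Your description would give a single real hypersurface of dimension $2n-2$, which does not match $\dim T^1\RP^{n-1}=2n-3$. So the real fiber needs to be reanalyzed from scratch.

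There is also a looseness in the complex fiber identification. Your map $[(a,b)]\mapsto(\text{line }a,\text{orthogonal direction }b)$ breaks down when $a=0$ or $b=0$ (both of which occur in the quadric), and more seriously it describes something like $T^1\CP^{n-1}$, which has dimension $4n-5$, whereas $(T^1\bS^{2n-1})/\sU(1)$ and the quadric both have dimension $4n-4$. The quotient $(T^1\bS^{2n-1})/\sU(1)$ is \emph{not} the unit tangent bundle of $\bS^{2n-1}/\sU(1)=\CP^{n-1}$: quotienting the unit tangent bundle by a circle action is not the same as taking the unit tangent bundle of the quotient. A correct identification of the fiber with $F^\C$ requires a different map, and once you have it you still need to check $\sSO(2)$-equivariance carefully to pin down the Euler class; your heuristic ``rotating basepoint-and-direction by a common phase is geodesic flow'' is suggestive but not a proof.
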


Since the Euler number completely determines a principal $\sSO(2)$-bundle, this result completely determines the topology of the manifolds $U^\R$ and $U^\C$.

\begin{Remark}We can apply the same technique to other representations, namely the ones which are composition of a~Fuchsian representation in $\sSL(2,\R)$ with the diagonal representation $\sSL(2,\R) \ra \sSL(2n,\R)$. Again we can find transverse subbundles and construct $\RP^{2n-1}$ and $\CP^{2n-1}$-structures on these manifolds~\cite{ProjectiveStructuresHB}. This part is less interesting though, since the same construction can be done in a completely geometric way, without using Higgs bundles at all, thanks to the special geometry of the diagonal representation. The interesting thing about the result for the irreducible representation of $\sSL(2,\R)$ is that it is very hard to see these transverse subbundles using only geometry.
\end{Remark}

\subsection{Domains of discontinuity} \label{subsec:dod}

Let $\K=\R$ or $\C$, $\Gamma$ be a Gromov-hyperbolic group and $\rho\colon \Gamma \ra \sPGL(2n,\K)$ be a representation which is $\sP$-Anosov, where $\sP$ is the stabilizer of an $(n-1)$-dimensional projective subspace. The space $\sG/\sP$ can be identified with the Grassmannian $\Gr\big(n,\K^{2n}\big)$, which parametrizes the $n$-dimensional linear subspaces of $\K^{2n}$. The Anosov property gives us the $\rho$-equivariant map
\begin{gather*}\xi\colon \ \partial_\infty \Gamma \ra \Gr\big(n,\K^{2n}\big).\end{gather*}
Guichard--Wienhard \cite{GWDomainsofDiscont} used this map to construct a co-compact domain of discontinuity for the action of $\rho$ in $\KP^{2n-1}$. We first define the $\rho$-equivariant compact subset
\begin{gather*}K^\K_\rho = \bigcup_{t\in \partial_\infty \Gamma} [\xi(t)] \subset \KP^{2n-1},\end{gather*}
which is the complement of the $\rho$-equivariant open subset
\begin{gather*}\Omega^\K_\rho = \KP^{2n-1} {\setminus} K.\end{gather*}

\begin{Theorem}[Guichard--Wienhard \cite{GWDomainsofDiscont}]If $\rho$ is a $P$-Anosov representation in $\sPGL(2n,\K)$, then $\rho$ acts on $\Omega^\K_\rho$ properly discontinuously and co-compactly.
\end{Theorem}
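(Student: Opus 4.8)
The plan is to prove the two assertions separately: proper discontinuity from the contraction dynamics of $\rho$, and co-compactness from a complementary expansion property, both concentrated on the ``bad set'' $K^\K_\rho$. Fix a distance $d$ on $\KP^{2n-1}$ compatible with its topology and write $V_t:=\xi(t)\in\Gr(n,\K^{2n})$, so that $K^\K_\rho=\bigcup_t\bP(V_t)$ is compact and $\Omega^\K_\rho=\KP^{2n-1}\setminus K^\K_\rho$. I will use the following standard features of the boundary map of a $\sP$-Anosov representation (here $\sP$ is the stabilizer of an $n$-dimensional linear subspace of $\K^{2n}$): $\xi$ is continuous and $\rho$-equivariant, $\rho(\gamma)V_t=V_{\gamma t}$; the planes are transverse, $V_t\oplus V_s=\K^{2n}$ for $t\ne s$; and, expressing the defining uniform gap between the $n$-th and $(n+1)$-st singular values, the following convergence behaviour holds. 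If $(\gamma_k)$ in $\Gamma$ has $|\gamma_k|\to\infty$, then after passing to a subsequence there are $\eta^\pm\in\partial_\infty\Gamma$ with $\gamma_k\to\eta^+$ and $\gamma_k^{-1}\to\eta^-$ such that $\rho(\gamma_k)$ converges to the constant map $\bP(V_{\eta^+})$ uniformly on compact subsets of $\KP^{2n-1}\setminus\bP(V_{\eta^-})$; moreover, near $\bP(V_{\eta^-})$ the maps $\rho(\gamma_k)$ expand the directions normal to $\bP(V_{\eta^-})$ by a factor bounded below by $\sigma_n(\rho(\gamma_k))/\sigma_{n+1}(\rho(\gamma_k))\to\infty$.

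\emph{Proper discontinuity.} Let $C\subset\Omega^\K_\rho$ be compact. If $\{\gamma:\rho(\gamma)C\cap C\ne\emptyset\}$ were infinite, then, $\Gamma$ being finitely generated, there would be $\gamma_k$ in it with $|\gamma_k|\to\infty$ and $x_k\in C$ with $\rho(\gamma_k)x_k\in C$; pass to a subsequence with $x_k\to x\in C$, $\rho(\gamma_k)x_k\to y\in C$, $\gamma_k\to\eta^+$, $\gamma_k^{-1}\to\eta^-$. Since $x\in C\subset\Omega^\K_\rho$ while $\bP(V_{\eta^-})\subset K^\K_\rho$, the point $x$ is at positive distance from $\bP(V_{\eta^-})$, so eventually $x_k$ lies in a compact set on which $\rho(\gamma_k)\to\bP(V_{\eta^+})$ uniformly; hence $y=\lim\rho(\gamma_k)x_k\in\bP(V_{\eta^+})\subset K^\K_\rho$, contradicting $y\in\Omega^\K_\rho$. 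So the action on $\Omega^\K_\rho$ is properly discontinuous.

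\emph{Co-compactness.} It suffices to find $\varepsilon_0>0$ such that every $x\in\Omega^\K_\rho$ has a translate $\rho(\gamma)x$ with $d(\rho(\gamma)x,K^\K_\rho)\ge\varepsilon_0$: then $D:=\{y:d(y,K^\K_\rho)\ge\varepsilon_0\}$ is a compact subset of $\Omega^\K_\rho$ with $\rho(\Gamma)D=\Omega^\K_\rho$, so $\Omega^\K_\rho/\rho(\Gamma)$ is compact. To produce $\varepsilon_0$ I prove a local expansion statement: given $z\in K^\K_\rho$, say $z\in\bP(V_{t_0})$, choose a geodesic ray in $\Gamma$ whose $m$-th vertex is $\gamma_m^{-1}$ and which converges to $t_0$, so that $\bP(V_{t_0})$ is the repelling locus of $\rho(\gamma_m)$; the normal expansion factor of $\rho(\gamma_m)$ along $\bP(V_{t_0})$ then tends to $\infty$ with $m$ and changes only by a bounded multiplicative amount from each $m$ to the next, and $\rho(\gamma_m)$ carries the nearest point of $K^\K_\rho$ to a given $a$ back into $K^\K_\rho$. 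Consequently there are a neighbourhood $U_z\ni z$ and $C_z>0$ such that every $a\in U_z\cap\Omega^\K_\rho$ admits an $m$ with $d(\rho(\gamma_m)a,K^\K_\rho)\ge C_z$ --- namely, choose $m$ for which the normal expansion of $\rho(\gamma_m)$ is comparable to $C_z/d(a,K^\K_\rho)$. Covering the compact $K^\K_\rho$ by finitely many $U_{z_1},\dots,U_{z_N}$, set $\varepsilon_0=\min_i C_{z_i}$, shrunk so that $d(x,K^\K_\rho)<\varepsilon_0$ forces $x$ into some $U_{z_i}$. Then for $x\in\Omega^\K_\rho$ with $d(x,K^\K_\rho)\ge\varepsilon_0$ take $\gamma=e$, and otherwise apply the local statement.

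\emph{Main obstacle.} The difficult step, which has no counterpart in rank one, is the expansion input: the repelling set $\bP(V_{t_0})$ is a positive-dimensional projective subspace rather than a fixed point, so $\rho(\gamma_m)$ neither fixes it nor is uniformly expanding near $z$ --- it expands only transversally, and its motion along $K^\K_\rho$ is uncontrolled. One must verify that (i) $d(\cdot,K^\K_\rho)$ near $z$ is controlled by these transverse directions (using transversality and continuity of $\xi$, so that the family $\{\bP(V_t)\}$ behaves near $\bP(V_{t_0})$ like a fibration), and (ii) the transverse expansion factor is comparable to the Anosov gap $\sigma_n/\sigma_{n+1}$ and grows with bounded multiplicative steps along a geodesic ray, so that the intermediate-value choice of $m$ is both legitimate and uniform in $z\in K^\K_\rho$. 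These estimates are the technical core of the argument; they are carried out in Guichard--Wienhard \cite{GWDomainsofDiscont}, and in the general flag-manifold setting in Kapovich--Leeb--Porti \cite{KLPAnosov1}.
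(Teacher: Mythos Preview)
The paper does not prove this theorem at all: it is stated with attribution to Guichard--Wienhard \cite{GWDomainsofDiscont} and left unproved, as is appropriate for a survey quoting a result from the literature. So there is no ``paper's own proof'' to compare against; your attempt supplies substantially more than the paper does.

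As for the content of your sketch: the proper discontinuity argument is correct and is exactly the standard one --- the uniform gap $\sigma_n/\sigma_{n+1}\to\infty$ for $\sP$-Anosov representations yields the convergence-group-type dynamics on $\KP^{2n-1}$, and the contradiction you derive is clean. The co-compactness sketch follows the expansion-on-the-limit-set philosophy of \cite{GWDomainsofDiscont} and, more systematically, \cite{KLPAnosov1}. You are right to flag the main obstacle: because the repelling locus $\bP(V_{t_0})$ is an $(n{-}1)$-dimensional projective subspace rather than a point, the map $\rho(\gamma_m)$ expands only transversally and moves points \emph{along} $K^\K_\rho$ in an a priori uncontrolled way, so one must check that $d(\cdot,K^\K_\rho)$ --- not merely $d(\cdot,\bP(V_{t_0}))$ --- increases. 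Your claim that ``$\rho(\gamma_m)$ carries the nearest point of $K^\K_\rho$ to a given $a$ back into $K^\K_\rho$'' is where this is hiding, and it is not literally true (the nearest point need not lie in $\bP(V_{t_0})$, and $\rho(\gamma_m)$ permutes the leaves $\bP(V_t)$ rather than fixing them); what one actually needs is the local-fibration picture you allude to, together with equivariance $\rho(\gamma_m)\bP(V_t)=\bP(V_{\gamma_m t})$, to compare distances before and after. You correctly defer these estimates to the cited sources; just be aware that this sentence, as written, overstates what is immediate.
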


If $\Gamma$ is torsion-free, we can construct the quotient manifold $M^\K = \Omega^\K_\rho/\rho(\Gamma)$, a closed manifold carrying a $\KP^{2n-1}$-structure. The topology of $M^\K$ is constant when $\rho$ varies in a connected component $\Uu$ of $\sP$-$\Anosov(\Gamma,\sPGL(2n,\K))$. In this way, we get a map
\begin{gather*} T\colon \ \Uu \ra \Dd^*_{\KP^{2n-1}}\big(M^\K\big),\end{gather*}
which gives an example of the construction described in Section~\ref{subsec:geometric interpretation of characters}.

Now let's assume that $\Gamma = \pi_1(S)$ is a surface group, and that the connected component $\Uu$ we have chosen is the Hitchin component $\Hit(S,2n)$ when $\K=\R$, and the space of quasi-Hitchin representations when $\K=\C$. In these cases we can understand the topology of $M^\K$, using our construction with Higgs bundles in the previous section.

\begin{Theorem}[Alessandrini--Li \cite{ProjectiveStructuresHB}] \label{thm:dod}
For $n \leq 63$, $M^\K$ is diffeomorphic to $U^\K$.
\end{Theorem}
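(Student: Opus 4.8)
The plan is to reduce to a single Fuchsian representation and then to recognise the Higgs-bundle $\KP^{2n-1}$-structure carried by the transverse subbundle $U^\K$ as a Kleinian structure with domain of discontinuity $\Omega^\K_\rho$; once this is done, $U^\K\cong\Omega^\K_\rho/\rho(\pi_1(S))=M^\K$, and Theorem~\ref{thm:topology of bundle} pins down the topology.

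For the reduction, recall from Section~\ref{subsec:dod} that the topology of $M^\K$ is locally constant as $\rho$ varies in a connected component of $\sP$-Anosov representations. The Hitchin component $\Hit(S,2n)$ and the quasi-Hitchin component in $\sPGL(2n,\C)$ are connected, are contained in the $\sP$-Anosov locus (Hitchin and quasi-Hitchin representations are Anosov with respect to every parabolic), and contain the Fuchsian locus; hence it suffices to treat one Fuchsian $\rho$, which for $\K=\C$ we regard inside the quasi-Hitchin component via the real Hitchin component. This is exactly the representation for which $U^\K$ was built in Section~\ref{subsec:projective structures Higgs} from the uniformizing Higgs bundle.

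For this Fuchsian $\rho$ I would proceed in three steps. First, compute the bad set $K^\K_\rho=\bigcup_{t\in\partial_\infty\pi_1(S)}[\xi(t)]$ explicitly: since $\rho$ is $\Symm^{2n-1}$ of a Fuchsian $\sPSL(2,\R)$-representation, the boundary map $\xi$ is, in the frame adapted to the Higgs bundle, the Frenet lift (osculating $n$-dimensional subspace) of a rational normal curve in $\KP^{2n-1}$, so that $K^\K_\rho$ is an explicit real-algebraic subvariety and $\Omega^\K_\rho=\KP^{2n-1}\setminus K^\K_\rho$. Second, using the explicit flat connection of Section~\ref{subsec:projective structures Higgs} — with $H$ the diagonal hyperbolic-metric solution of Hitchin's equations — write down the developing map $D\colon\widetilde{U^\K}\to\KP^{2n-1}$ of the transverse subbundle and show that $D(\widetilde{U^\K})=\Omega^\K_\rho$: the fibrewise quadratic equation $t_1\bar t_2+t_3\bar t_4+\cdots=0$ defining $U^\K$ is designed precisely so that, after parallel transport, a point of $\KP^{2n-1}$ lies in the image of $D$ if and only if it avoids every osculating plane $[\xi(t)]$. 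Third, note that $D$ is a local diffeomorphism from a manifold covering the compact $U^\K$ onto $\KP^{2n-1}$, that $\rho(\pi_1(S))$ acts properly discontinuously and cocompactly on $\Omega^\K_\rho$, and that the fibre $F^\K$ is connected; a short diagram chase then shows that $D$ descends to a diffeomorphism $U^\K\cong\Omega^\K_\rho/\rho(\pi_1(S))=M^\K$.

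The main obstacle is the exact identity $D(\widetilde{U^\K})=\Omega^\K_\rho$ of the second step. After clearing the metric factors $h_i$ and passing to real fibre coordinates (and the angle along the circle fibre), this becomes a system of real polynomial and trigonometric inequalities in the fibre variables and the point of $\bH^2$, expressing simultaneously that the developing image omits all of $K^\K_\rho$ and omits nothing else. These estimates are delicate and hold precisely in the range $n\le 63$; this is the only place where the numerical hypothesis enters, and carrying them out is the computational heart of the argument. (If one is content with a diffeomorphism rather than an $(X,\sG)$-isomorphism, one may instead exhibit both $M^\K$ and $U^\K$ as $F^\K$-bundles over $S$ and compare their classifying data, but the range in which this comparison is unambiguous is controlled by the same bound.)
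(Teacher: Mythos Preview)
Your overall strategy matches the paper's: reduce to a single Fuchsian $\rho$ by connectedness of the (quasi-)Hitchin locus, compute the developing map of the $\KP^{2n-1}$-structure on $U^\K$ explicitly from the uniformizing Higgs bundle, and identify its image with $\Omega^\K_\rho$. Two points deserve correction.

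First, you have misread the role of the bound $n\le 63$. The inequalities do \emph{not} ``hold precisely in the range $n\le 63$''; the authors believe the result is true for all $n$, and $63$ is simply where the computer verification was stopped. The paper's crucial reduction, which you do not reach, is that the identification $U^\K\cong\Omega^\K_\rho/\rho(\pi_1(S))$ is equivalent to the positive definiteness of a single explicit $n\times n$ matrix depending only on $n$; this is then checked numerically for each $n\le 63$. Your description of the computational core as ``a system of real polynomial and trigonometric inequalities in the fibre variables and the point of $\bH^2$'' is too diffuse: without the reduction to a finite-dimensional matrix condition there is no clear algorithm to run, and your alternative (comparing classifying data of $F^\K$-bundles) does not obviously lead to the same bound.

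Second, your third step is not a ``short diagram chase''. Knowing that $D$ is an equivariant local diffeomorphism with image $\Omega^\K_\rho$ and that both quotients are compact does not by itself force $D$ to be a diffeomorphism: one must show $D$ is a covering and then that its degree is one. In practice this is folded into the same explicit computation as the second step (once the developing map is written down concretely one can check injectivity on fibres directly), but it should not be presented as automatic.
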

\begin{proof}
For a representation $\rho$ in the Fuchsian locus, we constructed a $\KP^{2n-1}$-structure on the manifold $U^\K$. We can explicitly compute the developing map of this structure, and we can prove that this structure is isomorphic to the structure $\Omega^\K_\rho/\rho(\pi_1(S))$ if and only if a certain explicit $n \times n$ matrix is positive definite. We then used the computer to check whether the matrix is actually positive definite. Unfortunately, since with the computer we could only check finitely many values of $n$, we stopped after $n=63$.
\end{proof}

We believe the result to be true for every value of $n$, but we don't have a general proof yet. Together with Theorem \ref{thm:topology of bundle}, this result completely describes the topology of $M^\K$. Moreover, this result describes some interesting properties of the geometry of the projective structure on $\Omega^\K_\rho/\rho(\pi_1(S))$ when $\rho$ is close enough to the Fuchsian locus. In particular, Theorem \ref{thm:dod} proves that, for $n \leq 63$, the manifold $M^\C$, is diffeomorphic to a fiber bundle over the surface: this result proves Conjecture \ref{conj:dumas sanders} by Dumas and Sanders in this special case.

The topology of the manifold $M^\R$ was also studied by Guichard and Wienhard (announced in \cite[Remark 11.4(ii)]{GWDomainsofDiscont}). They also saw that it is diffeomorphic to a fiber bundle over the surface with fiber $F^\R$.

More recent work about Conjecture \ref{conj:dumas sanders} uses different methods, which don't involve Higgs bundles. In a joint work with Qiongling Li \cite{ProjectionsHyperbolicPlane}, we proved the following theorem.

\begin{Theorem}[Alessandrini--Li \cite{ProjectionsHyperbolicPlane}]Let $\Omega$ be the domain of discontinuity described in~{\rm \cite{GWDomainsofDiscont}} of a quasi-Hitchin representation $\rho$, where:
\begin{enumerate}\itemsep=0pt
\item[$1)$] $\rho\colon \pi_1(S) \ra \sPGL(2n,\C)$, and $\Omega \subset \CP^{2n-1}$, or
\item[$2)$] $\rho\colon \pi_1(S) \ra \sPGL(n,\C)$, and $\Omega \subset \mathcal{F}_{1,n-1}$, the partial flag manifold parametrizing flags made of lines and hyperplanes.
\end{enumerate}
Then, for every $n$, the quotient $M = \Omega/\pi_1(S)$ is homeomorphic to the total space of a continuous fiber bundle over $S$.
\end{Theorem}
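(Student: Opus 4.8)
The plan is to reduce the statement to a fibration criterion for the domain of discontinuity by analysing how the equivariant boundary map $\xi$ behaves along the fibers of the natural projection $\partial_\infty \pi_1(S) \to S$. Since $\rho$ is quasi-Hitchin, it is in particular $\sB$-Anosov (hence $\sP$-Anosov for the relevant parabolic in each case), so we have a continuous $\rho$-equivariant map $\xi$ from $\partial_\infty\pi_1(S) \simeq S^1$ into $\sG/\sP$; the first step is to recall that this map is a \emph{hyperconvex Frenet curve} in the Hitchin case and, by continuity and openness of the Anosov condition, remains sufficiently generic (transverse, with controlled osculating behaviour) on the whole quasi-Hitchin component containing it. The key point I would exploit is that the complement $K_\rho = \bigcup_{t} [\xi(t)]$ of $\Omega$ is a union of projective subspaces (lines and hyperplanes in case (2), $(n-1)$-planes in case (1)) indexed continuously by $S^1$, and that the genericity of $\xi$ forces these subspaces to be in sufficiently general position.

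The core of the argument is to build, directly on $\Omega$, a continuous map to $S$ realising it as a fiber bundle. Concretely, I would first lift the problem to $\widetilde\Omega \subset \KP^{N}$ (or $\mathcal{F}_{1,n-1}$) with its $\pi_1(S)$-action, and construct a $\pi_1(S)$-equivariant continuous map $\widetilde\Omega \to \bH^2 \simeq \widetilde S$. The natural candidate: for a point $x \in \widetilde\Omega$, use the disjointness of $x$ from every $[\xi(t)]$ together with a ``nearest-point'' or ``linking'' construction against the curve $t \mapsto \xi(t)$ to single out a well-defined point of the circle's convex hull, i.e.\ of $\bH^2$. To make this rigorous one sets up a function $(x,t)\mapsto \delta(x,\xi(t))$ measuring the ``position'' of $x$ relative to the osculating flag at $t$ (in the real Hitchin case this is literally a determinant / cross-ratio expression built from the Frenet frame), shows it is continuous and nowhere vanishing on $\widetilde\Omega \times S^1$, and then integrates or takes a barycenter of the induced probability-measure-valued map $x \mapsto (\text{normalised } \delta(x,\cdot)\,dt)$ on $S^1 = \partial\bH^2$ via the conformal barycenter of Douady--Earle. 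Equivariance is automatic because $\delta$ is built $\sG$-equivariantly from $\xi$, and the barycenter map is equivariant for $\mathrm{PSL}(2,\R) \subset \mathrm{Homeo}(S^1)$. Passing to the quotient gives the continuous map $M = \Omega/\pi_1(S) \to S$. That this map is a fiber bundle then follows because it is a proper submersion-like map between manifolds whose fibers are all homeomorphic — the fiber over $[z]\in S$ being the intersection of $\widetilde\Omega$ with the set of $x$ whose barycenter is $z$, which by the explicit description is an open dense subset of a fixed projective space with finitely many (continuously varying) subspaces removed, hence of constant homeomorphism type — and one invokes an Ehresmann-type local triviality argument (or directly exhibits local sections of $\partial\bH^2\to$ fiber and uses the $S^1$-family structure).

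The hard part, I expect, is precisely establishing that the candidate map $\widetilde\Omega \to \bH^2$ is \emph{well-defined and continuous}, i.e.\ that the function $\delta(x,\xi(t))$ really is nowhere zero on all of $\widetilde\Omega \times S^1$ and that the resulting measure on $S^1$ is non-atomic (so the Douady--Earle barycenter is defined). In the real Hitchin case this rests on the hyperconvexity of the Frenet curve and a careful sign/transversality analysis of the Grassmannian position of $x$ against the full osculating flag; in the quasi-Hitchin complex case one does not have hyperconvexity, so the genericity must be extracted from the Anosov property alone plus openness/connectedness of the quasi-Hitchin component, which is more delicate and is presumably where the bound ``for every $n$'' (as opposed to the $n \le 63$ of Theorem \ref{thm:dod}) is bought at the price of giving up the identification with the explicit model $U^\K$. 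A secondary technical point is verifying that all fibers are genuinely homeomorphic and that the bundle is locally trivial despite $M$ and $S$ being only topological manifolds a priori — here one uses that $\Omega$ is an open subset of a smooth manifold and that the cocompact proper action gives enough regularity to run a microbundle/Ehresmann argument, or alternatively one appeals to the fact that a proper continuous map all of whose point-preimages are homeomorphic to a fixed space and which admits local ``trivialising'' families (from the $S^1$-parametrisation of $K_\rho$) is a fiber bundle.
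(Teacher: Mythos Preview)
The paper does not actually prove this theorem: it is a survey, and the result is quoted from the in-preparation reference \cite{ProjectionsHyperbolicPlane} (``Projections from flag manifolds to the hyperbolic plane''), with only the side remarks that the method ``uses different methods, which don't involve Higgs bundles'' and ``has broader scope'' than Theorem~\ref{thm:dod} but ``does not allow us to completely understand the topology of the fiber''. So there is no proof in the paper to compare against; at most one can say that your overall architecture --- construct a $\rho$-equivariant continuous map $\widetilde\Omega \to \bH^2$, descend to a map $M \to S$, and then argue local triviality --- is consistent with the title of the cited work and with those remarks.

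That said, as a standalone proposal there are real gaps. Your opening line invokes ``the natural projection $\partial_\infty \pi_1(S) \to S$''; no such map exists (the Gromov boundary is a circle, not fibered over $S$), so whatever you meant needs to be reformulated. More seriously, the core step --- the function $\delta(x,\xi(t))$ and the Douady--Earle barycenter --- is left unspecified exactly where it matters. In case~(1) the bad set through $t$ is an $(n{-}1)$-plane in $\CP^{2n-1}$, and there is no canonical real-valued ``distance'' or ``sign'' from a point to a complex linear subspace that would feed into a barycenter; in case~(2) the bad set is a point-hyperplane flag and the situation is similar. You acknowledge that in the complex quasi-Hitchin case there is no hyperconvexity, but then ``extract genericity from the Anosov property alone plus openness/connectedness'' is an aspiration, not an argument: Anosov gives pairwise transversality of $\xi(t)$ and $\xi(t')$, which says nothing about how an arbitrary $x\in\Omega$ sits relative to the whole family $\{\xi(t)\}$. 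Finally, the local-triviality step is hand-waved: your map is only continuous, so Ehresmann does not apply, and ``all fibers are homeomorphic because each is a projective space minus some continuously varying subspaces'' is not obviously true in the complex case (the combinatorics of how the removed subspaces intersect can vary), nor does constancy of fiber type by itself yield a fiber bundle. These are the points a genuine proof must address, and your proposal does not yet do so.
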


This theorem proves Conjecture~\ref{conj:dumas sanders}, for infinitely many cases. The method we use for the proof has broader scope than the method we used to prove Theorem~\ref{thm:dod}, but it does not allow us to completely understand the topology of the fiber, nor it gives information about the geometric structures.

In a joint work with Maloni and Wienhard \cite{LagrangianGrassmannian}, we proved Conjecture \ref{conj:dumas sanders} in another case:
\begin{Theorem}[Alessandrini--Maloni--Wienhard]Let $\rho\colon \pi_1(S) \ra \sPSp(4,\C)$ be a quasi-Hitchin representation, and let $\Omega$ be the domain of discontinuity described in~{\rm \cite{GWDomainsofDiscont}} of $\rho$ in $\mathrm{Lag}\big(\C^4\big)$, the Lagrangian Grassmannian of~$\C^4$. Then the quotient $M = \Omega/\pi_1(S)$ is homeomorphic to the total space of a continuous fiber bundle over~$S$.
\end{Theorem}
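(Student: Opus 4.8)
The plan is to reduce the statement to one explicit representation, where the fibration can be exhibited by hand. A quasi-Hitchin representation is $\sB$-Anosov, hence $\sP$-Anosov for $\sP$ the stabilizer of a Lagrangian plane in $\C^{4}$, so $\mathrm{Lag}\big(\C^{4}\big)=\sPSp(4,\C)/\sP$ and $\Omega=\Omega_{\rho}$ is the Guichard--Wienhard domain in $\sG/\sP$. Invoking the local constancy of the topology of $\Omega_{\rho}/\rho(\pi_{1}(S))$ as $\rho$ varies in a connected component of $\sP$-Anosov representations (the analogue for $\sPSp(4,\C)$ of the statement recalled in Section~\ref{subsec:dod}), and using that the quasi-Hitchin component is connected and contains the Hitchin component of the split real form $\sSp(4,\R)$, which is a cell containing the Fuchsian locus, it suffices to prove the theorem for the base Fuchsian representation $\rho_{0}=\iota\circ j$, with $j\colon \pi_{1}(S)\ra\sPSL(2,\R)$ Fuchsian and $\iota\colon \sPSL(2,\R)\ra\sPSp(4,\R)\subset\sPSp(4,\C)$ the principal embedding coming from $\C^{4}\cong\Symm^{3}\big(\C^{2}\big)$.

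For $\rho_{0}$ the aim is a $\sPSL(2,\R)$-equivariant continuous map $\pi_{0}\colon \Omega_{\rho_{0}}\ra\bH^{2}$, equivariant for the $\iota$-action on $\mathrm{Lag}\big(\C^{4}\big)$ and the isometric action on $\bH^{2}=\sPSL(2,\R)/\sSO(2)$. Granting such a map, the evaluation $[g,v]\mapsto\iota(g)\cdot v$ identifies $\Omega_{\rho_{0}}$ with the associated bundle $\sPSL(2,\R)\times_{\sSO(2)}F_{0}$, where $F_{0}=\pi_{0}^{-1}(o)$ is the fiber over a base point $o$, carrying its $\sSO(2)$-action; this is a locally trivial fiber bundle over $\bH^{2}$. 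Quotienting by $j(\pi_{1}(S))$ then presents $M=\Omega_{\rho_{0}}/\rho_{0}(\pi_{1}(S))$ as a fiber bundle over $\bH^{2}/j(\pi_{1}(S))=S$ with fiber $F_{0}$, which is exactly the assertion (and, as in the statement, leaves the fiber unidentified).

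It remains to construct $\pi_{0}$, and for this one must understand $\Omega_{\rho_{0}}$. The limit curve $\xi_{0}\colon \partial_{\infty}\pi_{1}(S)=\partial\bH^{2}=\bP^{1}(\R)\ra\mathrm{Lag}\big(\C^{4}\big)$ is the $\iota$-equivariant osculating-Lagrangian curve of the rational normal curve, and $K_{\rho_{0}}=\mathrm{Lag}\big(\C^{4}\big)\setminus\Omega_{\rho_{0}}$ is the $\sPSL(2,\R)$-invariant Schubert-type subset cut out of $\xi_{0}$ by the recipe of~\cite{GWDomainsofDiscont}. I would attach to each $L\in\Omega_{\rho_{0}}$ a positive function $t\mapsto\varphi_{L}(t)$ on $\partial\bH^{2}$ measuring the position of $L$ relative to $\xi_{0}(t)$ (a cross-ratio-type invariant, bounded below and blowing up exactly as $L$ approaches $K_{\rho_{0}}$) and define $\pi_{0}(L)$ to be the minimizer of the strictly convex, $\sPSL(2,\R)$-equivariant functional $x\mapsto\int_{\partial\bH^{2}}\varphi_{L}(t)\,d\mu_{x}(t)$, $\mu_{x}$ the visual measure at $x$. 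Equivariance and continuity are then formal; one then checks that $\pi_{0}$ is a submersion, equivalently that the stabilizer of $L$ in $\sPSL(2,\R)$ is the isotropy $\sSO(2)$ of $\pi_{0}(L)$, which promotes the canonical bijection $\sPSL(2,\R)\times_{\sSO(2)}F_{0}\to\Omega_{\rho_{0}}$ to a diffeomorphism, via $\sSL(2)$-representation theory describing the $\sPSL(2,\R)$-orbit stratification of the real $6$-manifold $\mathrm{Lag}\big(\C^{4}\big)$.

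I expect the heart of the proof to be this analysis of $\Omega_{\rho_{0}}$: pinning down $K_{\rho_{0}}$ precisely, showing that its complement consists of a single orbit type fibering over $\bH^{2}$ with no degeneration of the fiber, controlling the geometry of the possibly singular set $K_{\rho_{0}}$ near $\xi_{0}(\partial\bH^{2})$, and producing a functional $\varphi_{L}$ (equivalently an equivariant retraction of $\Omega_{\rho_{0}}$ onto the closed $\sPSL(2,\R)$-orbit) with the required convexity and properness. Once these inputs are in place, the reduction to the Fuchsian locus and the associated-bundle bookkeeping are routine. An alternative that avoids the reduction is to build a $\rho$-equivariant continuous map $\Omega\ra\bH^{2}$ directly for arbitrary quasi-Hitchin $\rho$, from the Anosov limit curve $\xi$ and an equivariant barycenter on $\bH^{2}$ of $\partial_{\infty}\pi_{1}(S)$-indexed data, proving local triviality without an ambient symmetry; this is heavier but works uniformly on the whole component.
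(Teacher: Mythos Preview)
The paper does not actually prove this theorem: it is only announced here, with the argument deferred to the separate reference~\cite{LagrangianGrassmannian} (listed as ``in preparation''). All the surrounding text offers is that the method ``does not involve Higgs bundles'' and that the cited work ``continues giving an explicit description of the fiber''. So there is no proof in the present paper against which your proposal can be checked line by line.

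That said, your reduction step---passing from an arbitrary quasi-Hitchin $\rho$ to the Fuchsian $\rho_{0}$ by local constancy of the topology of $\Omega_{\rho}/\rho(\pi_{1}(S))$ along the connected Anosov locus---is exactly the manoeuvre the paper uses in the neighbouring results (e.g., Theorem~\ref{thm:dod}), and is certainly the right first move. The idea of producing a $\sPSL(2,\R)$-equivariant map $\pi_{0}\colon\Omega_{\rho_{0}}\to\bH^{2}$ and then reading off the associated-bundle description is also sound in principle. The hint that the actual proof computes the fibre explicitly suggests, however, that the authors work with the concrete $\sPSL(2,\R)$-geometry of $\mathrm{Lag}(\C^{4})$ (orbit types, incidence with the Veronese flag) rather than with an abstract barycentre functional as you propose; your route, if it can be made to work, would trade that explicit identification of the fibre for a softer existence argument.

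One genuine slip to flag: your clause ``one then checks that $\pi_{0}$ is a submersion, equivalently that the stabilizer of $L$ in $\sPSL(2,\R)$ is the isotropy $\sSO(2)$ of $\pi_{0}(L)$'' is not correct. A generic $L\in\mathrm{Lag}(\C^{4})$ has trivial (or at most finite) stabiliser in $\sPSL(2,\R)$, since $\dim_{\R}\sPSL(2,\R)=3<6=\dim_{\R}\mathrm{Lag}(\C^{4})$; the $\sPSL(2,\R)$-action on $\Omega_{\rho_{0}}$ is certainly not transitive on fibres. Fortunately this ``equivalence'' is also unnecessary: once you have a continuous equivariant $\pi_{0}$, the map $\sPSL(2,\R)\times_{\sSO(2)}F_{0}\to\Omega_{\rho_{0}}$, $[g,v]\mapsto\iota(g)\cdot v$, is already a continuous bijection over $\bH^{2}$ by equivariance alone, and invariance of domain (or checking that $\pi_{0}$ is a submersion so that $F_{0}$ is a manifold of the right dimension) upgrades it to a homeomorphism. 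So drop the stabiliser claim and keep the rest. The substantive gaps you yourself identify---constructing a $\varphi_{L}$ with the needed convexity/properness and controlling $K_{\rho_{0}}$---remain the real work, and your sketch does not yet supply them.
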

The work continues giving an explicit description of the fiber. Conjecture~\ref{conj:dumas sanders} is still open in general, and it is the subject of active research.

\subsection*{Acknowledgements}

I am grateful to Qiongling Li for the collaboration that brought many of the results surveyed here, to Steve Bradlow, Brian Collier, John Loftin and Anna Wienhard for interesting discussions about this topic and to the anonymous referees for their useful comments on the first draft of the paper. The mini-course was funded by the UIC NSF RTG grant DMS-1246844, L.P.~Schaposnik's UIC Start up fund, and NSF DMS 1107452, 1107263, 1107367 ``RNMS: GEometric structures And Representation varieties'' (the GEAR Network).

\pdfbookmark[1]{References}{ref}
\LastPageEnding

\end{document}